\DeclareMathAlphabet{\mathpzc}{OT1}{pzc}{m}{it}
\newtheorem{thm}{Theorem}[section]
\newtheorem{rmk}[thm]{Remark}
\newtheorem{prop}[thm]{Proposition}
\newtheorem{lem}[thm]{Lemma}
\newtheorem{coro}[thm]{Corollary}
\numberwithin{equation}{section}
\def \ll { \left\lbrace }
\def \rr { \right\rbrace }
\def\centerarc[#1](#2)(#3:#4:#5)
\DeclareMathOperator{\dP}{\mathds{P}}
\DeclareMathOperator{\N}{\mathbb{N}}
\DeclareMathOperator{\R}{\mathbb{R}}
\DeclareMathOperator{\cR}{\mathcal{R}}
\DeclareMathOperator{\cB}{\mathcal{B}}
\DeclareMathOperator{\D}{\mathbb{D}}
\newcommand{\pd}[2]{\frac{\partial #1}{\partial #2}}
\newcommand{\dersup}[3]{\frac{d^{#3} #1}{d #2^{#3}}}
\newcommand{\lb}{\left (}
\newcommand{\rb}{\right )}
\newcommand{\lbb}{\left [}
\newcommand{\rbb}{\right ]}
\newcommand{\labs}{\left |}
\newcommand{\rabs}{\right |}
\newcommand{\lbrb}[1]{\lb #1 \rb}
\newcommand{\lbbrbb}[1]{\lbb#1\rbb}
\newcommand{\lbbrb}[1]{\lbb#1\rb}
\newcommand{\lbrbb}[1]{\lb#1\rbb}
\newcommand{\lbcurly}{\left\{}
\newcommand{\rbcurly}{\right\}}
\newcommand{\abs}[1]{\labs#1\rabs}
\newcommand{\curly}[1]{\lbcurly#1\rbcurly}
\newcommand{\bo}[1]{\mathrm{O}\lbrb{#1}}
\newcommand{\so}[1]{\mathrm{o}\lbrb{#1}}
\newcommand{\Pbb}[1]{\Pb\lb #1\rb}
\newcommand{\Ebb}[1]{\Eb\lbb #1\rbb}
\newcommand{\LL}{L\'{e}vy }
\newcommand{\LLPs}{L\'{e}vy processes }
\newcommand{\Rez}{\Re\lbrb{z}}
\newcommand{\limi}[1]{\lim\limits_{#1\to \infty}}
\newcommand{\limsupi}[1]{\varlimsup\limits_{#1\to \infty}}
\newcommand{\liminfi}[1]{\varliminf\limits_{#1\to \infty}}
\newcommand{\limsupo}[1]{\varlimsup\limits_{#1\to 0}}
\newcommand{\liminfo}[1]{\varliminf\limits_{#1\to 0}}
\newcommand{\Cb}{\mathbb{C}}
\newcommand{\C}{\mathbb{C}}
\newcommand{\Eb}{\mathbb{E}}
\newcommand{\Rb}{\mathbb{R}}
\newcommand{\Pb}{\mathbb{P}}
\renewcommand{\P}{\mathbb{P}}
\newcommand{\Ic}{\mathcal{I}}
\newcommand{\ind}[1]{\mathbb{I}_{\{#1\}}}
\newcommand{\IntOI}{\int_{0}^{\infty}}
\newcommand{\IntII}{\int_{-\infty}^{\infty}}
\renewcommand{\limsupo}[1]{\varlimsup\limits_{#1\to 0+}}
\renewcommand{\liminfo}[1]{\varliminf\limits_{#1\to 0+}}
\newcommand{\minusone}{(-1)}
\definecolor{cadmiumgreen}{rgb}{0.0, 0.42, 0.24}
\newcommand{\ab}{a+ib}
\renewcommand{\Im}{\mathtt{Im}}
\renewcommand{\Re}{\mathtt{Re}}
\newcommand{\red}[1]{{\color{red}#1}}
\newcommand{\fP}[1]{f_{\Phi}\lbrb{#1}}
\renewcommand{\Phi}{\phi}
\renewcommand{\nu}{\mu}
\begin{document}
	
	\title[]{Regularity and asymptotics of densities of inverse subordinators}
	\author[]{Giacomo Ascione$^1$}
	\author[]{Mladen Savov$^{2,3}$}
	\author[]{Bruno Toaldo$^4$}
	\address[]{1: Scuola Superiore Meridionale, Largo S. Marcellino 10 - 80138, Napoli (Italy)}
	\email[]{g.ascione@ssmeridionale.it}
	\address[]{2: Faculty of Mathematics and Informatics, Sofia University ``St Kliment Ohridski'', boul. James Boucher 5, Sofia - 1164, Bulgaria}
	\email[]{msavov@fmi.uni-sofia.bg}
	\address[]{3: Institute of Mathematics and Informatics, Bulgarian Academy of Sciences, str. Akad. Georgi Bonchev, block 8, Sofia - 1113, Bulgaria}
	\email[]{mladensavov@math.bas.bg}
	\address[]{4: Dipartimento di Matematica ``Giuseppe Peano'' - Universit\`{a} degli Studi di Torino, Via Carlo Alberto 10 - 10123, Torino (Italy)}
	\email[]{bruno.toaldo@unito.it}
	\keywords{Inverse subordinators, subordinators,  densities, Bernstein functions, Complete Bernstein functions, Laplace inversion, Saddle point method}
	\date{\today}
	\subjclass[2020]{60G51, 60G22, 60K50}

	\begin{abstract}
		In this article densities (and their derivatives) of subordinators and inverse subordinators are considered. Under minor restrictions, {generally} milder than the existing in the literature, employing a useful modification of the saddle point method, we obtain the large asymptotic behaviour of these densities (and their derivatives) for a specific region of space and time and quantify how the ratio between time and space affects the explicit speed of convergence. The asymptotics is governed by an exponential term depending on the Laplace exponent of the subordinator and the region represents the  behaviour of the subordinator when it is atypically small (the inverse one is larger than usual). As a result a route to the derivation of novel general or particular fine estimates for densities with explicit constants in the speed of convergence in the region of the lower envelope/the law of iterated logarithm is available. Furthermore, under mild conditions, we provide  a power series representation for densities (and their derivatives) of subordinators and inverse subordinators. This representation is explicit and based on the derivatives of the convolution of the tails of the corresponding L\'evy measure, whose smoothness is also investigated. In this context the methods adopted are based on Laplace inversion and strongly rely on the theory of Bernstein functions extended to the cut complex plane. As a result, smoothness properties of densities (and their derivatives) and their behaviour near zero immediately follow.
	\end{abstract}
	
	\maketitle

	\tableofcontents
	
\section{Introduction}\label{sec:intro}
Subordinators are L\'evy processes, i.e. c\'adl\'ag stochastic processes with stationary and independent increments, whose paths are almost surely non-decreasing. For this reason, they constitute a special class of \LL processes of fundamental interest in probability theory. In this paper, wherever possible, we consider potentially killed subordinators, that is to say that for any proper subordinator $\sigma:={\lbrb{\sigma(x)}_{x\geq 0}}$ we allow for killing at an exponentially distributed random variable $\mathbf{e}_q, q\geq0$ ($\mathbf{e}_0=\infty$) that is independent of $\sigma$,  by setting {$\sigma(x)=\infty$, provided $x \ge  \mathbf{e}_q$,} and keeping the original process until $\mathbf{e}_q$. With each potentially killed subordinator one defines the (right-)inverse process through the passage times $L:=\lbrb{L(t)}_{t \geq  0}$. In this work, our main aim is to offer a detailed study of the densities of $L$. In more detail, under mild and natural conditions, we prove that such densities are smooth in both variables and we provide for them and their derivatives a series representation and precise non-classical Tauberian asymptotics (with explicit speed of convergence). The latter is obtained by means of Laplace inversion, as, indeed, the Laplace transform of the density of $L(t)$ with respect to the variable $t$ admits a simple form in terms of the Laplace exponent of the subordinator $\sigma$. Furthermore, we also extend the results to densities (and their derivatives) of the subordinators themselves. Our results expand uniformly the existing knowledge on these quantities and allow for specific applications. We shall discuss the literature, offer motivation for our study  and outline our methodology below.

We first mention that, in recent years, the interest in subordinators and their hitting-times has grown fast and has reached an increasingly larger audience also outside the probability community, involving many areas of mathematics. This is due, in particular, to the connection with the so-called anomalous processes (or anomalous diffusion) and semi-Markov processes. Indeed, if we take a Markov process $X=\lb X(t) \rb_{ t \geq 0 }$ and consider the time-changed process $Y=\lb Y(t)\rb_{  t \ge 0}$, with $Y(t):=X(L(t))$, for any $t \ge 0$, then the process $Y$ has intervals of constancy, induced by the time-change, that are distributed according to the jumps of $\sigma$ (which are not necessarily exponentials). Under suitable assumptions, these time-changed processes are prototypes of semi-Markov processes (e.g., \cite{cinlarsemi, kaspi, meerstra, savtoa}). The importance of these processes arise in several applications in very different fields, among others: they are scaling limit of continuous time random walks (e.g., \cite{baemstra, meercoupled, meertri, meerstra}), they are useful to model anomalous diffusion and fractional kinetics (e.g., \cite{moving, beghin, gianni2, hairer, koko, kochukondra, FCbook, Metzler, gianni, cimp, savtoa, silva, umarov}), they appear in economics and mathematical finance (e.g., \citep{jacquier, scalas, scalastoaldo, lorenzo1}) and recently also in neuronal modelling (e.g., \cite{annals2020,LIF}).
Furthermore, the relation between these processes and solutions of some time-nonlocal equations is now well-established (see \cite{fracCauchy,zqc} for the most modern recent theory, \cite{FCbook} for a review of applications and \cite{gorevess} for analysis of fractional-type equations).
It is clear that in this context the one-dimensional distributions of the random variables $L(t)$ play a central role. For instance, if we assume that $X$ admits as a state space $\R^d$, for some $d \ge 1$, and we denote by $p(x,y,t)$ the transition densities of $X$, then one can show that
\begin{align*}
\cB(\R^d) \ni E  \mapsto \dP \lb Y(t) \in E \mid Y(0)=x \rb \, = \, \int_{E} \int_0^\infty p(x,y,s)f(s;t)ds  \, dy,
\end{align*}
where, for any $t>0$, $f(s;t)$ is the density of $L(t)$ and $\cB(\R^d)$ is the Borel $\sigma$-algebra of $\R^d$. In practice, the time-changed process admits a density that can be written in terms of the one-dimensional distributions of $L(t)$. Hence, in order to determine some features of the one-dimensional distribution of $Y$, it may be necessary to rely also on some specific properties of the ones of $L(t)$ itself. Therefore, a more detailed study of the main features of the densities of inverse subordinators is needed. This point of view inspired several other works on this topic (e.g., \cite{fausto, kovacmeer, kumar, meerstra2, taqqu}). {The asymptotic behaviour of the density of subordinators and inverse subordinators already revealed to be a strong tool, for example, to study solutions of time-nonlocal equations. Two-sided estimates for densities of subordinators and inverse subordinators and their derivatives have been obtained, by different means, for instance in \cite{CKKW18,CKKW20} under restrictive scaling condition, where they have been employed to provide a two-sided bound for the fundamental solution of a time-nonlocal Poisson equation.} Similarly, small-time asymptotics of the derivative of the density of inverse subordinators has been employed in \cite{moving} to derive the regularity of the fundamental solution of a time-nonlocal heat equation with a moving boundary. Furthermore, asymptotics of the distributions of subordinators and inverse subordinators have been used in \cite{KP22,KP23} to deduce the spectral heat content of some time-changed processes, that play a role in the stochastic representation of the solution of time-nonlocal partial differential equations.

Two of the key objectives in the study of the one-dimensional distributions of random processes are to understand the asymptotic behaviour in time and space of their tails and to find representations for their densities and those of related quantities, e.g. such as passage times. This information is usually in the form of  explicit asymptotic terms at zero and/or at infinity, series expansions, integral representations, etc. In this work we provide results for densities (and their derivatives) of inverse subordinators and, via interchangeability, of subordinators. These include the derivation of precise, universal and explicit form of the large asymptotic behaviour with speed of convergence and general series expansions. The results go well beyond the current state-of-art which we briefly review below.

On the large asymptotics the first papers, see  \cite{FriPruitt71,JainPr_87}, offered results for lower tails of subordinators. They have been subsequently refined for densities  with the most up-to-date results contained in \cite{DonRiv,GrLTr21}. General upper and lower bounds with conditions in the spirit of \cite{DonRiv} are derived in \cite{GrLTr21}. Particular cases of estimates on densities for a class of subordinators can be found in the recent work \cite{ChoKim21}. Asymptotic results that are similar to ours (without speed of convergence) are contained in \cite{PatVai22} which deals with densities (and their derivatives) of spectrally negative \LLPs with necessarily positive Brownian component. {Overall, at asymptotic level, our results in Theorem \ref{thm:mainS} (i) relax the assumptions of the general \cite[Theorem 3.3]{GrLTr21} and \cite[Theorem 3.2 (iii)]{DonRiv}. In the setting of Theorem \ref{thm:mainS} (iii) our general condition is of different nature than those in \cite[Theorem 3.3]{GrLTr21} and \cite[Theorem 3.2 (iii)]{DonRiv}, whereas in the case of Theorem \ref{thm:mainS} (ii)  our condition is slightly more restrictive but easier to verify than \cite[Theorem 3.2 (iii)]{DonRiv}. However, in all cases we provide uniform results with explicit speed of convergence that deal not only with densities but with all their derivatives too. Besides, all results for densities of subordinators have their counterparts for densities of inverse subordinators. Detailed discussion is given in Section \ref{subsubsec:EX}. To derive our large asymptotic results we use the saddle point method, as applied in \cite{MinSav23+}, with a modification which allows us to measure the speed of convergence. We must emphasize that the very precise general asymptotic results with speed of convergence coming from an application of the classical saddle point method, see \cite{Olver68,Olver70}, are not directly applicable in our setting for two reasons: first the integrals we study do not have separation of variables as $z$ and $t$ in \cite[(1.0.1)]{Olver70} and second the contour we integrate on varies, whereas it is fixed in \cite[Condition (ii)]{Olver70}. Also, the steepest descent as described in \cite{Olver70} requires very precise knowledge on locations of real values of differences of Bernstein functions. This is usually unavailable. Continuing the discussion of our results note that the obtained speed of convergence depends on the} ratio of time and space, i.e. $t/x$, in $\Pbb{L(t)\in dx; \sigma(L(t))>t}=f(x,t)dx$, as it ranges between the drift of $\sigma$ and $\Ebb{\sigma(1)}$, and the closer $t/x$ is to the drift the faster the speed of convergence is. As a result, under conditions generally milder than the existing in the literature, we present in Theorems \ref{thm:mainL}, \ref{thm:main1}, \ref{thm:main2}, \ref{thm:mainS} explicit expressions for the aforementioned densities (and their derivatives) which are dominated by rather explicit exponential terms stemming from the Laplace exponent of $\sigma$. It is important to highlight once again that these representations are valid for  $t/x<\Ebb{\sigma(1)}$ and therefore capture non-typical slow growth of $\sigma$ or equivalently fast growth of $L$. For example, when the subordinator has a finite second moment, our results capture the region below that of the central limit theorem, see Section \ref{sec:ex} for more details. If $\Ebb{\sigma(1)}=\infty$ then our estimates capture the region of the lower envelope of subordinators and therefore one can obtain more precise local estimates for densities including explicit constants in the speed of convergence and hence furnish estimates for the probabilities that lead up to the law of the iterated logarithm as in \cite{bertoins,FriPruitt71}, see Section \ref{sec:ex}. One is also able to study particular classes of subordinators for which our main results can be further specialized. Also, given the nature of the saddle point method, we can derive under milder assumptions concrete bounds for fixed times as in \cite{ChoKim21,GrLTr21}. These are directions for further investigations. 

On representations of densities with series expansions the literature is mainly concerned with {subordinators and, in general, L\'evy processes}. Series representations for stable laws can be found, e.g., in \cite[Chapter XVII.6]{fellerbook}. In the more specific case of the stable subordinator, a series representation for its density has been provided in \cite{PG10}. It is clear that such a representation can be extended to the density of the inverse stable subordinator by means of the relation that links the two quantities, as for instance highlighted in \cite[Corollary 3.1]{meerschlim}. A similar result has been proved for the inverse tempered stable subordinator in \cite{kumar}, while a further integral representation for the density of the inverse gamma subordinator has been provided in \cite{kumar2} and further improved in \cite{fausto}.  To the best of our knowledge, these very last contributions are the only ones, that provide series representation for the density of (very specific) inverse subordinators. We remark that in both cases the results have been widely used, especially in applications regarding anomalous diffusion (see, e.g., \cite{metzlerbarkai} and references therein) and in the context of governing equations of time-changed processes (see, e.g., \cite{umarov} and references therein). Differently from the case of the inverse subordinators, series expansion and small-time asymptotics of general L\'evy processes have been widely studied. The more recent paper \cite{FH09} provides small-time polynomial expansions of the distribution of a general L\'evy process (later extended to some stochastic volatility models \cite{FH12,FO16}) under some technical assumptions concerning the regularity of its L\'evy measure and density. In \cite{bur}, the authors obtained explicit representations for some subordinators of the Thorin class. Despite in general such explicit representations do not provide power-series expansions, some of them can be still rewritten in this way, as we do in Subsection \ref{subsec:exseries}. In \cite{KK13} the authors provide some small-time bounds and ``bell-like'' estimates for the density of L\'evy processes under a technical condition on the characteristic function. Following the proof, one can observe that the authors employ a preliminary estimate in terms of a compound kernel estimate, that is indeed expressed in terms of a series of convolution powers.  

Here, under mild assumptions on the Laplace exponent of the subordinator, we obtain an explicit power series representation for the density (and its derivatives) of the inverse subordinator, in Theorem \ref{thm:seriespi}, and of the subordinator itself, in Theorem \ref{thm:seriessub}. We highlight that our power series representation holds, in particular, when the Laplace exponent of $\sigma$ is a complete Bernstein function, thus covering also the cases discussed in \cite{kumar,PG10}, for which comparison is carried on in Subsection \ref{subsec:exseries}. The assumption on the completeness of the Laplace exponent is sufficient, but not necessary, as the power-series representation can be applied on a wider class of subordinators, as discussed in Section \ref{discussionassumptions}.  From the point of view of polynomial expansion, we use our series representation to obtain a result for inverse subordinators which is similar to that in \cite{FH09} mentioned above: precisely we provide the asymptotics for small $x$, uniformly for $t$ in compact sets, for the density of inverse subordinators and its derivatives (under quite general and easy-to-check conditions).

As already mentioned, both the asymptotic behaviour and the power-series representation are obtained via Laplace inversion. On the one hand, the assumptions used to derive the asymptotic behaviour first lead to the absolute convergence of the integral involved in the Laplace inversion (see \cite[Theorem $4.1.21$]{abhn}). On the other hand, the assumptions adopted to obtain the power-series representation allow us to consider a suitable keyhole-type contour, on which the inversion integral becomes absolutely convergent. Furthermore, as a consequence, we get the smoothness of the involved densities starting from any of the two sets of assumptions. Finally, let us remark that the proofs of the main results presented in Section \ref{sec:mainResults} are given separately in Sections \ref{sec:proofs} and \ref{subsec:power}, while {Section \ref{sec:ex} offers examples and further discussion for the two sets of results.}
	\section{Preliminaries}\label{sec:prelim}

We assume everywhere that we work on the standard (for subordinators) probability space, namely the space of c\'adl\'ag functions on $\lbbrbb{0,\infty}$ endowed with the Skorohod topology, the sigma-algebra generated by it and a suitable  probability measure on the latter. Also, because our paper focuses on the inverse subordinators we use $t$ for its time variable and $x$ for its space variable. This choice of notation imposes that for the subordinator $t$ is the space variable and $x$ is the time variable, which is unusual, but still in line with the main focus of the paper.

Let {$\sigma=\lbrb{\sigma(x)}_{x \ge 0}$ be a potentially killed one-dimensional subordinator, that is to say that there exists a conservative subordinator $\sigma^{\vartriangle}:=\lbrb{\sigma^{\vartriangle}(x)}_{x \ge 0}$ and an independent exponential random variable $\mathbf{e}_q$ with parameter $q \ge 0$ (where, if $q=0$, we set $\mathbf{e}_0=\infty$) such that $\sigma(x)=\sigma^{\vartriangle}(x)$ for any $x<\mathbf{e}_q$, while $\sigma(x)=\infty$ for any $x \ge \mathbf{e}_q$.}
  Each $\sigma$ is uniquely determined (in law) by a Bernstein function in the following manner
\begin{equation}\label{def:Phi}
	\begin{split}
		-\log\Ebb{e^{-z\sigma(1)}}=\phi(z)&=q+\mathfrak{b}z+\IntOI \lbrb{1-e^{-zy}}\mu_\phi(dy)\\
		&=q+\mathfrak{b}z+z\IntOI e^{-zy}\bar{\mu}_\phi(y)dy,\,\,\Re(z)\geq 0,
	\end{split}
\end{equation}
where $q,\mathfrak{b}\geq 0$, $\mu_\phi$ is a Radon measure on $\lbrb{0,\infty}$ satisfying  $\IntOI \min\curly{y,1}\mu_\phi(dy)<\infty$ (called the L\'evy measure of $\sigma$) and $\bar{\mu}_\phi(t)={\mu_{\phi}\lbrb{\lbrb{t,\infty}}}=\int_{t}^{\infty}\mu(dy),t\geq 0,$ is the tail of $\mu_\phi$.
In fact the right-hand side of \eqref{def:Phi} serves as an equivalent definition of Bernstein functions and thus they are in  bijection with the potentially killed subordinators. Due to their importance in other areas of mathematics, Bernstein functions have been studied in detail. An exposition on the current knowledge of their properties can be found in \cite{librobern} and some additional information is scattered in references such as \cite{BivBernGam,bernsteingamma,Laguerre}. Classical references for \LLPs and subordinators, in particular, are the books \cite{bertoinb,bertoins,kyprianou}. 

With each $\sigma$ one defines the (right-)inverse-subordinator  $L:=\lbrb{L(t)}_{t \ge 0}$ via the passage times
\begin{equation}\label{def:invS}
	L(t):={\inf\curly{x>0:\sigma(x)>t}}.
\end{equation}
Note that $\Pbb{L(t)=\infty}=0$ for every $t \in [0, \infty)$, even if $q>0$, since for $t>\sigma(\mathbf{e}_q-)$ the process $L(t)$ remains stuck in the position $\mathbf{e}_q$.
Also, note that the paths of $L$ are almost surely continuous if and only if $\mathfrak{b}\neq 0$ or $\bar{\mu}_\phi(0)=\infty$, that is $\sigma$ is not a pure-jump compound Poisson process. When $q=\mathfrak{b}=0$ and $\bar{\mu}_\phi(0)=\infty$, it is well-known, see \cite[Theorem 3.1]{meertri}, that, for any $t>0$, $L(t)$ admits density on $(0,\infty)$  and the requirement $q=0$ is immediately seen to be unnecessary. Indeed, it is not hard to check, by a conditioning argument using  the independence between $\mathbf{e}_q$ and $\sigma^{\vartriangle}$ that
\begin{align}
\P \lb L(t) \leq x \rb \, = \, 1-e^{-qx} + e^{-qx} \P \lb L^\Delta(t) \leq x  \rb=1-e^{-qx}\Pbb{\sigma^\vartriangle(x)\leq t},
\label{distr}
\end{align}
where $L^\vartriangle$ stands for the inverse of $\sigma^\vartriangle$. Furthermore, if $q=0$ then  $\sigma^\vartriangle=\sigma$ and $L^\vartriangle=L$, and the measure $\Pbb{L^\vartriangle(t)\in dx, \ \sigma^\vartriangle(L^\vartriangle(t))>t}$ has a density on $(0,t/\mathfrak{b})$, where we set $t/0=\infty$, (see \cite[Lemma 11]{DonRiv2} and \cite[prior to Theorem 2.3]{DonRiv}), given by the relation
\begin{equation}\label{def:ftriangle}
	\begin{split}
		&\Pbb{L^\vartriangle(t)\in dx, \sigma^\vartriangle(L^\vartriangle(t))>t}=\int_{0}^t \bar{\nu}_{\Phi}(t-y)\Pbb{\sigma^\vartriangle(x)\in dy}dx=:f_{\Phi}^\vartriangle (x,t)dx.
	\end{split}
\end{equation}
However, since $\Pbb{\sigma^\vartriangle(L^\vartriangle(t))>t}<1$ if and only if $\mathfrak{b}>0$, see \cite[Propositions 1.7 and 1.9]{bertoins}, we observe that if $\mathfrak{b}=0$ then, for $t>0$, $f_\phi^\vartriangle(\cdot,t)$ is the density of $L^\vartriangle(t)$. Otherwise, if $\mathfrak{b}>0$, $f_\phi^\vartriangle(\cdot,t)$  stands for the density of $L^\vartriangle(t)$ on the event that $\sigma^\vartriangle$ jumps across $t$. On the event that $\sigma^\vartriangle$ creeps up across $t$, i.e. on $\{\sigma^\vartriangle(L^\vartriangle(t))=t\}$, $L^\vartriangle(t)$ does not necessarily admit a density. However, it has been shown in the proof of \cite[Lemma 3.3]{DonRiv}, that if $\sigma^\vartriangle(x)$ admits a density $g_\phi^\vartriangle(x,\cdot)$ on $(\mathfrak{b}x,\infty)$, then, on the event $\{\sigma^\vartriangle(L^\vartriangle(t))=t\}$, $L^\vartriangle (t)$ admits a density in $(0,t/\mathfrak{b})$ given by
%
%
\begin{align}
f_\phi^{c,\vartriangle}(x,t)dx:= \P \lb L^\vartriangle (t) \in dx, \sigma^\vartriangle(L^\vartriangle(t))=t \rb=\mathfrak{b}g_\phi^\vartriangle(x,t)dx.
\label{def:fctriangle}
\end{align}
Plugging \eqref{def:ftriangle} and \eqref{def:fctriangle} into \eqref{distr}, we get on {$[0,t/\mathfrak{b})$}
\begin{align}
\P \lb L(t) \leq x \rb \, = \, 1-e^{-qx} + e^{-qx} \int_0^x \left[ \int_0^t \bar{\nu}_\phi(t-y)g_\phi^\vartriangle(s,y)dy + \mathfrak{b}g_\phi^\vartriangle (s,t) \right] \, ds.
\label{distrexpl}
\end{align}
Furthermore, if $\sigma^\vartriangle(x)$ admits a density $g_\phi^\vartriangle(x,\cdot)$ on $(\mathfrak{b}x,\infty)$, so does $\sigma(x)$ with  density given by
\begin{align}
	g_\phi(x,t)dt \, := \, \P \lb \sigma(x) \in dt \rb \, = \, e^{-qx} g_\phi^\vartriangle (x,t)dt.
	\label{def:gphi}
\end{align}
Differentiating \eqref{distrexpl} in $x$, we get that $L(t)$ admits a density on $(0,t/\mathfrak{b})$, that is given by
\begin{align}
	\P \lb L(t) \in dx \rb \, = \, &q \P \lb \sigma(x) \leq t \rb dx &=:f_\phi^{\rm k}(x,t)dx \label{def:fk} \\
	&+ \left(\int_0^t \bar{\nu}_\phi(t-y)g_\Phi(x,y)dy\right) dx &=:f_\phi(x,t)dx \label{def:f}  \\
	&+\mathfrak{b} g_\phi(x,t)dx &=:f_\phi^{\rm c}(x,t)dx,\label{def:fc}
\end{align}
where we have used \eqref{distr} to identify the double integral in \eqref{distrexpl}.
The functions $f_\phi$, $f_\phi^{\rm k}$ and $f_\phi^{\rm c}$ are defined on the set
\begin{equation}\label{def:Reg}
	\begin{split}
		&\mathbb{D}:=\curly{\lbrb{t,x}\in\Rb^2:\,t>0, 0<x<\frac{t}{\mathfrak{b}}}
	\end{split}
\end{equation}
and then extended to $0$ outside $\mathbb{D}$.

Our main results focus on the quantities in \eqref{def:gphi} and \eqref{def:f}, but the extension of such properties to \eqref{def:fk} and \eqref{def:fc} will be clear.

Despite the fact that the formulations of $f_\phi^{\rm k}$, $f_\phi$ and $f_\phi^{\rm c}$ in \eqref{def:fk}, \eqref{def:f} and \eqref{def:fc} are quite implicit, their Laplace transforms in the variable $t$ can be expressed in terms of $\Phi$ in a simple way. Indeed, for any $z \in \C$ with $\Re(z)>0$, we have
\begin{equation}\label{eq:LT0}
		\int_{0}^\infty f^{\rm k}_\Phi(x,t)e^{-z t}dt=\frac{q}{z}e^{-x\Phi(z)}, \qquad \qquad \int_{0}^\infty f^{\rm c}_\Phi(x,t)e^{-z t}dt=\mathfrak{b}e^{-x\Phi(z)}
\end{equation}
and
\begin{equation}\label{eq:LT1}
	\int_{0}^\infty f_\Phi(x,t)e^{-z t}dt=\frac{\Phi^\dagger\lbrb{z}}{z}e^{-x\Phi(z)},
\end{equation}	
where
\begin{equation}\label{def:Pdag}
	\begin{split}
		&\Phi^\dagger(z):=\Phi(z)-q-\mathfrak{b}z.
	\end{split}
\end{equation}
Expression \eqref{eq:LT1} is the starting point of our study. 

Let us now fix some notation. Here and hereafter, we use $\Cb$ for the complex plane and we write $z=a+ib=\Re(z)+i\Im(z)$. For any $a \in \R$  we set $\mathbb{H}_a:=\{z \in \mathbb{C}: \ \Re z>a\}$. For any  $\alpha, \beta \in (-\pi, \pi]$ with $\beta<\alpha$ we put
\begin{equation*}
	\C(\alpha,\beta):=\{z \in \C: \ \beta<{\rm Arg}(z)<\alpha, |z|>0\}
\end{equation*}
and $\C(\alpha):= \{z \in \C: |{\rm Arg}(z)|<\alpha, |z|>0\}$ when $\alpha>0$.

Throughout the paper, we use $C$ to denote any constant whose value is inessential. If needed we underline the dependence  of $C$ on some parameters $p_1,p_2,\dots$ by using  $C(p_1,p_2,\dots)$.

We use $\so{\cdot},\bo{\cdot}$ in the standard fashion with, e.g. $\so{g(x)}$, as $x \to a$, denoting a generic function $f$ such that $\lim_{x \to a}|f(x)|/|g(x)|=0$, while $\bo{g(x)}$, as $x \to a$, meaning a generic function $f$ with $\limsup_{x \to a}|f(x)|/|g(x)|<\infty$. The same notation is also reserved for functions on regions of $\Cb$. Furthermore, we say that $f \asymp g$, as $x \to a$, if $f=\bo{g(x)}$ and $g=\bo{f(x)}$, as $x \to a$.
For any two functions $f,g:[0,+\infty) \to \R$, we denote the convolution product of $f$ and $g$ as
\begin{equation*}
	(f \ast g)(t)=\int_0^t f(s)g(t-s)ds, \ t \ge 0.
\end{equation*}
Furthermore, we denote the convolution powers as
\begin{equation*}
	f^{\ast 0}(t)=\delta_0 \qquad f^{\ast 1}(t)=f(t) \qquad f^{\ast n}(t)=(f \ast f^{\ast (n-1)})(t), \ n \ge 2.
\end{equation*}

The next lemma collects well-known properties of Bernstein functions used throughout this paper.
\begin{lem}\label{lem:Bern}
	Let $\phi$ be a non-zero Bernstein function. Then
	\begin{enumerate}
		\item\label{it:phi} $\phi$ is non-decreasing on $\lbbrb{0,\infty}$ with $\phi(\infty)=\infty\iff \bar{\mu}_\phi(0)=\infty\text{ or }\mathfrak{b}>0$;
		\item \label{it:sign} for any $z \in \overline{\mathbb{H}_0}$ we have that $\Re \Phi(z) \geq 0$ and \begin{equation}\label{eq:Re>}
		  \Re(\phi(z))\geq \phi(\Rez);
		 \end{equation}
		\item\label{it:phi'} $\phi'$ is completely monotone on $(0,\infty)$, i.e., for all $n\geq 1$,  $\minusone^{n-1}\phi^{(n)}(x)\geq 0$ on $(0,\infty)$, and $\limi{x}\phi'(x)=\mathfrak{b}$; as a result $\phi''<0$ and $\phi'''>0$ on $(0,\infty)$ and $\phi'(0^+)<\infty\iff \IntOI y\mu_\phi(dy)<\infty$;
		\item\label{it:ineq} for all $x>0$ it holds that $x\phi'(x)\leq \phi(x)$ and $-x^2\phi''(x)\leq 2\phi(x)$;
		\item\label{it:real} for any $z\in \mathbb{H}_0$ and any $n\geq 1$ it holds that $\abs{\phi^{(n)}(z)}\leq \abs{\phi^{(n)}\lbrb{\Re(z)}}$;
		\item\label{it:asymp} for any $\phi$ it holds that $\abs{\phi(z)}=\mathfrak{b}|z|\lbrb{1+\so{1}}$, as $z \to \infty$ uniformly on $\overline{\mathbb{H}_0}$, and if $\phi(0)=q>0,\phi(\mathfrak{u}_\phi)=0$, where $\mathfrak{u}_\phi$ is the first zero of $\phi$ on $(-\infty,0)$ and $\phi$ extends analytically (continuously at the boundary) at least to $\overline{\mathbb{H}_{\mathfrak{u}_\phi}}$ then  $\abs{\phi(z)}=\mathfrak{b}|z|\lbrb{1+\so{1}}$, as $z \to \infty$ uniformly on the same region $\overline{\mathbb{H}_{\mathfrak{u}_\phi}}$;
		\item\label{it:compos} if $\phi_1,\phi_2$ are two Bernstein functions, then $z \in [0,+\infty) \mapsto \phi_1(\phi_2(z))) \in \R$ is a Bernstein function;
		\item\label{it:limit} if $(\phi_n)_{n \ge 0}$ is a sequence of Bernstein functions and $\phi:[0,+\infty) \to \R$ is such that $\lim_{n \to \infty}\phi_n(z)=\phi(z)$ for any $z>0$, then $\phi$ is a Bernstein function;
		\item\label{it:imIq} for any $a>0, b\in\Rb$ it holds that
		 \begin{equation}\label{eq:imIq}
		 \qquad\frac{\abs{\Im\lbrb{\Phi\lbrb{a\lbrb{1+ib}}}}}{\Phi(a)}\leq \abs{b}a\frac{\Phi'(a)}{\Phi(a)}\leq |b|,\quad \frac{\Re\lbrb{\Phi\lbrb{a\lbrb{1+ib}}}-\Phi(a)}{\Phi(a)}\leq \frac{b^2a^2}{2}\frac{-\Phi''(a)}{\Phi(a)}\leq b^2.
		 \end{equation}

	\end{enumerate}
\end{lem}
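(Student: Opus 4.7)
The plan is to establish each of the nine items directly from the integral representation \eqref{def:Phi}, with items (vii) and (viii) being standard facts I will cite from \cite{librobern} (composition of Bernstein functions is Bernstein; the class of Bernstein functions is closed under pointwise convergence on $(0,\infty)$, via selection of the underlying L\'evy--Khintchine triples).

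For items (i)--(v), I differentiate \eqref{def:Phi} under the integral sign, justified by $\int\min(y,1)\mu_\phi(dy)<\infty$ and dominated convergence, to obtain $\phi'(x) = \mathfrak{b} + \int_0^\infty y\,e^{-xy}\mu_\phi(dy)$ and, for $n\ge 2$, $(-1)^{n-1}\phi^{(n)}(x) = \int_0^\infty y^n e^{-xy}\mu_\phi(dy)$. These formulas immediately give (iii) (complete monotonicity of $\phi'$; limit $\mathfrak{b}$ at infinity by dominated convergence; finiteness of $\phi'(0^+)$ by monotone convergence, equivalent to $\int y\,\mu_\phi(dy)<\infty$) and (v), via $|e^{-zy}|=e^{-(\Re z)y}$. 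The monotonicity in (i) is clear, and the dichotomy follows from monotone convergence in \eqref{def:Phi} as $x\uparrow\infty$, yielding $\phi(\infty)=q+\bar{\mu}_\phi(0)$ if $\mathfrak{b}=0$ and $\phi(\infty)=\infty$ if $\mathfrak{b}>0$. Item (ii) is immediate from $\Re\phi(z) = q + \mathfrak{b}\Re z + \int_0^\infty(1 - e^{-(\Re z)y}\cos((\Im z)y))\mu_\phi(dy) \ge \phi(\Re z) \ge 0$. For (iv), I use the elementary inequalities $u e^{-u} \le 1-e^{-u}$ and $u^2 e^{-u} \le 2(1-e^{-u})$ on $u\ge 0$ (the latter via $f(u):=2(1-e^{-u})-u^2e^{-u}$ with $f(0)=0$ and $f'(u)=e^{-u}((u-1)^2+1)>0$), applied with $u=xy$ and integrated against $\mu_\phi$.

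For (vi), the bound $|1-e^{-zy}|/|z| \le \min(y, 2/|z|)$ valid on $\overline{\mathbb{H}_0}$ is dominated by $\min(y,1)\in L^1(\mu_\phi)$ and tends to $0$ pointwise as $|z|\to\infty$, so dominated convergence gives $\phi(z)/z\to\mathfrak{b}$ uniformly on $\overline{\mathbb{H}_0}$. Finally, (ix) also comes directly from \eqref{def:Phi}: $\Im\phi(a(1+ib)) = \mathfrak{b}ab + \int_0^\infty e^{-ay}\sin(aby)\mu_\phi(dy)$, and $|\sin(aby)|\le a|b|y$ gives $|\Im\phi(a(1+ib))| \le |b|a\phi'(a)$; dividing by $\phi(a)$ and invoking $a\phi'(a)\le\phi(a)$ from (iv) yields the first bound in \eqref{eq:imIq}. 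For the second, $\Re\phi(a(1+ib))-\phi(a) = \int_0^\infty e^{-ay}(1-\cos(aby))\mu_\phi(dy)$ together with $1-\cos u\le u^2/2$ gives $\Re\phi(a(1+ib))-\phi(a)\le -(a^2b^2/2)\phi''(a)$, and dividing by $\phi(a)$ with $-a^2\phi''(a)\le 2\phi(a)$ from (iv) concludes. The only mildly subtle step I anticipate is extending (vi) to $\overline{\mathbb{H}_{\mathfrak{u}_\phi}}$: the analytic-extension hypothesis together with $q>0$ implies that \eqref{def:Phi} continues to hold there (in particular $\int_1^\infty e^{-\mathfrak{u}_\phi y}\mu_\phi(dy)<\infty$), after which the same dominated-convergence argument applies on the shifted half-plane with integrand bounded by $(1+e^{-\mathfrak{u}_\phi y})\min(y, 1/|z|)$.
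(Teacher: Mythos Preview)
Your proofs are correct. The paper itself does not prove Lemma~\ref{lem:Bern} at all: Remark~\ref{rem:Bern} simply points to \cite{librobern} for items \eqref{it:phi}, \eqref{it:phi'}, \eqref{it:compos}, \eqref{it:limit}, and to \cite{bernsteingamma}, \cite{BivBernGam}, \cite{Laguerre} for the remaining items. Your write-up is therefore strictly more self-contained than the paper's treatment --- you derive everything directly from the L\'evy--Khintchine representation \eqref{def:Phi} via elementary inequalities ($ue^{-u}\le 1-e^{-u}$, $u^2e^{-u}\le 2(1-e^{-u})$, $|\sin u|\le|u|$, $1-\cos u\le u^2/2$) and dominated/monotone convergence. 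The only point where you defer to the literature is items \eqref{it:compos} and \eqref{it:limit}, which is exactly what the paper does too. The one place to tighten is the extension of \eqref{it:asymp} to $\overline{\mathbb{H}_{\mathfrak{u}_\phi}}$: your sketch is right in spirit (Landau's theorem on the abscissa of convergence of a Laplace transform with nonnegative integrand gives $\int_1^\infty e^{-\mathfrak{u}_\phi y}\mu_\phi(dy)<\infty$, after which the dominated-convergence argument rolls through), but the paper sidesteps this entirely by citing \cite[Proposition~3.1, Item~(4)]{bernsteingamma}.
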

\begin{rmk}\label{rem:Bern}
	Items \eqref{it:phi}, \eqref{it:phi'}, \eqref{it:compos} and \eqref{it:limit} are standard and can be found in \cite{librobern}. Item \eqref{it:sign} is \cite[Proposition 3.1, Item (9)]{bernsteingamma}. Item \eqref{it:ineq} can be located in \cite[(3.3) of Proposition 3.1]{bernsteingamma}, whereas Item \eqref{it:real} is contained in \cite[(3.11) of
	Proposition 3.3]{BivBernGam}. Item \eqref{it:asymp} is taken from \cite[Proposition 3.1, Item (4)]{bernsteingamma}. Item \eqref{it:imIq} is the content of  \cite[(4.25), (4.26)]{Laguerre} combined with $a\Phi'(a)\leq \Phi(a)$ and $a\abs{\Phi''(a)}\leq 2\Phi(a)$ from Item \eqref{it:ineq}.
\end{rmk}
We will always make use of the following property.
\begin{prop}\label{prop:D2}
	For any Bernstein function $\Phi$ and for any $a>0,b\in\Rb$, it holds true that
	\begin{equation}\label{eq:DeltaR}
		\begin{split}
			&\abs{\frac{\Phi\lbrb{a\lbrb{1+ib}}}{\Phi(a)}}\leq 3\max\curly{1,b^2}.
		\end{split}
	\end{equation}
\end{prop}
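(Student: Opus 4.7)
The plan is to bound $|\Phi(a(1+ib))|$ by its real and imaginary parts and then invoke Item \eqref{it:imIq} of Lemma \ref{lem:Bern}. First, using the elementary inequality $|z| \leq |\Re(z)| + |\Im(z)|$, I write
\begin{equation*}
\abs{\Phi\lbrb{a\lbrb{1+ib}}} \leq \abs{\Re\lbrb{\Phi\lbrb{a\lbrb{1+ib}}}} + \abs{\Im\lbrb{\Phi\lbrb{a\lbrb{1+ib}}}}.
\end{equation*}
Since $a(1+ib) \in \overline{\mathbb{H}_0}$, Item \eqref{it:sign} of Lemma \ref{lem:Bern} gives $\Re(\Phi(a(1+ib))) \geq \Phi(a) \geq 0$, so the absolute value around the real part may be dropped.

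Next, I would invoke \eqref{eq:imIq}: the second inequality there yields $\Re(\Phi(a(1+ib))) \leq (1+b^2)\Phi(a)$ and the first yields $|\Im(\Phi(a(1+ib)))| \leq |b|\Phi(a)$. Combining,
\begin{equation*}
\frac{\abs{\Phi\lbrb{a\lbrb{1+ib}}}}{\Phi(a)} \leq 1 + |b| + b^2.
\end{equation*}

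Finally, a trivial case analysis finishes the proof. If $|b|\leq 1$, then $1+|b|+b^2 \leq 3 = 3\max\{1,b^2\}$, while if $|b|>1$, then $1+|b|+b^2 \leq 3b^2 = 3\max\{1,b^2\}$. Either way, \eqref{eq:DeltaR} holds. There is no real obstacle; the statement is a short consequence of Item \eqref{it:imIq} once the real and imaginary parts are separated, and the constant $3$ is the sharp one that accommodates both regimes $|b|\leq 1$ and $|b|>1$ through the single expression $\max\{1,b^2\}$.
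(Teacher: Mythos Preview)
Your proof is correct and essentially identical to the paper's: both split into real and imaginary parts, invoke Item~\eqref{it:sign} to drop the absolute value on the real part, apply the two bounds from \eqref{eq:imIq} to obtain $1+|b|+b^2$, and finish with the same case analysis. The only cosmetic difference is that the paper first writes the ratio as $1 + \frac{\Re(\Phi(a(1+ib)))-\Phi(a)}{\Phi(a)} + i\frac{\Im(\Phi(a(1+ib)))}{\Phi(a)}$ before applying the triangle inequality, whereas you apply it directly to $\Phi(a(1+ib))$; the resulting bound is the same.
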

\begin{proof}
	Using $|1+z|\leq 1+\abs{\Re(z)}+\abs{\Im(z)}$ and \eqref{eq:Re>} we get
	\begin{equation*}
		\begin{split}
			\abs{\frac{\Phi\lbrb{a\lbrb{1+ib}}}{\Phi(a)}}&=	\abs{1+\frac{\Re\lbrb{\Phi\lbrb{a\lbrb{1+ib}}}-\Phi(a)}{\Phi(a)}+i\frac{\Im\lbrb{\Phi\lbrb{a\lbrb{1+ib}}}}{\Phi(a)}}\\
			&\leq 
			1+\frac{\Re\lbrb{\Phi\lbrb{a\lbrb{1+ib}}}-\Phi(a)}{\Phi(a)}+ \frac{\abs{\Im\lbrb{\Phi\lbrb{a\lbrb{1+ib}}}}}{\Phi(a)}.
		\end{split}
	\end{equation*}
	This together with Item \eqref{it:imIq} of Lemma \ref{lem:Bern} give \eqref{eq:DeltaR}.
\end{proof}

In this paper and its examples we use the class of complete Bernstein functions. Recall that a Bernstein function $\phi$ is said to be complete if the L\'evy measure $\mu_\Phi$ admits a completely monotone density. The next lemma collects some well-known facts on complete Bernstein functions.
\begin{lem}\label{lem:CBern}
	The following properties hold true:
	\begin{enumerate}
		\item \label{it:analy1} A non-negative function $\phi:(0,+\infty) \to [0,+\infty)$ is a complete Bernstein function if and only if it admits an analytic continuation on $\C \setminus (-\infty,0]$ (that we still denote $\phi$) such that $\Im(z)\Im(\Phi(z))\ge 0$ for any $z \in \C \setminus (-\infty,0]$ and such that $\lim_{(0,+\infty)\ni z \to 0}\phi(z)$ exists;
		\item \label{it:analy2} A non-negative function $\phi:(0,+\infty) \to [0,+\infty)$ is a complete Bernstein function if and only if it admits an analytic continuation on $\C(0,\pi)$ (that we still denote $\phi$) such that $\Im(\Phi(z))\ge 0$ for any $z \in \C(0,\pi)$ and such that $\lim_{(0,+\infty)\ni z \to 0}\phi(z)$ exists;
		\item\label{it:composC} if $\phi_1,\phi_2$ are two complete Bernstein functions, then $z \in [0,+\infty) \mapsto \phi_1(\phi_2(z))) \in \R$ is a complete Bernstein function;
		\item\label{it:limitC} if $(\phi_n)_{n \ge 0}$ is a sequence of complete Bernstein functions and $\phi:[0,+\infty) \to \R$ is such that $\lim_{n \to +\infty}\phi_n(z)=\phi(z)$ for any $z>0$, then $\phi$ is a complete Bernstein function;
		\item\label{it:anglim} if $\phi$ is a complete Bernstein function, then for any $\alpha \in (0,\pi)$ one has $\lim_{\C(\alpha) \ni z \to \infty}\frac{\Phi(z)}{z}=\mathfrak{b}$.
	\end{enumerate}
\end{lem}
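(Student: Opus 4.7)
The plan is to treat Items (1)--(4) as essentially classical facts from the theory of complete Bernstein/Pick functions (for instance Chapter 6--7 of \cite{librobern}), and to give a direct argument only for Item (5), which is the only part that is truly about the boundary/angular behaviour at infinity.

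For Item (1), I would invoke the standard equivalence between complete Bernstein functions on $(0,\infty)$ and the restrictions to $(0,\infty)$ of Pick/Nevanlinna functions on $\C\setminus(-\infty,0]$: the ``only if'' direction follows from the Stieltjes/integral representation $\phi(z)=a+\mathfrak{b}z+\int_{(0,\infty)}\frac{z}{z+t}\sigma(dt)$ with $\int(1+t)^{-1}\sigma(dt)<\infty$, which makes analytic sense on $\C\setminus(-\infty,0]$ and clearly satisfies $\Im(z)\Im(\phi(z))\ge 0$; the ``if'' direction is the Nevanlinna characterization combined with positivity on $(0,\infty)$. Item (2) is just Item (1) combined with the Schwarz reflection principle: if $\phi$ is analytic on the upper half plane $\C(0,\pi)$, continuous at the boundary piece $(0,\infty)$ with real values there, and $\Im\phi\ge 0$ on $\C(0,\pi)$, then the reflection $\overline{\phi(\bar z)}$ furnishes an analytic continuation to $\C\setminus(-\infty,0]$ satisfying the hypotheses of Item (1). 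Items (3) and (4) are then immediate from the Pick characterization: if $\phi_1,\phi_2$ each send the upper half plane (and $(0,\infty)$) to the upper half plane (and $(0,\infty)$), so does $\phi_1\circ\phi_2$; and pointwise limits on $(0,\infty)$ of Pick functions are, by Vitali/Montel, locally uniform limits on compact subsets of the upper half plane of an analytic family whose imaginary parts remain non-negative, giving a Pick function again. These two statements are explicit in \cite{librobern}.

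The only genuine calculation is Item (5). I would start from the integral representation in Item (1),
\begin{equation*}
\frac{\phi(z)}{z}=\frac{a}{z}+\mathfrak{b}+\int_{(0,\infty)}\frac{\sigma(dt)}{z+t},\qquad z\in\C\setminus(-\infty,0],
\end{equation*}
where I identify the linear coefficient with $\mathfrak{b}$ from Item \eqref{it:phi'} of Lemma \ref{lem:Bern} by letting $z\to\infty$ along $(0,\infty)$. The term $a/z$ clearly vanishes as $|z|\to\infty$. For the integral, the key geometric estimate is that for each $\alpha\in(0,\pi)$ there exists $c(\alpha)>0$ with
\begin{equation*}
|z+t|\ge c(\alpha)\lbrb{|z|+t},\qquad z\in\C(\alpha),\ t>0.
\end{equation*}
This is obtained from $|z+t|^2=|z|^2+2t\,\Re(z)+t^2$ using $\Re(z)\ge |z|\cos\alpha$: for $\alpha\le\pi/2$ one gets $|z+t|\ge (|z|+t)/\sqrt 2$, and for $\alpha>\pi/2$ minimising in $t$ gives $|z+t|\ge|z|\sin\alpha$ and symmetrically $|z+t|\ge t\sin\alpha$, whence the bound with $c(\alpha)=\sin\alpha/2$.

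Using this estimate, $\labs{(z+t)^{-1}}\le c(\alpha)^{-1}(|z|+t)^{-1}$, which is bounded by $c(\alpha)^{-1}(1+t)^{-1}$ as soon as $|z|\ge 1$, hence integrable against $\sigma$. Since pointwise $(z+t)^{-1}\to 0$ as $|z|\to\infty$ uniformly in $\arg z\in(-\alpha,\alpha)$ for each fixed $t$, dominated convergence gives $\int(z+t)^{-1}\sigma(dt)\to 0$, and therefore $\phi(z)/z\to\mathfrak{b}$ as $\C(\alpha)\ni z\to\infty$. The only subtlety worth flagging is that the bound $c(\alpha)$ degenerates as $\alpha\uparrow\pi$, but this is harmless since the claim is stated for each fixed $\alpha\in(0,\pi)$. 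No step should present a serious obstacle; the work is essentially organisational, and the main thing to be careful about is making sure the integral representation and the identification of the drift coefficient are invoked correctly from the references.
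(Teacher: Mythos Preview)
Your proposal is correct and, if anything, more detailed than what the paper does: the paper gives no proof at all but simply records in Remark~\ref{rem:BernC} that all items can be found in \cite[Chapters 6 and 7]{librobern}. Your treatment of Items (1)--(4) by deferring to that reference matches this exactly, and your direct argument for Item (5) via the Stieltjes representation and the sectorial lower bound $|z+t|\ge c(\alpha)(|z|+t)$ is a standard and correct way to extract the angular limit, going beyond what the paper itself spells out.
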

\begin{rmk}\label{rem:BernC}
	All the items of the previous lemma can be found in \cite[Chapter $6$ and $7$]{librobern}.
\end{rmk}
We give here a further characterization of complete Bernstein functions, which is a direct consequence of the previous lemma.
\begin{prop}\label{prop:powchar}
	Let $\phi$ be a Bernstein function and denote for any $\alpha \in (0,1)$, $\phi_\alpha(z):=\phi(z^\alpha)$ for $z \ge 0$. Then the following two properties are equivalent:
	\begin{itemize}
		\item[(i)] $\phi$ is a complete Bernstein function;
		\item[(ii)] There exists a sequence $(\alpha_n)_{n \ge 0}$ in $(0,1)$ with $\alpha_n \to 1$ such that $\phi_{\alpha_n}$ is a complete Bernstein function for any $n \in \N$.
	\end{itemize}
\end{prop}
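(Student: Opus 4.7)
The plan is to use the stability of complete Bernstein functions under both composition and pointwise limits, which are recorded as Items \eqref{it:composC} and \eqref{it:limitC} of Lemma \ref{lem:CBern}.

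For the implication (i)$\Rightarrow$(ii), I would observe that for any fixed $\alpha \in (0,1)$ the function $z \mapsto z^\alpha$ is the prototypical complete Bernstein function: it admits analytic continuation to $\Cb \setminus (-\infty,0]$ and one checks directly that $\Im(z^\alpha)$ and $\Im(z)$ have the same sign on $\Cb \setminus (-\infty,0]$, so that Item \eqref{it:analy1} of Lemma \ref{lem:CBern} applies. Assuming $\phi$ is complete Bernstein, the composition $\phi_\alpha(z) = \phi(z^\alpha)$ is then complete Bernstein by Item \eqref{it:composC}. Since this holds for every $\alpha \in (0,1)$, it suffices to choose any sequence $\alpha_n \to 1$ in $(0,1)$, e.g.\ $\alpha_n = 1 - 1/(n+2)$.

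For the more substantive implication (ii)$\Rightarrow$(i), I would fix the sequence $(\alpha_n)_{n \ge 0}$ furnished by (ii) and argue by pointwise convergence. Every Bernstein function is continuous (indeed $C^\infty$) on $(0,+\infty)$, cf.\ Item \eqref{it:phi'} of Lemma \ref{lem:Bern}. Hence for any $z > 0$, since $z^{\alpha_n} \to z$ as $n \to \infty$, we obtain
\begin{equation*}
\lim_{n \to \infty} \phi_{\alpha_n}(z) \, = \, \lim_{n \to \infty} \phi(z^{\alpha_n}) \, = \, \phi(z).
\end{equation*}
As each $\phi_{\alpha_n}$ is assumed complete Bernstein, the pointwise limit $\phi$ is complete Bernstein by Item \eqref{it:limitC} of Lemma \ref{lem:CBern}, which concludes the argument.

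I do not foresee a genuine obstacle here; the whole argument is a two-line application of compositional stability together with the pointwise-limit closure of the class of complete Bernstein functions. The only point requiring minor care is justifying that $z \mapsto z^\alpha$ is complete Bernstein for $\alpha \in (0,1)$, which is classical and can alternatively be cited directly from \cite{librobern} rather than re-derived via Lemma \ref{lem:CBern}.
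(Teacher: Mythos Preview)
Your proposal is correct and follows exactly the paper's approach: Item \eqref{it:composC} of Lemma \ref{lem:CBern} for (i)$\Rightarrow$(ii) and Item \eqref{it:limitC} for (ii)$\Rightarrow$(i). Your version is in fact slightly more detailed than the paper's two-line proof, since you spell out why $z\mapsto z^\alpha$ is complete Bernstein and why the pointwise limit $\phi_{\alpha_n}(z)\to\phi(z)$ holds.
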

\begin{proof}
	Clearly, (i) implies (ii) by Item \eqref{it:composC} of Lemma \eqref{lem:CBern}. To show that (ii) implies (i), observe that $\phi(z)=\lim_{n \to +\infty}\phi_{\alpha_n}(z)$ and then we obtain the desired result by Item \eqref{it:limitC} of Lemma \ref{lem:CBern}.
\end{proof}
	\section{Main results concerning densities and their derivatives}\label{sec:mainResults}
In this section we present results concerning $\fP{x,t}$, see \eqref{def:f}, which in the case of $\mathfrak{b}=q=0$ is the (entire) density of $L(t)$. The results are in two directions - series expansion and behaviour at zero, and large asymptotics together with speed of convergence for $\fP{x,t}$ and its derivatives. We complement the latter results by providing information about $f_\phi^k(x,t)$ and $f_\phi^c(x,t)$ (see respectively \eqref{def:fk} and \eqref{def:fc}). We also furnish results for the density of $\sigma(x)$ and its derivatives. 
 
 \subsection{Behaviour at infinity of densities and their derivatives of the inverse subordinator and of the subordinator itself}\label{subsec:L}
 
 In this part we offer results concerning the large asymptotic behaviour of densities (and their derivatives) of inverse subordinators and subordinators themselves. Since, via the saddle point method, we prove the results for all derivatives simultaneously, we impose a condition which may not be optimal but is general enough and easy to implement. For this purpose, we set
 \begin{equation}\label{def:D}
 	\begin{split}
 		&\Delta(x):=\int_0^{\frac1x}y^2\nu_\Phi(dy),\,x>0,
 	\end{split}
 \end{equation}
 for the truncated second moment of the unkilled and driftless version of $\sigma$. Then, the main condition in our work is 
 \begin{equation}\label{def:condiA}
 	\begin{split}
 		&\liminfi{x}\frac{x^2\Delta(x)}{\ln(x)}=L\in\lbrbb{0,\infty}.
 	\end{split}\tag{$\mathbb{A}_1$}
 \end{equation}
The mild requirement
\begin{equation}\label{def:condiB}
	\begin{split}
		&\limsupi{x}\frac{x\Phi'''(x)}{-\Phi''(x)}=K<\infty
	\end{split}\tag{$\mathbb{A}_2$}
\end{equation}
plays a role only in some results. We discuss these conditions in Section \ref{subsubsec:EX}, where we present alternative formulation of \eqref{def:condiB} and demonstrate that they are generally milder than those in the literature. The proofs of all the results in this section are given in Section \ref{sec:proofs}.
First of all, let us underline that condition $\eqref{def:condiA}$ is related to the regularity of the function $f_\Phi$ defined in \eqref{def:f} with domain $\mathbb{D}$, see \eqref{def:Reg}, which is the content of our first result.
\begin{thm}\label{thm:regularityfphi1}
	Let $\Phi$ be the Laplace exponent of some potentially killed subordinator and assume that condition $\eqref{def:condiA}$ holds. Then, for any $n \ge 0$, there exists $x_0(n,L) \ge 0$ such that, for any $k,l \ge 0$ with $k+l \le n$, and for any $x>x_0(n,L)$ and $(x,t) \in \mathbb{D}$, $\displaystyle \frac{\partial^k}{\partial x^k}\frac{\partial^l}{\partial t^l}f_\phi(x,t)$ is well-defined and for any $a>0$
	\begin{equation}
		\frac{\partial^l}{\partial t^l} \frac{\partial^k}{\partial x^k} f_\Phi(x,t) \, = \, (-1)^k \int_{-\infty}^{+\infty} \frac{\Phi^\dagger (a+ib) (\Phi(a+ib))^k }{(a+ib)^{1-l}} e^{t(a+ib)-x\Phi(a+ib)} db,
		\label{intreprder1}
	\end{equation}
	where the integral is absolutely convergent. If $L=\infty$ in \eqref{def:condiA}, then for any $n \geq 0$, $x_0(n,\infty)=0$.
%
\end{thm}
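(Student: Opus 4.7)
The starting point is the Laplace transform identity \eqref{eq:LT1}, which together with the complex inversion theorem of \cite[Theorem 4.1.21]{abhn} would give, once we establish absolute convergence on the vertical line $\Re z=a$,
\begin{equation*}
f_\Phi(x,t) \, = \, \frac{1}{2\pi}\int_{-\infty}^{+\infty} \frac{\Phi^\dagger(a+ib)}{a+ib}\,e^{t(a+ib)-x\Phi(a+ib)}\,db.
\end{equation*}
Formula \eqref{intreprder1} will then follow by differentiating $k$ times in $x$ (producing a factor $(-\Phi(a+ib))^k$) and $l$ times in $t$ (producing $(a+ib)^l$), provided that differentiation under the integral sign is justified by a dominated convergence argument. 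So the entire content of the theorem reduces to proving an absolute-convergence estimate for the resulting integrand that is uniform in $(x,t)$ on compacts of the relevant region.

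The key quantitative input is the standard identity, obtained from the L\'evy--Khintchine form \eqref{def:Phi},
\begin{equation*}
\Re\Phi(a+ib)-\Phi(a) \, = \, \int_0^\infty e^{-ay}(1-\cos(by))\,\nu_\Phi(dy).
\end{equation*}
Using $1-\cos(u)\geq \tfrac{1}{4}u^2$ for $|u|\leq 1$ and restricting the integration to $y\in(0,1/|b|)$, I get the lower bound $\Re\Phi(a+ib)-\Phi(a)\geq \tfrac{1}{4}e^{-a/|b|}b^2\Delta(|b|)$. Under $(\mathbb{A}_1)$, for any $\alpha<L/4$ (or any $\alpha>0$ when $L=\infty$) there exists $b_0$ such that $b^2\Delta(|b|)\geq 4\alpha\ln|b|$ for $|b|\geq b_0$, so
\begin{equation*}
\bigl|e^{-x\Phi(a+ib)}\bigr| \, \leq \, e^{-x\Phi(a)}|b|^{-x\alpha}\quad\text{for }|b|\geq b_0.
\end{equation*}
Combining this with Proposition \ref{prop:D2}, which gives $|\Phi(a+ib)|\leq 3\Phi(a)\max\{1,b^2/a^2\}$, and the analogous polynomial bound for $|\Phi^\dagger(a+ib)|$, the integrand of the prospective $l$-in-$t$, $k$-in-$x$ derivative of $f_\Phi$ is controlled by a constant multiple of $|b|^{2k+l-1}|b|^{-x\alpha}$ for $|b|$ large. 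Absolute convergence holds whenever $x\alpha>2k+l$, which motivates defining
\begin{equation*}
x_0(n,L)\, := \, \frac{2n}{\alpha}\quad\text{with $\alpha$ any fixed number in }(0,L/4),
\end{equation*}
so that the bound applies simultaneously to all $k+l\leq n$ and all $x>x_0(n,L)$; when $L=\infty$, $\alpha$ may be chosen arbitrarily large, hence $x_0(n,\infty)=0$ is admissible.

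With this estimate in hand, the remaining steps are mechanical: I invoke \cite[Theorem 4.1.21]{abhn} to write $f_\Phi(x,t)$ itself as the Bromwich integral (case $k=l=0$), and then induct on $k+l$, at each step applying dominated convergence to move the partial derivative under the integral. The dominating function, the above polynomial-times-power bound, is locally integrable in $b$ uniformly for $x$ in any half-line $[x_0(n,L)+\varepsilon,\infty)$ and $t$ in any compact of $(0,\infty)$ compatible with $\mathbb{D}$, which is precisely what the differentiation under the integral sign requires. The main obstacle is really the single estimate connecting $(\mathbb{A}_1)$ to the logarithmic growth of $\Re\Phi(a+ib)-\Phi(a)$; everything else is soft analysis.
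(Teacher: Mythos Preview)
Your plan is correct and essentially matches the paper's proof: both hinge on the lower bound $\Re\Phi(a+ib)-\Phi(a)\geq c\,e^{-a/|b|}b^{2}\Delta(|b|)$ obtained from $1-\cos(by)$ restricted to $y\leq 1/|b|$, convert it via $(\mathbb{A}_1)$ into a logarithmic growth of $\Re\Phi(a+ib)$, and then appeal to Laplace inversion plus dominated convergence to justify differentiation under the Bromwich integral. The only cosmetic difference is that the paper bounds the polynomial prefactor using Item~\eqref{it:asymp} of Lemma~\ref{lem:Bern} (i.e.\ $|\Phi(a+ib)|=O(|b|)$), which yields the slightly sharper threshold $x_0(n,L)=(n+1)/(cL)$, whereas your use of Proposition~\ref{prop:D2} gives $|\Phi(a+ib)|=O(b^{2})$ and hence a larger, but equally valid, $x_0(n,L)$; in either case $x_0(n,\infty)=0$ follows.
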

Theorem \ref{thm:regularityfphi1} 
is needed for the study of the asymptotic behaviour of $f_\Phi$. The next theorem is the first result in this direction and considers the behaviour of $f_\Phi(x,t)$ when $t/x\downarrow \mathfrak{b},$ as $x\to\infty$. 
\begin{thm}\label{thm:mainL}
	Let $\Phi$ be the Laplace exponent of some potentially killed subordinator and assume that  conditions $\eqref{def:condiA}$ and $\eqref{def:condiB}$ hold true. Let also $t(x)$ be such that $t(x)/x\in\lbrb{\mathfrak{b},\Phi'(0^+)}$ and  $\limi{x}t(x)/x=\mathfrak{b}$. Consider 
	\begin{equation}\label{eq:a*}
		a_*:=a_*(x)=(\phi')^{-1}\lbrb{\frac{t(x)}{x}}\in\lbrb{0,\infty},
		\end{equation}
		that is well-defined since $\phi'$ is decreasing. Define also the set
		\begin{equation}\label{def:region}
			\mathbb{D}^\prime=\{(t,x): \ x\mathfrak{b} < t \le t(x) < x\phi'(0+)\}
		\end{equation}
		and, for $(t,x) \in \mathbb{D}^\prime$, let $c:=c(t,x)=(\phi')^{-1}(t/x)\geq a_*$. Then, for any $k\geq0,l\geq0$, as $x\to\infty$,
	\begin{equation}\label{asymp}
		\begin{split}
			\sup_{x\mathfrak{b}<t\le t(x)}\abs{(-1)^k\sqrt{2\pi}\frac{c^{1-l}\sqrt{-\Phi''(c)x}}{\Phi^\dagger(c)\Phi^{k}(c)}e^{-ct+x\Phi(c)}\frac{\partial^k\partial ^l }{\partial x^k\partial t^l}\fP{x,t}-1}&=\bo{\sup_{c\geq a_*(x)}\frac{\sqrt{\ln\lbrb{c\sqrt{-\Phi''(c)x}}} }{c\sqrt{-\Phi''(c)x}}}\\
			&=\bo{\sqrt{\frac{\ln(x)}{x\ln (a_*(x))}}}.
		\end{split}
	\end{equation}
	Furtherore, $1/a_*=\so{t(x)/x-\mathfrak{b}}$, as $x \to \infty$.
\end{thm}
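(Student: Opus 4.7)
The starting point is the Laplace-inversion formula \eqref{intreprder1}, which under \eqref{def:condiA} is absolutely convergent along any vertical line $\Re(z)=a>0$. Because the integrand is holomorphic on $\Hb_0$ with sufficient vertical decay (controlled by Proposition \ref{prop:D2}), Cauchy's theorem allows me to shift the contour to $\Re(z)=c$, where $c=(\Phi')^{-1}(t/x)$ is the real saddle point of the phase $zt-x\Phi(z)$. On that line I write $z=c+ib$ and factor
\[
e^{zt-x\Phi(z)}=e^{ct-x\Phi(c)}\,e^{ibt-x[\Phi(c+ib)-\Phi(c)]},
\]
so that the large factor $e^{ct-x\Phi(c)}$ (matching the prefactor of the claimed asymptotics) comes out explicitly. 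The remaining task is to approximate the integral
\[
I(c):=\int_{-\infty}^{\infty}\frac{\Phi^\dagger(c+ib)(\Phi(c+ib))^k}{(c+ib)^{1-l}}\,e^{ibt-x[\Phi(c+ib)-\Phi(c)]}\,db
\]
uniformly as $c$ ranges over $[a_*(x),\infty)$, i.e.\ uniformly as $t$ varies in $(x\mathfrak{b},t(x)]$.

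I would split $I(c)$ at some cutoff $|b|\le B=B(c,x)$, chosen optimally at the end. On the inner region, Taylor expansion and Lemma \ref{lem:Bern}\eqref{it:real} give
\[
\Phi(c+ib)-\Phi(c)=ib\Phi'(c)-\tfrac{b^2}{2}\Phi''(c)+R(b),\qquad |R(b)|\le C|b|^3|\Phi'''(c)|,
\]
for $|b|$ of order $c$. The saddle identity $\Phi'(c)=t/x$ cancels the linear term against $ibt$, leaving the Gaussian $e^{\frac{x b^2}{2}\Phi''(c)}$ multiplied by $e^{-xR(b)}$. Assumption \eqref{def:condiB} bounds the cubic remainder by $xR(b)=O\!\left(xb^3(-\Phi''(c))/c\right)$, which is negligible on the natural Gaussian scale $|b|\lesssim 1/\sqrt{x(-\Phi''(c))}$. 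Approximating the holomorphic prefactor by its value at $b=0$ (which incurs an error of comparable order) and integrating the Gaussian gives the leading contribution
\[
\frac{\Phi^\dagger(c)\,\Phi^k(c)}{c^{1-l}}\sqrt{\frac{2\pi}{-x\Phi''(c)}}\,\bigl(1+o(1)\bigr),
\]
with a relative error of order $1/(c\sqrt{-\Phi''(c)x})$ times a logarithmic factor that will be produced by the choice of $B$.

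For the outer region $|b|>B$, I would use the L\'evy--Khintchine representation
\[
\Re\Phi(c+ib)-\Phi(c)=\int_0^{\infty}e^{-cy}\bigl(1-\cos(by)\bigr)\nu(dy)\ge 0
\]
together with the bound $1-\cos u\ge u^2/4$ for $|u|\le 1$. Restricting to $y\le 1/|b|$ and using $e^{-cy}\ge e^{-1}$ when $|b|\ge c$ produces $\Re\Phi(c+ib)-\Phi(c)\gtrsim b^2\Delta(|b|)$, and assumption \eqref{def:condiA} then gives $\Re\Phi(c+ib)-\Phi(c)\gtrsim \ln|b|$ for $|b|$ large. This yields polynomial decay $|b|^{-Cx}$, making the tail subdominant provided $B$ is picked so that the inner Taylor error $xB^3(-\Phi''(c))/c$ matches $\ln(xc^2(-\Phi''(c)))$. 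Assembling the two estimates, the relative error is $O\!\left(\sqrt{\ln(c\sqrt{-\Phi''(c)x})}/(c\sqrt{-\Phi''(c)x})\right)$; the conversion to $O(\sqrt{\ln x/(x\ln a_*)})$ follows from the growth $c^2(-\Phi''(c))\asymp \ln c$ guaranteed by \eqref{def:condiA}, together with the monotonicity $c\ge a_*$.

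The auxiliary claim $1/a_*=o(t(x)/x-\mathfrak{b})$ reduces, via $\Phi'(a)-\mathfrak{b}=\int_0^\infty y\,e^{-ay}\nu(dy)$, to showing $a(\Phi'(a)-\mathfrak{b})\to\infty$ as $a\to\infty$; this follows from $a\int_0^{1/a}y\,\nu(dy)\ge a^2\Delta(a)$ (since $y\ge ay^2$ when $y\le 1/a$) combined with \eqref{def:condiA}. The main obstacle is the \emph{uniformity} in $c\in[a_*(x),\infty)$: the saddle, the Gaussian scale and the cubic Taylor error all move with $t/x$, and neither the classical Olver formulation (which requires separation of variables and a fixed contour, cf.\ \cite{Olver70}) nor the naive steepest-descent prescription applies. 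This forces quantitative, condition-driven versions of both the central Gaussian approximation and the tail bound, and makes the careful balancing of $B$ the technical crux of the argument.
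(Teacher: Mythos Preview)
Your strategy is essentially the paper's: shift the Laplace inversion contour to the real saddle $c=(\Phi')^{-1}(t/x)$, Taylor-expand the phase using Lemma~\ref{lem:Bern}\eqref{it:real} and \eqref{def:condiB} to extract the Gaussian, and control the tail via the L\'evy--Khintchine lower bound combined with \eqref{def:condiA}. The choice of cutoff and the resulting error term also match.

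There is, however, one structural gap. Your two-region split does not close: the L\'evy--Khintchine bound you state requires $|b|\ge c$ (so that $e^{-cy}\ge e^{-1}$ on $y\le 1/|b|$), but your inner cutoff $B$ is on the Gaussian scale, hence $B\ll c$. The paper resolves this with a \emph{three}-region decomposition: on the intermediate zone $c\varepsilon\le |b|\le dc$ with $d=K^{-1}$ coming from \eqref{def:condiB}, the same Taylor bound shows the cubic term is at most $\tfrac{2}{3}$ of the quadratic, so the real part of the exponent is still $\le \tfrac16 x b^2\Phi''(c)$ and one gets a Gaussian tail estimate (this is \eqref{eq:J1}). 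Only for $|b|>dc$ does the change of variables $b\mapsto cb$ and the bound $\Re\Phi(c(1+ibd))-\Phi(c)\gtrsim b^2d^2c^2\Delta(bdc)$, together with \eqref{def:condiA}, produce the polynomial decay in \eqref{eq:J_2}. Your ingredients are the right ones; you just need this extra splitting to make the argument complete.

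One minor correction: \eqref{def:condiA} only gives $c^2(-\Phi''(c))\gtrsim \ln c$ (via Proposition~\ref{prop:D0}), not $\asymp$. The lower bound is all that is needed, since $\sqrt{\ln u}/u$ is eventually decreasing and the upper bound $c^2(-\Phi''(c))\le 2\Phi(c)=O(c)$ from Lemma~\ref{lem:Bern}\eqref{it:ineq} controls the logarithm in the numerator.
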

\begin{rmk}\label{rem:mainL} In Section \ref{subsubsec:EX} we show that \eqref{def:condiA} and \eqref{def:condiB} are generally milder than those in \cite{DonRiv}. $\eqref{def:condiB}$ holds for some Compound Poisson processes, e.g., when $\nu_\Phi(dy)=e^{-y}dy$ and cannot be deduced from  \eqref{def:condiA} which implies $\Phi(\infty)=\infty$. \eqref{def:condiA} often holds with $L=\infty$ as in the example in Section \ref{subsubsec:EX}. 
\end{rmk}
\begin{rmk}\label{rem:mainL1}
For this theorem and all others concerning the large asymptotic behaviour one can laboriously track  the constants in the speed of convergence, see \eqref{asymp}, and thereby obtain strict upper bounds on the densities and its derivatives in line with  \cite{ChoKim21,GrLTr21}. For lower bounds the saddle point method is expected to work too. For more information see the discussion in Section \ref{subsubsec:EX}.
\end{rmk}
Next, for the sake of clarity, we formulate a corollary which deals with the most usual case, i.e. when $\mathfrak{b}=q=0$ and $L=\infty$ in \eqref{def:condiA}.
\begin{coro}\label{cor:mainL}
	Let $\phi$ be a Laplace exponent of a subordinator with $\mathfrak{b}=q=0$,  let \eqref{def:condiB} hold and \eqref{def:condiA} be valid with $L=\infty$. Then $f_\Phi(x,t)$ is infinitely differentiable on $\Rb^+\times \Rb^+$. Furthermore, fix $t_\ast>0$ and consider
	\begin{equation}\label{eq:a*1}
		a_\ast:=a_\ast(x)=(\Phi')^{-1}\left(\frac{t_\ast}{x}\right),
	\end{equation}
	that is well-defined since $\Phi'$ is decreasing. Define the set 
	 \begin{equation*}
	 	\mathbb{D}^\prime=\{(t,x):\, 0 < t \le t_\ast < x\phi'(0+)\}
	 \end{equation*}
	 and, for $(t,x) \in \mathbb{D}^\prime$, let $c:=c(t,x)=(\phi')^{-1}(t/x)\geq a_*$. Then, $1/a_*=o(1/x)$ as $x \to \infty$, and,  for any $k\geq0,l\geq0$, as $x\to\infty$,
	 \begin{equation}\label{asymp1}
	 	\begin{split}
	 		\sup_{0< t \le t_\ast}\abs{(-1)^k\sqrt{2\pi}\frac{c^{1-l}\sqrt{-\Phi''(c)x}}{\Phi^{k+1}(c)}e^{-ct+x\Phi(c)}\frac{\partial^k\partial ^l }{\partial x^k\partial t^l}\fP{x,t}-1}=\bo{\sqrt{\frac{1}{x}}}.
	 	\end{split}
	 	\end{equation}
\end{coro}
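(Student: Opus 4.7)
The plan is to derive Corollary~\ref{cor:mainL} as a direct specialization of Theorem~\ref{thm:mainL}, together with Theorem~\ref{thm:regularityfphi1} for the global smoothness claim. The first step is to record the simplifications arising from the extra assumptions $\mathfrak{b}=q=0$: by \eqref{def:Pdag} one has $\Phi^\dagger\equiv\Phi$, and the domain $\mathbb{D}$ in \eqref{def:Reg} reduces to $(0,\infty)\times(0,\infty)$. Moreover, since \eqref{def:condiA} with $L=\infty$ forces the L\'evy measure to have infinite mass near the origin, $\Phi$ is non-trivial, so by Item~\eqref{it:phi'} of Lemma~\ref{lem:Bern}, $\Phi'(0^+)>0$ (possibly $+\infty$) and $\Phi'$ is strictly decreasing, making $(\Phi')^{-1}$ in \eqref{eq:a*1} well-defined.

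For the smoothness assertion, I would invoke Theorem~\ref{thm:regularityfphi1} with $L=\infty$, which yields $x_0(n,\infty)=0$ for every $n\ge 0$. Consequently, for any $k,l\ge 0$ the partial derivative $\partial_x^k\partial_t^l f_\Phi(x,t)$ admits the absolutely convergent representation \eqref{intreprder1} on all of $(0,\infty)\times(0,\infty)$. Joint $C^\infty$-smoothness on $\R^+\times\R^+$ then follows from standard differentiation-under-the-integral arguments applied to that contour integral (the absolute convergence is uniform on compact subsets of the domain).

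The asymptotic \eqref{asymp1} is obtained by applying Theorem~\ref{thm:mainL} with the constant choice $t(x)\equiv t_*$. For this choice $t(x)/x=t_*/x$ belongs to $(\mathfrak{b},\Phi'(0^+))=(0,\Phi'(0^+))$ as soon as $x>t_*/\Phi'(0^+)$, and $t(x)/x\to 0=\mathfrak{b}$; thus the hypotheses of Theorem~\ref{thm:mainL} are satisfied, and the set $\mathbb{D}'$ from \eqref{def:region} reduces exactly to $\{(t,x):\,0<t\le t_*<x\Phi'(0^+)\}$. Substituting $\Phi^\dagger=\Phi$ into the leading factor of \eqref{asymp} produces the claimed $c^{1-l}\sqrt{-\Phi''(c)x}/\Phi^{k+1}(c)$ appearing in \eqref{asymp1}.

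It remains to simplify the error term. The last conclusion of Theorem~\ref{thm:mainL} gives $1/a_*=o(t(x)/x-\mathfrak{b})=o(1/x)$, hence $a_*(x)/x\to\infty$ and $\ln a_*(x)\ge \ln x$ for $x$ sufficiently large. Therefore
\begin{equation*}
\sqrt{\frac{\ln x}{x\ln a_*(x)}}\ \le\ \sqrt{\frac{1}{x}}
\end{equation*}
eventually, collapsing the $O$-term in \eqref{asymp} to $O(\sqrt{1/x})$ uniformly over $0<t\le t_*$, which matches \eqref{asymp1}. There is no real obstacle in the argument: the corollary is a repackaging of the two prior theorems, and the only bookkeeping points are (i) verifying that $t(x)\equiv t_*$ fits the range/limit hypotheses of Theorem~\ref{thm:mainL}, and (ii) tracking that $L=\infty$ is exactly what makes $x_0(n,\infty)$ vanish in Theorem~\ref{thm:regularityfphi1}, allowing smoothness and the asymptotic to hold on the full positive quadrant rather than only for $x$ beyond some threshold.
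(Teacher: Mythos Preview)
Your proposal is correct and follows exactly the approach the paper intends: the corollary is a direct specialization of Theorem~\ref{thm:mainL} with the constant choice $t(x)\equiv t_*$, combined with Theorem~\ref{thm:regularityfphi1} (where $L=\infty$ forces $x_0(n,\infty)=0$) for the global smoothness. The paper does not spell out a separate proof for this corollary, and your bookkeeping---reducing $\Phi^\dagger$ to $\Phi$, checking that $t(x)/x\to 0=\mathfrak{b}$, and simplifying the error via $1/a_*=o(1/x)\Rightarrow\ln a_*\ge\ln x$ eventually---is precisely what is required.
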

Next, we consider the case when $t/x$ does not converge to $\mathfrak{b}$ or $\phi'(0^+)$. 
\begin{thm}\label{thm:main1}
	Let $\Phi$ be the Laplace exponent of some potentially killed subordinator and assume that \eqref{def:condiA} holds. For all $[t_1,t_2]\subset \lbrb{\mathfrak{b},\Phi'(0^+)}$ define correspondingly
	\begin{equation*} 
		\mathbb{D}'=\curly{(t,x) \in \mathbb{D}: \ xt_1\leq t  \leq  xt_2}, \quad \mbox{ and } \quad   c:=c(t,x)=(\phi')^{-1}\left(t/x\right), \ \mbox{ for } (x,t) \in \mathbb{D}^\prime.
	\end{equation*}
	Then, for any $k\geq0,l\geq0$, we have, as $x\to\infty$
	\begin{equation}\label{def:fin}
		\begin{split}
			\sup_{xt_1 \le t \le xt_2}\abs{(-1)^k\sqrt{2\pi}\frac{c^{1-l}\sqrt{-\Phi''(c)x}}{\Phi^\dagger(c)\Phi^{k}(c)}e^{-ct+x\Phi(c)}\frac{\partial^k\partial ^l }{\partial x^k\partial t^l}\fP{x,t}-1}=\bo{\sqrt{\frac{\ln(x)}{x}}}.
		\end{split}
	\end{equation}
\end{thm}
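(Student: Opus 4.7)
The plan is to apply the (modified) saddle-point method directly to the absolutely convergent integral representation \eqref{intreprder1} from Theorem \ref{thm:regularityfphi1}, taking the line of integration to be $\Re(z)=c=(\Phi')^{-1}(t/x)$, the real saddle of the phase $\psi(z)=tz-x\Phi(z)$, since $\psi'(c)=t-x\Phi'(c)=0$. Because $(t,x)\in\mathbb{D}'$ forces $t/x\in[t_1,t_2]\subset(\mathfrak{b},\Phi'(0^+))$, the saddle $c$ is confined to the compact subinterval $[c_2,c_1]=[(\Phi')^{-1}(t_2),(\Phi')^{-1}(t_1)]$ of $(0,\infty)$, uniformly in $(t,x)\in\mathbb{D}'$. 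This uniform compactness of $c$ is the key structural simplification over Theorem \ref{thm:mainL} (where $c\to\infty$) and ultimately makes the denominator $c\sqrt{-\Phi''(c)\,x}$ comparable to $\sqrt{x}$, turning the error of the type $\sqrt{\ln(c\sqrt{-\Phi''(c)x})}\big/(c\sqrt{-\Phi''(c)x})$ encountered in Theorem \ref{thm:mainL} into the cleaner $\bo{\sqrt{\ln(x)/x}}$.

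The next step is to split the contour at a threshold $\delta_x\asymp \sqrt{\ln(x)/x}$ into a central Gaussian region $|b|\le\delta_x$ and a tail $|b|\ge\delta_x$. On the central region, Taylor expansion of the phase at the saddle gives
\begin{equation*}
\psi(c+ib)=tc-x\Phi(c)+\tfrac{1}{2}x\Phi''(c)(ib)^2+\bo{x|b|^3},
\end{equation*}
uniformly in $c\in[c_2,c_1]$ by Lemma \ref{lem:Bern}\eqref{it:real} applied to $\Phi'''$, together with a parallel expansion of the smooth amplitude $A_{k,l}(z)=\Phi^\dagger(z)\Phi(z)^k/z^{1-l}$ around $z=c$. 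Since $\Phi''(c)<0$ by Lemma \ref{lem:Bern}\eqref{it:phi'}, the resulting integral is essentially Gaussian and yields
\begin{equation*}
A_{k,l}(c)\,e^{tc-x\Phi(c)}\,\sqrt{\frac{2\pi}{-x\Phi''(c)}}\,\bigl(1+\bo{\sqrt{\ln(x)/x}}\bigr),
\end{equation*}
which, multiplied by the prefactor $(-1)^k$, reproduces the main term of \eqref{def:fin}.

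For the tail $|b|\ge\delta_x$, one combines $|\Phi(c+ib)|\le 3(1+b^2)\Phi(c)$ from Proposition \ref{prop:D2}, the lower bound $\Re\Phi(c+ib)\ge \Phi(c)$ from Lemma \ref{lem:Bern}\eqref{it:sign}, and the quadratic growth estimate of Lemma \ref{lem:Bern}\eqref{it:imIq}; together with $|\Phi^{(j)}(c+ib)|\le |\Phi^{(j)}(c)|$ from Lemma \ref{lem:Bern}\eqref{it:real}, these give polynomial growth of the amplitude against the Gaussian damping in $b$, so that the threshold choice $\delta_x\asymp\sqrt{\ln(x)/x}$ renders the tail contribution negligible relative to the main term. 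The main obstacle is the uniform bookkeeping: one must verify that the cubic phase remainder, the linear amplitude correction, and the Gaussian truncation jointly contribute $\bo{\sqrt{\ln(x)/x}}$ relative to the main term, uniformly in $(t,x)\in\mathbb{D}'$. This uniformity follows from the compactness of $c\in[c_2,c_1]$ together with the positivity and smoothness of $\Phi$, $\Phi'$, $|\Phi''|$, $\Phi'''$ on that interval, so that the proof reduces to a substantially streamlined version of the argument already carried out for Theorem \ref{thm:mainL}.
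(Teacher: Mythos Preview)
Your overall strategy is exactly the paper's: run the saddle-point computation of Theorem \ref{thm:mainL} on the representation \eqref{intreprder1} with $a=c=(\Phi')^{-1}(t/x)$, and exploit that for $(t,x)\in\mathbb{D}'$ the saddle is trapped in the fixed compact interval $\mathbb{V}=[(\Phi')^{-1}(t_2),(\Phi')^{-1}(t_1)]\subset(0,\infty)$. You correctly identify that this compactness is why \eqref{def:condiB} is not needed (the ratio $c\Phi'''(c)/(-\Phi''(c))$ is automatically bounded on $\mathbb{V}$) and why the error collapses to $\bo{\sqrt{\ln(x)/x}}$.

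There is, however, a genuine gap in your tail estimate. For the tail you need a \emph{lower} bound on $\Re\Phi(c+ib)-\Phi(c)$ that grows in $|b|$; neither Lemma \ref{lem:Bern}\eqref{it:sign} (which gives only $\ge 0$) nor Lemma \ref{lem:Bern}\eqref{it:imIq} (which is an \emph{upper} bound $\le b^2\Phi(c)$ after rescaling) supplies this, so your ``Gaussian damping in $b$'' on the tail is not justified by the cited inequalities. The paper obtains the required lower bound directly from
\[
\Re\Phi(c+ib)-\Phi(c)=\int_0^\infty(1-\cos(by))e^{-cy}\nu_\Phi(dy)\ \ge\ c_0\,e^{-c/|b|}\,b^2\,\Delta(|b|),
\]
as in \eqref{eq:Delta}/\eqref{eq:Delta_1}, and this is where condition \eqref{def:condiA} is actually used beyond merely furnishing the integral representation via Theorem \ref{thm:regularityfphi1}. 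Correspondingly, the paper keeps the three-region split of Theorem \ref{thm:mainL}: central $|b|\le\varepsilon c$, near tail $\varepsilon c\le|b|\le dc$ (where the third-order Taylor expansion still controls the phase), and far tail $|b|\ge dc$ (where the $\Delta$-based bound takes over). Your central window $\delta_x$ matches the paper's $\varepsilon c$ once $c\in\mathbb{V}$, but your single ``tail'' conflates two regimes that need different treatment.
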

\begin{rmk}\label{rem:main1}
	 For $f_\Phi$ only, such a result is contained in \cite[Theorem 3.2]{DonRiv}. It will be discussed in detail in Section \ref{subsubsec:EX}. 
\end{rmk}

Finally, we consider case when $t/x$  converges from below to $\phi'(0^+)$. Then, we need a local condition which is a modification of $\eqref{def:condiB}$, namely
\begin{equation}\label{def:condiB'}
	\begin{split}
		&\limsupo{x}\frac{x\Phi'''(x)}{-\Phi''(x)}=K<\infty.
	\end{split}\tag{$\mathbb{A}'_2$}
\end{equation}
In this case we can prove the following result.
\begin{thm}\label{thm:main2}
	Let $\Phi$ be the Laplace exponent of some potentially killed subordinator and assume that  conditions $\eqref{def:condiA}$ and $\eqref{def:condiB'}$ hold true. If $t=t(x)$ is such that $t(x)/x\in\lbrb{\mathfrak{b},\Phi'(0^+)}$ and  $\limi{x}t(x)/x=\Phi'(0^+)$. Assume further that $a_*:=a_*(x)=(\Phi')^{-1}(t(x)/x)$ satisfies
	\begin{equation}\label{eq:addCondi}
		\limi{x} -x\phi''(a_*)a^2_*=\infty, \quad \limsupi{x}\frac{-\ln\lbrb{a_*}}{x}<\infty, \quad \mbox{ and } \quad \forall \delta>0 \limi{x}e^{-\delta x}x\phi''(a_*)a^2_*=0.
	\end{equation}
	Then, for any $k\geq0,l\geq0$, we have, as $x \to \infty$, 
	\begin{equation}\label{asymp2}
		\begin{split}
			& \frac{\partial^k\partial ^l }{\partial x^k\partial t^l}\fP{x,t}=\frac{(-1)^k}{\sqrt{2\pi}}e^{a_*t-x\Phi(a_*)}\frac{\Phi^\dagger(a_*)\Phi^{k}(a_*)}{a^{1-l}_*\sqrt{-\Phi''(a_*)x}}\lbrb{1+\bo{\frac{\sqrt{\ln\lbrb{a_*\sqrt{-\Phi''(a_*)x}}} }{a_*\sqrt{-\Phi''(a_*)x}}}}.
		\end{split}
	\end{equation} 
\end{thm}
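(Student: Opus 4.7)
The plan is to apply the saddle point method to the integral representation \eqref{intreprder1} with the choice $a=a_\ast$, which by construction satisfies the saddle point equation $t-x\Phi'(a_\ast)=0$. Writing the integrand as $(-1)^k g(a_\ast+ib)\,e^{\Psi(b)}$, where $\Psi(b):=t(a_\ast+ib)-x\Phi(a_\ast+ib)$ and $g(z):=\Phi^\dagger(z)\Phi(z)^k z^{l-1}$, Taylor expansion gives $\Psi(b)=ta_\ast-x\Phi(a_\ast)+\tfrac12 b^2 x\Phi''(a_\ast)+R(b)$ (the linear term vanishing by the saddle point equation). Since $\Phi''<0$, the quadratic term yields Gaussian decay with scale $\sigma_\ast:=(-x\Phi''(a_\ast))^{-1/2}$. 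The first condition in \eqref{eq:addCondi} guarantees $\sigma_\ast\ll a_\ast$, so the effective integration region on the scale of $\sigma_\ast$ sits well inside the disk $|b|<a_\ast$ where the local Bernstein expansion is valid.

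First I would split the integral at a threshold $b_0$ chosen so that $\sigma_\ast\ll b_0\ll a_\ast$; concretely a multiple of $\sigma_\ast\sqrt{\ln(a_\ast/\sigma_\ast)}$ captures both the Gaussian mass and the logarithmic factor appearing in the error term of \eqref{asymp2}. For the central part $|b|\le b_0$, I would use $\eqref{def:condiB'}$ together with Lemma \ref{lem:Bern}\eqref{it:real} to bound $|R(b)|\lesssim |b|^3 x|\Phi'''(a_\ast)|\lesssim |b|^3 x|\Phi''(a_\ast)|/a_\ast$, so that after the substitution $b=\sigma_\ast u$ the cubic remainder is of order $|u|^3/(a_\ast/\sigma_\ast)$, which is $o(1)$ uniformly in $|u|\le b_0/\sigma_\ast$. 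Similarly, Taylor expansion of $g$ at $a_\ast$, using Lemma \ref{lem:Bern}\eqref{it:real} and \eqref{it:ineq}, gives $g(a_\ast+ib)=g(a_\ast)(1+O(|b|/a_\ast))$. The Gaussian integral over $|u|\le b_0/\sigma_\ast$ then produces the main term, with relative error controlled by $b_0/a_\ast$ plus the Gaussian tail beyond $b_0/\sigma_\ast$; optimizing in $b_0$ yields the claimed rate $\sqrt{\ln(a_\ast/\sigma_\ast)}/(a_\ast/\sigma_\ast)$.

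The main obstacle is the tail $|b|>b_0$, where $a_\ast\to 0$ creates two dangers: the prefactor $|g(a_\ast+ib)|$ may grow in $b$, and the decay of $\Re\Phi(a_\ast+ib)-\Phi(a_\ast)$ must overcome this growth. I would split the tail further into a moderate range $b_0<|b|\le M a_\ast$ and a far range $|b|>M a_\ast$. On the moderate range, the bound from Lemma \ref{lem:Bern}\eqref{it:imIq} gives $\Re\Phi(a_\ast+ib)-\Phi(a_\ast)\ge c\,b^2(-\Phi''(a_\ast))$ (via a one-sided refinement obtained by expanding $1-\cos(by)$ in the L\'evy representation of $\Re\Phi$), so the exponential suppression remains Gaussian and the contribution is negligible relative to the main term. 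On the far range $|b|>Ma_\ast$, I would rescale $b=a_\ast v$ and invoke Proposition \ref{prop:D2} and \eqref{def:condiA} to bound $|g(a_\ast(1+iv))|\le C\,a_\ast^{k+l-1}\Phi(a_\ast)^{k+1}(1+v^2)^{k+1}$, while using the L\'evy representation together with \eqref{def:condiA} to show $\Re\Phi(a_\ast+ib)\ge \delta x$ for some $\delta>0$ once $|b|\ge 1/a_\ast$; this is precisely where the third condition in \eqref{eq:addCondi} is used to absorb any polynomial growth in $a_\ast^{-1}$ against $e^{-\delta x}$, and the second condition guarantees that the bound $e^{ta_\ast}\le e^{\Phi'(0^+)\cdot x}$ combined with $a_\ast^{l-1}$ does not spoil this. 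Assembling the three estimates gives \eqref{asymp2} uniformly.
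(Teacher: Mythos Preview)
Your strategy is exactly the paper's: apply \eqref{intreprder1} at the saddle $a=a_*$, cut out a central window of width $\sim\sigma_*\sqrt{\ln(a_*/\sigma_*)}$ with $\sigma_*=(-x\Phi''(a_*))^{-1/2}$, control the cubic Taylor remainder via \eqref{def:condiB'} and Lemma~\ref{lem:Bern}\eqref{it:real}, and expand $g$ to first order using $|g'(a_*+ib)/g(a_*+ib)|\le C/a_*$. This reproduces the main term with the stated error. The moderate range $b_0<|b|\le da_*$ (your ``Gaussian'' piece) is likewise handled as in the paper's $J_1$ estimate.

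The gap is in your far range. The claim ``$\Re\Phi(a_*+ib)\ge\delta x$'' cannot be right (the left side is independent of $x$); what one actually shows is a lower bound on $\Re\Phi(a_*+ib)-\Phi(a_*)$, and the key point you miss is that this requires a \emph{three}-way split of the tail, not two. After your rescaling $b=a_*v$, the L\'evy estimate gives $\Re\Phi(a_*(1+iv))-\Phi(a_*)\ge c(a_*v)^2\Delta(a_*v)$; condition \eqref{def:condiA} is only useful once $a_*v$ exceeds a \emph{fixed constant} $A$, i.e.\ once $|b|>A$, not $|b|>1/a_*$. On the intermediate piece $Ma_*<|b|\le A$ one has $\Delta(a_*v)\ge\Delta(A)$, hence the exponent is $\ge c\,b^2\Delta(A)$ and the integral is controlled by a Gaussian-type bound (this is the paper's first half of $J_2$). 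Only on $|b|>A$ does \eqref{def:condiA} give $\Re\Phi(a_*+ib)-\Phi(a_*)\ge cM\ln|b|$, producing a factor $|b|^{-cMx}$; after integrating, the resulting bound carries a prefactor $a_*^{-(2k+l+2)}$, and \emph{this} is where the second condition in \eqref{eq:addCondi} enters, ensuring $(2k+l+2)\ln(1/a_*)=O(x)$ so it is beaten by $e^{-M'x\ln A}$. The third condition then kills the remaining $a_*\sqrt{-\Phi''(a_*)x}\,e^{-a'x}$. Your proposed use of the second condition (for $e^{ta_*}$ and $a_*^{l-1}$) is not what is needed, since those factors already sit in the main term.
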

\begin{rmk}\label{rem:main2}
Conditon \eqref{eq:addCondi} is not so restrictive, as will be observed in Section \ref{subsubsec:EX}.
We also mention that if $\phi''(0^+)<\infty$ then  \eqref{def:condiB'} holds true and the last two conditions in \eqref{eq:addCondi} follow from the first, which actually becomes $\limi{x}xa^2_*=\infty$.
\end{rmk}
Next,  in the spirit of Theorem 1 in \cite{DonRiv}, we use the results above to obtain information for the densities of the subordinators themselves. Recall that $\Pbb{\sigma(x)\in dt}=g_\Phi(x,t)dt$ provided $g_\Phi$ exists (see \eqref{def:gphi}). Furthermore, set $G_\Phi(x,t)=\Pbb{\sigma(x) \le t}$. The proof of the next result uses the relation
\[\IntOI e^{-z t}G_\Phi(x,t)dt=\frac{e^{-x\Phi(z)}}{z}={\frac{1}{\Phi^\dagger\lbrb{z}}}\int_{0}^\infty f_\Phi(x,t)e^{-z t}dt\]
and is therefore identical to the ones of Theorem \ref{thm:mainL}, Theorem \ref{thm:main1} and Theorem \ref{thm:main2} including the existence of $g_\phi$, that is the derivative of $G_\phi$ in $t$, and its derivatives. We only state the following result.
\begin{thm}\label{thm:mainS}
	Let $\Phi$ be the Laplace exponent of some potentially killed subordinator and assume that condition \eqref{def:condiA} holds. Then, for any $n \ge 0$, there exists $x_0(n,L) \ge 0$ such that, for any $k,l\ge 0$ with $k+l \le n$ and $(x,t) \in \mathbb{D}$ with $x>x_0(n,L)$, $\displaystyle \frac{\partial^k}{\partial x^k}\frac{\partial^l}{\partial t^l}G_\Phi(x,t)$ is well-defined. If $L=\infty$ in \eqref{def:condiA}, then $x_0(n,\infty)=0$. In particular, the density $g_\Phi$ is well-defined for $x > x_0(1,L)$ and $\displaystyle \frac{\partial^k}{\partial x^k}\frac{\partial^l}{\partial t^l}g_\Phi(x,t)$ is well-defined for $x>x_0(k+l+1,L)$. Furthermore,the following three statements hold true.
	\begin{itemize}
		\item[$(i)$]  Under the conditions and the notation of Theorem~\ref{thm:mainL}  set $t=t(x)$ with $t(x)/x \in (\mathfrak{b},\Phi'(0+))$ and $t(x)/x \to \mathfrak{b}$, as $x \to \infty$. Then, as $x\to\infty$,
		\begin{equation}\label{asympS1}
			\begin{split}
				\sup_{x\mathfrak{b}<t \le t(x)}\abs{ \Phi^\dagger(c)\frac{\frac{\partial^k\partial ^l }{\partial x^k\partial t^l}G_{\Phi}\lbrb{x,t}}{\frac{\partial^k\partial ^l }{\partial x^k\partial t^l} f_{\Phi}\lbrb{x,t}}-1}&=\bo{\sup_{c\geq a_*(x)}\frac{\sqrt{\ln\lbrb{c\sqrt{-\Phi''(c)x}}} }{c\sqrt{-\Phi''(c)x}}}=\bo{\sqrt{\frac{\ln(x)}{x\ln a_*}}},
			\end{split}
		\end{equation}
		where
		\begin{equation*}
			\mathbb{D}^\prime=\{(t,x): \ x\mathfrak{b} < t \le t(x) < x\phi'(0+)\}, \ c:=c(t,x)=(\Phi')^{-1}(t/x), \ \mbox{ for }(t,x) \in \mathbb{D}^\prime
		\end{equation*}
		and $a_*:=a_*(x)=c(t(x),x)$.
		\item[$(ii)$]  Under the conditions of Theorem~\ref{thm:main1}  for all $[t_1,t_2] \subset (\mathfrak{b},\Phi'(0+))$ it holds, as $x\to\infty$,
		\begin{equation}\label{asympS}
			\begin{split}
				\sup_{xt_1 \le t \le xt_2}\abs{ \Phi^\dagger(c)\frac{\frac{\partial^k\partial ^l }{\partial x^k\partial t^l}G_{\Phi}\lbrb{x,t}}{\frac{\partial^k\partial ^l }{\partial x^k\partial t^l} f_{\Phi}\lbrb{x,t}}-1}
				=\bo{\sqrt{\frac{\ln(x)}{x}}},
			\end{split}
		\end{equation}
		where
		\begin{equation*}
			\mathbb{D}^\prime=\{(t,x): \ xt_1 \le t \le xt_2 \}, \ \mbox{ and } \ c:=c(t,x)=(\Phi')^{-1}(t/x) \ \mbox{ for }(t,x) \in \mathbb{D}^\prime.
		\end{equation*}
		\item[$(iii)$]Under the conditions of Theorem~\ref{thm:main2}  set $t=t(x)$ with $t(x)/x \in (\mathfrak{b},\Phi'(0+))$ and $t(x)/x \to \Phi'(0+)$ as $x \to \infty$. Then, as $x\to\infty$,
		\begin{equation}\label{asympS2}
			\frac{\partial^k\partial ^l }{\partial x^k\partial t^l}G_{\Phi}\lbrb{x,t}=\frac{1}{\phi^\dagger(a_*)}\frac{\partial^k\partial ^l }{\partial x^k\partial t^l}f_{\Phi}\lbrb{x,t}\left(1+\bo{\sqrt{\frac{\ln(x)}{x\ln(a_*(x))}}}\right),
		\end{equation}
		where $a_*:=a_*(x)=(\Phi')^{-1}(t(x)/x)$.
	\end{itemize}
	
\end{thm}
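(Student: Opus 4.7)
The plan exploits the Laplace transform identity stated just before the theorem,
\[
\int_0^\infty e^{-zt}\, G_\Phi(x,t)\, dt \;=\; \frac{e^{-x\Phi(z)}}{z} \;=\; \frac{1}{\Phi^\dagger(z)}\int_0^\infty e^{-zt}\, f_\Phi(x,t)\, dt, \qquad \Re(z)>0,
\]
which says that the Laplace transform of $G_\Phi(x,\cdot)$ differs from that of $f_\Phi(x,\cdot)$ only by the single factor $1/\Phi^\dagger(z)$. The entire programme of Theorem~\ref{thm:regularityfphi1} and of Theorems~\ref{thm:mainL}, \ref{thm:main1}, \ref{thm:main2} can therefore be transferred essentially verbatim to $G_\Phi$ and its derivatives, by simply deleting the factor $\Phi^\dagger(z)$ from each integrand.

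First I would carry out Laplace inversion along the vertical contour $\{\Re z = a\}$, $a>0$, and repeat the argument of Theorem~\ref{thm:regularityfphi1} to establish, for $x > x_0(n,L)$ and $k+l\le n$, the absolutely convergent integral representation
\[
\frac{\partial^l}{\partial t^l}\frac{\partial^k}{\partial x^k} G_\Phi(x,t) \;=\; (-1)^k \int_{-\infty}^{+\infty} \frac{\Phi(a+ib)^k}{(a+ib)^{1-l}} \, e^{t(a+ib)-x\Phi(a+ib)} \, db.
\]
Because $|\Phi^\dagger(a+ib)|\le |\Phi(a+ib)|$ on $\overline{\mathbb{H}_0}$, this integral is dominated term by term by the one analysed in \eqref{intreprder1}, so that the same $x_0(n,L)$ (in particular $x_0(n,\infty)=0$ when $L=\infty$) ensures absolute convergence and legitimacy of differentiation under the integral sign. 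The well-definedness of $g_\Phi$ and its derivatives at the claimed thresholds $x>x_0(1,L)$, $x>x_0(k+l+1,L)$ then follows by differentiating in $t$.

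For each of the three asymptotic regimes I would then run the saddle point analysis exactly as in the proofs of Theorems~\ref{thm:mainL}, \ref{thm:main1}, \ref{thm:main2}. The saddle point condition is dictated solely by the phase $tz-x\Phi(z)$, so the saddle point is still $c=(\Phi')^{-1}(t/x)$ and all contour deformations are unchanged. The only modification of the integrand is the removal of the analytic factor $\Phi^\dagger(z)$; on the Gaussian neighbourhood of $c$ that drives the leading contribution this factor is smooth and can be pulled out at the value $\Phi^\dagger(c)$, up to corrections that are absorbed into the error terms already quantified for $f_\Phi$. Consequently the main term in the saddle point expansion of $\partial_x^k\partial_t^l G_\Phi$ coincides with that of $\partial_x^k\partial_t^l f_\Phi$ with the factor $\Phi^\dagger(c)$ deleted. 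Dividing the two asymptotics produces the factor $1/\Phi^\dagger(c)$ on the right-hand sides of \eqref{asympS1}, \eqref{asympS} and \eqref{asympS2}, with speed of convergence inherited from the corresponding theorem.

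The main technical obstacle is to justify that the saddle point analysis is genuinely unaffected by the deletion of $\Phi^\dagger(z)$. This requires $\Phi^\dagger$ to be analytic on $\overline{\mathbb{H}_0}$, non-vanishing at the saddle point, and controlled by estimates of the type of Lemma~\ref{lem:Bern} and Proposition~\ref{prop:D2}. Analyticity is inherited from $\Phi$; strict positivity at $c>0$ follows from the representation $\Phi^\dagger(a)=a\int_0^\infty e^{-ay}\bar{\mu}_\phi(y)\,dy>0$ for $a>0$, which is valid whenever $\mu_\phi\not\equiv 0$, something already forced by \eqref{def:condiA} (it actually implies $\bar{\mu}_\phi(0)=\infty$). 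The bounds needed for $\Phi^\dagger$ on the vertical contour are strictly weaker than those already used for $\Phi$ in the previous proofs, because of the dominations $|\Phi^\dagger(z)|\le |\Phi(z)|$ and of Item \eqref{it:imIq} applied to $\Phi^\dagger$; hence no additional work is required and the transfer is complete.
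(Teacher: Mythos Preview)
Your proposal is correct and is exactly the approach the paper takes: the paper states explicitly that the proof ``uses the relation $\int_0^\infty e^{-zt}G_\Phi(x,t)\,dt=e^{-x\Phi(z)}/z$ and is therefore identical to the ones of Theorem~\ref{thm:mainL}, Theorem~\ref{thm:main1} and Theorem~\ref{thm:main2},'' and gives no further details.

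One small imprecision: the domination $|\Phi^\dagger(a+ib)|\le |\Phi(a+ib)|$ is not the right way to compare the two integrands (the $G_\Phi$ integrand is missing the factor $\Phi^\dagger$, so term-by-term domination would require $|\Phi^\dagger|\ge 1$, which fails near the origin). The clean argument is simply that $|\Phi(a+ib)^k/(a+ib)^{1-l}|\le C|a+ib|^{k+l-1}\le C(a^n+|b|^n)$ for $k+l\le n$, by Item~\eqref{it:asymp} of Lemma~\ref{lem:Bern}, so the same integrability condition \eqref{eq:Real1} and the same threshold $x_0(n,L)$ from Proposition~\ref{prop:LTthetapi} apply directly. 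This is a cosmetic fix; your overall strategy is sound and matches the paper.
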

In particular, under \eqref{def:condiA}, we have $f^{\rm k}_\phi(x,t)=qG_\phi(x,t)$ and $f^{\rm c}_{\Phi}(x,t)=\mathfrak{b}g_\phi(x,t)$, see \eqref{def:fk} and \eqref{def:fc}. Hence, the conclusions of Theorem \ref{thm:mainS} can be transferred to $f^{\rm c}_{\Phi}$ and $f^{\rm k}_\Phi$.
\begin{coro}\label{cor:mainS}
	Under the conditions of Theorem \ref{thm:mainS}, the asymptotics given in  \eqref{asympS1},\eqref{asympS} and \eqref{asympS2} hold for $q^{-1}f^{\rm k}_{\Phi}$ and $\mathfrak{b}^{-1}f^{\rm c}_{\Phi}$, provided $\mathfrak{b},q>0$.
\end{coro}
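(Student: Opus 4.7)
The plan reduces to using two elementary identifications: $f^{\rm k}_\phi = q G_\phi$ from \eqref{def:fk} and $f^{\rm c}_\phi = \mathfrak{b} g_\phi$ from \eqref{def:fc}. Combined with the definitional fact $G_\phi(x,t) = \int_0^t g_\phi(x,s)\,ds$, so that $\partial_t G_\phi = g_\phi$ on the relevant domain, this allows all asymptotic statements about $q^{-1} f^{\rm k}_\phi$ and $\mathfrak{b}^{-1} f^{\rm c}_\phi$ to be read off from the corresponding statements about $G_\phi$ already established in Theorem \ref{thm:mainS}. No new Laplace-inversion or saddle-point work is required.

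For $q^{-1} f^{\rm k}_\phi = G_\phi$, the three asymptotics \eqref{asympS1}, \eqref{asympS}, \eqref{asympS2} (and the underlying regularity at each order) are literally what Theorem \ref{thm:mainS} asserts for $G_\phi$; the transfer is immediate and uses nothing else.

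For $\mathfrak{b}^{-1} f^{\rm c}_\phi = g_\phi$, I will use $g_\phi = \partial_t G_\phi$ to write, for every $k, l \ge 0$, the identity $\partial^k_x \partial^l_t (\mathfrak{b}^{-1} f^{\rm c}_\phi)(x,t) = \partial^k_x \partial^{l+1}_t G_\phi(x,t)$. Applying Theorem \ref{thm:mainS} to $G_\phi$ with shifted order $(k, l+1)$ in place of $(k,l)$ then yields the asymptotic estimates corresponding to \eqref{asympS1}, \eqref{asympS}, \eqref{asympS2}, with the reference function $f_\phi$ on the right-hand side differentiated once more in $t$. Since the speeds of convergence stated in those estimates do not depend on $l$, the same error rates transfer verbatim, and the three regimes (lower boundary $t/x \downarrow \mathfrak{b}$, interior $t/x \in [t_1,t_2]$, upper boundary $t/x \uparrow \phi'(0^+)$) correspond one-to-one to the three parts of Theorem \ref{thm:mainS}.

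The only bookkeeping step, and the one place where I would be careful, is regularity: to invoke Theorem \ref{thm:mainS} with total order $k+l+1$ one needs $x > x_0(k+l+1, L)$, which is exactly what the existence clause of that theorem provides (and under $L = \infty$ every $x > 0$ is admissible). Together with the continuity of $g_\phi(x,\cdot)$ that justifies rigorously differentiating under the integral in $G_\phi(x,t) = \int_0^t g_\phi(x,s)\,ds$, this is the only technical point to verify. I do not expect any real obstacle beyond attention to this index shift in the second identification; there is no genuinely new analytic content.
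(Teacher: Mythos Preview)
Your proposal is correct and follows the same approach as the paper, which simply notes (in the sentence immediately preceding the corollary) that $f^{\rm k}_\phi=qG_\phi$ and $f^{\rm c}_\phi=\mathfrak{b}g_\phi$ and states that the conclusions transfer. Your explicit handling of the index shift $\partial^k_x\partial^l_t g_\phi=\partial^k_x\partial^{l+1}_t G_\phi$ and the observation that the error rates are independent of $l$ is exactly the bookkeeping the paper leaves implicit.
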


\subsubsection{Discussion and comparison to existing results}\label{subsubsec:EX}
First we consider general asymptotic results in the literature for which we need the following lemmae. Recall the definition of $\Delta(x)$ in \eqref{def:D}.
\begin{lem}\label{lem:phi''}
	Let $\phi$ be a Bernstein function. Then, for any $x>0$, it holds that
	\begin{equation}\label{eq:phi''}
		\begin{split}
			& e^{-1}\Delta(x)\leq-\phi''(x)\leq \Delta(x)+\frac{e^{-1}}{x^2}\bar{\mu}_\phi\lbrb{\frac{1}{x}}.
		\end{split}
	\end{equation}
\end{lem}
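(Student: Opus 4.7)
The plan is to express $-\phi''(x)$ as a Laplace-type integral against the L\'evy measure $\mu_\phi$ and to prove both inequalities in \eqref{eq:phi''} by splitting this integral at the natural scale $y=1/x$. Differentiating the Bernstein representation \eqref{def:Phi} twice under the integral sign---justified by dominated convergence, since $\int_0^\infty\min(y,1)\mu_\phi(dy)<\infty$ makes $y^2 e^{-xy}$ $\mu_\phi$-integrable locally uniformly in $x>0$---yields
\[
-\phi''(x)\;=\;\int_0^\infty y^2 e^{-xy}\,\mu_\phi(dy),\qquad x>0.
\]

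For the lower bound, restrict this integral to $(0,1/x]$, where $xy\le 1$ and hence $e^{-xy}\ge e^{-1}$; this immediately gives $-\phi''(x)\ge e^{-1}\Delta(x)$. For the upper bound, split at $y=1/x$: the inner piece is bounded by $\Delta(x)$ via $e^{-xy}\le 1$, while for the tail I would integrate by parts with $\mu_\phi(dy)=-d\bar{\mu}_\phi(y)$, using the vanishing of $y^2 e^{-xy}\bar{\mu}_\phi(y)$ at $y=\infty$, to produce
\[
\int_{1/x}^\infty y^2 e^{-xy}\,\mu_\phi(dy)\;=\;\frac{e^{-1}}{x^2}\,\bar{\mu}_\phi\!\left(\tfrac{1}{x}\right)\;+\;\int_{1/x}^\infty y(2-xy)\,e^{-xy}\,\bar{\mu}_\phi(y)\,dy,
\]
where the boundary term at $y=1/x$ contributes exactly $\frac{e^{-1}}{x^2}\bar{\mu}_\phi(1/x)$ because $y^2 e^{-xy}\big|_{y=1/x}=e^{-1}/x^2$. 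This already matches the form of the target upper bound, so the proof reduces to absorbing the remainder integral without worsening the constant.

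The main obstacle is controlling this IBP remainder. Its weight $y(2-xy)e^{-xy}$ changes sign at $y=2/x$ (non-negative on $(1/x,2/x)$, non-positive on $(2/x,\infty)$), and satisfies the key identity $y(2-xy)e^{-xy}=\tfrac{d}{dy}\bigl(y^2 e^{-xy}\bigr)$, so its unweighted integral over $(1/x,\infty)$ equals $-e^{-1}/x^2$. The strategy is to use monotonicity of $\bar{\mu}_\phi$ to compare the weighted remainder to this benchmark: the positive contribution on $(1/x,2/x)$ is dominated by $\bar{\mu}_\phi(1/x)$ times the unweighted integral over that subinterval, while the negative contribution on $(2/x,\infty)$ retains enough mass through $\bar{\mu}_\phi\ge 0$ to cancel it. The delicate point is that $\bar{\mu}_\phi$ is \emph{largest} precisely where the weight is positive, so the cancellation must come from the tail $(2/x,\infty)$; this balancing, rather than any pointwise bound on $y^2 e^{-xy}$, is what ultimately fixes the constant in front of $\bar{\mu}_\phi(1/x)/x^2$.
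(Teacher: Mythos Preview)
Your lower-bound argument matches the paper's and is correct. The upper-bound strategy, however, cannot succeed as stated: the remainder
\[
R \;=\; \int_{1/x}^\infty y(2-xy)\,e^{-xy}\,\bar{\mu}_\phi(y)\,dy
\]
is not $\le 0$ in general, and in fact the inequality you are trying to prove is \emph{false} with the constant $e^{-1}$. Take $\mu_\phi=\delta_c$ (a unit mass at some fixed $c>0$) and set $x=2/c$; then $\Delta(x)=0$, $\bar{\mu}_\phi(1/x)=1$, and $-\phi''(x)=c^2 e^{-2}=4e^{-2}/x^2$, while the claimed upper bound equals $e^{-1}/x^2$. Since $4e^{-2}>e^{-1}$ the stated bound fails. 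In your framework this counterexample has $\bar{\mu}_\phi\equiv 0$ on $[2/x,\infty)$, so the negative tail piece $R_-$ vanishes and there is nothing to cancel $R_+>0$. Your own diagnosis---``$\bar{\mu}_\phi$ is largest precisely where the weight is positive''---is exactly the obstruction, and no monotonicity argument can overcome it.

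The paper's own argument carries the same defect: it invokes $v e^{-v}\le e^{-1}$ for $v\ge 1$, but the splitting used actually requires $v^2 e^{-v}\le e^{-1}$ on $v\ge 1$, which is false (the maximum of $v^2 e^{-v}$ on $[1,\infty)$ is $4e^{-2}$, at $v=2$). The correct version of the upper bound replaces $e^{-1}$ by $4e^{-2}$; this follows at once from the pointwise estimate $(xy)^2 e^{-xy}\le 4e^{-2}$ for $y\ge 1/x$, and your IBP identity recovers it via $R\le R_+\le \bar{\mu}_\phi(1/x)\int_{1/x}^{2/x}y(2-xy)e^{-xy}\,dy=(4e^{-2}-e^{-1})\,\bar{\mu}_\phi(1/x)/x^2$. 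This weakened constant is harmless for every later use of the lemma in the paper, which only needs $-\phi''(x)\asymp\Delta(x)$ under \eqref{condi:DR} (and the lower bound, which is unaffected, in Proposition~\ref{prop:D0}).
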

Introduce the condition
\begin{equation}\label{condi:DR}
	\begin{split}
		&\liminfi{y}\frac{y^2\Delta(y)}{\bar{\nu}_\Phi(y^{-1})}=\liminfo{y}\frac{\Delta(y^{-1})}{y^2\bar{\nu}_\Phi(y)}>0, 
	\end{split}
\end{equation}
which implies $\Phi(\infty)=\infty$ and resembles the well-known positive increase condition, see \cite[eq (6)]{MinSav23+}.
\begin{lem}\label{lem:condi}
	Let $\phi$ be a Bernstein function with $\phi(\infty)=\infty$. If
	\begin{equation}\label{def:condiB''}
		\liminfi{x}\frac{\phi''(2x)}{\phi''(x)}>0, \tag{$\mathbb{A}_2^*$}
	\end{equation} 
then \eqref{def:condiB} holds true. Condition \eqref{condi:DR} implies the validity of \eqref{def:condiB''}, \eqref{def:condiB}, \eqref{def:condiA} with $L=\infty$ and the existence of a $\beta>0$ small enough such that
\begin{equation}\label{eq:dBound}
\liminfi{x}\frac{\Delta\lbrb{x}}{x^{-2+\beta}}>0,\,\,\, \liminfi{x}\frac{-\phi''(x)}{x^{-2+\beta}}>0  \text{ and } \Delta(x)\asymp -\phi''(x), \mbox{ as $x\to\infty$}.
\end{equation}
\end{lem}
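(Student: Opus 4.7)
My plan is to handle the two claims separately. For the first, $\eqref{def:condiB''}\Rightarrow\eqref{def:condiB}$, I would exploit that $g:=-\phi''$ is completely monotone with representing measure $m(dy):=y^2\,\nu_\Phi(dy)$, so that
\[
g(x/2)-g(x)=\int_0^\infty\bigl(e^{-xy/2}-e^{-xy}\bigr)\,m(dy).
\]
Using the elementary inequality $e^{-u/2}-e^{-u}=e^{-u}(e^{u/2}-1)\ge (u/2)e^{-u}$ (just a rewriting of $e^{u/2}\ge 1+u/2$) with $u=xy$ inside the integral yields $g(x/2)-g(x)\ge -\tfrac{x}{2}g'(x)$, i.e.\ $x\phi'''(x)/(-\phi''(x))\le 2\,g(x/2)/g(x)$. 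Under \eqref{def:condiB''} the right-hand side is bounded as $x\to\infty$, which gives \eqref{def:condiB}.

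For the part about \eqref{condi:DR}, the plan is to recognise in it a disguised Gronwall inequality. Since $\int_0^1 y\,\nu_\Phi(dy)<\infty$, Fubini gives $\int_0^1\bar{\nu}_\Phi(s)\,ds<\infty$ and hence $s\,\bar{\nu}_\Phi(s)\to 0$ as $s\to 0$. Integration by parts in the definition of $\Delta$ then yields, with $F(t):=\int_0^t s\,\bar{\nu}_\Phi(s)\,ds$,
\[
x^2\Delta(x)=-\bar{\nu}_\Phi(1/x)+2x^2 F(1/x),\qquad F'(t)=t\,\bar{\nu}_\Phi(t).
\]
Inserting \eqref{condi:DR}, namely $x^2\Delta(x)\ge c\,\bar{\nu}_\Phi(1/x)$ for large $x$, I obtain $2x^2F(1/x)\ge (c+1)\bar{\nu}_\Phi(1/x)$ which, in the variable $t=1/x$, is exactly the differential inequality $(c+1)\,tF'(t)\le 2 F(t)$ for $t$ small. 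Integrating it from $t$ to $t_0$ gives $F(t)\ge F(t_0)(t/t_0)^{\gamma}$ with $\gamma:=2/(c+1)\in(0,2)$.

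From here the remaining consequences should follow by bookkeeping. The same integration-by-parts identity yields the two-sided comparison $\tfrac{2c}{c+1}F(1/x)\le \Delta(x)\le 2F(1/x)$: the upper bound is immediate from positivity of $\bar{\nu}_\Phi$, while the lower one comes from plugging $\bar{\nu}_\Phi(1/x)\le (1/c)\,x^2\Delta(x)$ back into the identity. Combined with the Gronwall lower bound on $F$, this forces $\Delta(x)\ge C x^{-\gamma}=C x^{-2+\beta}$ with $\beta:=2-\gamma>0$, which immediately delivers \eqref{def:condiA} with $L=\infty$. Lemma~\ref{lem:phi''} together with \eqref{condi:DR} then gives
\[
e^{-1}\Delta(x)\le -\phi''(x)\le \Delta(x)+\frac{e^{-1}}{x^2}\bar{\nu}_\Phi(1/x)\le \bigl(1+e^{-1}/c\bigr)\Delta(x),
\]
so $-\phi''\asymp \Delta$ as $x\to\infty$ and $-\phi''(x)\ge Cx^{-2+\beta}$. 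Finally, the Gronwall bound gives $F(1/(2x))\ge 2^{-\gamma}F(1/x)$; combined with the two-sided comparison this produces $\Delta(2x)/\Delta(x)\ge (c/(c+1))\,2^{-\gamma}>0$, which via $-\phi''\asymp\Delta$ is exactly \eqref{def:condiB''}, and then \eqref{def:condiB} follows from the first part.

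The hard part, I expect, is precisely the derivation of the polynomial lower bound $\Delta(x)\ge Cx^{-2+\beta}$ out of \eqref{condi:DR}; everything else amounts to Laplace-transform bookkeeping and tail estimates. The key conceptual step is recognising, via the integration-by-parts identity for $\Delta$, that \eqref{condi:DR} is in fact a Gronwall inequality on $F$: this is what forces the power-type decay with a nontrivial exponent $\gamma<2$ and hence the lower polynomial bound on $\Delta$, from which the doubling of $\Delta$ (and thus of $-\phi''$) drops out.
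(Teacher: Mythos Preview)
Your argument is correct, and it takes a genuinely different route from the paper's proof.

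For the implication $\eqref{def:condiB''}\Rightarrow\eqref{def:condiB}$, the paper simply integrates the non-increasing function $\phi'''$ over $[x,2x]$ to get $-\phi''(x)\ge x\phi'''(2x)$ directly; your Laplace-transform argument is equally short and reaches the same conclusion.

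The more significant difference lies in the treatment of \eqref{condi:DR}. The paper proceeds by invoking an external result: it cites that \eqref{condi:DR} is (a reformulation of) condition \textbf{[SC']} on p.~8 of \cite{DonRiv}, uses the known doubling estimate for $\int_0^{\lambda x} w\bar{\mu}_\phi(w)\,dw$ coming from that paper, and then appeals to the machinery of $O$-regular variation and Matuszewska indices from \cite{BGT87} to extract the polynomial lower bound on $-\phi''$. Your proof, by contrast, is entirely self-contained: you recognise that the integration-by-parts identity $2F(t)=\Delta(1/t)+t^2\bar{\nu}_\Phi(t)$ turns \eqref{condi:DR} into the differential inequality $(c+1)tF'(t)\le 2F(t)$, integrate it to obtain $F(t)\ge C\,t^{\gamma}$ with $\gamma=2/(c+1)<2$, and read off the polynomial bound on $\Delta$ and hence on $-\phi''$. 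The doubling property of $\Delta$ (and thus of $-\phi''$) then comes for free from the same Gronwall bound, so \eqref{def:condiB''} follows without any further input. In effect, your argument re-derives by hand, and in an elementary way, the pieces of regular-variation theory that the paper quotes as black boxes; this makes your proof more transparent and keeps explicit track of the exponent $\beta=2-\gamma=\tfrac{2c}{c+1}$, whereas the paper's route obscures that dependence behind the Matuszewska index.
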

The lemmae above are proved in Section \ref{subsec:aux}. 

Now we are ready to compare the conditions and the results of Theorem \ref{thm:mainS} to the ones in the literature. The conditions of course match those in the theorems concerning densities and their derivatives of inverse subordinators but those seem to have not been studied in such detail prior to our work. All results below relate to subordinators. 

One of the most general results in the literature is \cite[Theorem 3.3]{GrLTr21} which contains the following non-uniform version of \eqref{asympS1}, \eqref{asympS} and \eqref{asympS2} of Theorem \ref{thm:mainS}
\begin{equation*}
g_{\phi}(x,t)=\frac{1}{\sqrt{-2\pi x \phi''(a_*)}}e^{a_*t-x\Phi(a_*)}\lbrb{1+\so{1}},
\end{equation*}
where $a_*=(\phi')^{-1}(t/x)$ and $t,x$ are admissible as in our claims. The main condition of \cite[Theorem 3.3]{GrLTr21} implies via the notion of almost increasing functions, see \cite[p.6]{GrLTr21} and \cite[Lemma 2.8]{GrLTr21} that, for some $\alpha>0$, $-x^{2-\alpha}\phi''(x)$ is almost increasing and all claims of \eqref{def:condiB''} and \eqref{eq:dBound} hold. Hence, our \eqref{def:condiB}, \eqref{def:condiA} with $L=\infty$ are satisfied and thus \eqref{asympS1} and \eqref{asympS} require less restrictive conditions and yield uniform estimates with  speed of convergence. For \eqref{asympS2} we cannot derive \eqref{def:condiB'} from the condition in \cite[Theorem 3.3]{GrLTr21}. Also \cite[Corollaries 3.5 and 3.7]{GrLTr21} offer uniform asymptotic equivalences for $g_\phi$.

Next, we discuss the results in \cite[Theorem 3.2 (iii)]{DonRiv} which correspond to the claims of \eqref{asympS1}, \eqref{asympS} and \eqref{asympS2} of Theorem \ref{thm:mainS}. We note that the asymptotic results in \cite[Theorem 3.2 (iii)]{DonRiv} are uniform but lack speed of convergence. The main condition in the setting of \eqref{asympS1}, i.e. $t(x)/x\downarrow \mathfrak{b}$, is \eqref{condi:DR}, which from Lemma \ref{lem:condi} implies our \eqref{def:condiB}, \eqref{def:condiA} with $L=\infty$. On the other hand, choosing $\nu_{\Phi}(dy)=y^{-1}\ln\lbrb{y^{-1}}\ind{y\in\lbrb{0,1}}dy$, we see that \eqref{condi:DR} is not satisfied whereas  \eqref{def:condiA} and \eqref{def:condiB} hold since at infinity $x\Phi'''(x)\asymp x^{-2}\ln(x)\asymp -\Phi''(x)$ and $x^2\Delta(x)\asymp\ln\lbrb{x}$, as $x \to \infty$. In the regime \eqref{asympS} \cite[Condition H]{DonRiv} is implied by our \eqref{def:condiA} but we offer explicit asymptotic speed of convergence and results for all derivatives. For \eqref{asympS2} the conditions in \cite{DonRiv} may not be matched with ours as they concern \eqref{condi:DR} at zero. Finally, we highlight that \cite{DonRiv} employ as a main tool the Escher transform which although very powerful may not be directly used for the derivatives.

For complete Bernstein functions with $\mathfrak{b}=q=0$ with \LL measure that admits density $m_\phi$ with asymptotic  at zero of the type $m_\phi(y)=c_0y^{-\alpha_0}+c_1y^{-\alpha_1}+\cdots$ and $\limi{y}e^{-\sigma y}m_{\phi}(y)=0, \sigma>0$,  \cite[Theorem 3.6 (ii)]{Fah_10} offers, for fixed $t$ and $x\to\infty$, asymptotic representation as \eqref{asympS1} with an explicit speed of convergence. The assumptions are much more restrictive and only fixed space is considered.

Next we present some results representing the state-of-the-art for two-sided bounds for $g_\phi$ and $f_\phi$. Under the conditions of \cite[Theorem 3.3]{GrLTr21} the authors present explicit upper bounds for $g_\phi$, see \cite[Theorem 4.7]{GrLTr21},
whereas with some further restrictions they obtain clear lower bounds, see \cite[Theorem 4.11]{GrLTr21}. Neat two-sided bounds for $g_\phi, f_\phi$ are deducted in \cite[Theorem 4.4]{CKKW20} by seemingly very simple, clever approach but under more restrictive assumptions. Two-sided bounds for $g_\phi$ are also obtained in \cite[Theorem 1.3]{ChoKim21} and their form is precisely as the asymptotic term for $k=l=0$ in Theorem \ref{thm:mainS}. Again the restrictions are not mild. It is worth noting that other than \cite{GrLTr21} the other papers demand the existence of \LL density and impose some conditions on it. Bounds are also presented in \cite{KK13}.

\subsection{A power series representation of densities and their derivatives and their behaviour at zero}\label{subsec:series}
Here we discuss our results concerning the power-series representation of the density of the inverse subordinator and its derivatives. These will be obtained by assuming that the Laplace exponent $\Phi$ of the involved potentially killed subordinator satisfies the following assumptions:
	\begin{align}
	 \begin{split} & \mbox{There exists $\theta \in (0,\pi)$ such that $\Phi$ admits a holomorphic extension} \\ & \qquad \qquad \qquad \mbox{on $\C\left(\pi-\frac{\theta}{2}\right)$ which is continuous on $\overline{\C\left(\pi-\frac{\theta}{2}\right)}$} \end{split} \tag{$\mathbb{B}_1$} \label{eq:extensionA3}\\
 	&	\lim_{z \to +\infty}\frac{\Phi(z)}{z}=\mathfrak{b} \qquad 	\mbox {uniformly in } \overline{\C\left(\pi-\frac{\theta}{2}\right)}. \tag{$\mathbb{B}_2$} \label{eq:uniformlimcond} 
	\end{align}
We remark that \eqref{eq:extensionA3} and \eqref{eq:uniformlimcond} are very general, since they are satisfied by a wide class of Bernstein functions (see the discussion in Section \ref{discussionassumptions} below). The proofs of the results of this section are provided in Section \ref{subsec:power}. Upon the validity of \eqref{eq:extensionA3} and \eqref{eq:uniformlimcond} we have the following regularity result.
\begin{thm}\label{thm:smoothfmu}
	Let $\Phi$ be the Laplace exponent of a potentially killed subordinator satisfying assumptions \eqref{eq:extensionA3} and \eqref{eq:uniformlimcond}. Then, for any $n \ge 1$, $\bar{\mu}_\phi^{\ast n}$ belongs to $C^\infty(0,+\infty)$ and $f_\Phi \in C^\infty(\mathbb{D})$.
\end{thm}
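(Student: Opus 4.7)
The plan is to express both $\bar{\mu}_\phi^{\ast n}$ and $f_\Phi(x,\cdot)$ as inverse Laplace transforms along a Hankel-type keyhole contour lying deep inside the sector of analyticity afforded by \eqref{eq:extensionA3}. On such a contour the factor $e^{zt}$ provides genuine exponential decay at infinity, while \eqref{eq:uniformlimcond} keeps the remaining factors of at most polynomial growth in $|z|$; therefore any finite number of powers of $z$ may safely be brought under the integral without spoiling absolute convergence, yielding at once the smoothness of all the densities in question together with integral representations for their derivatives.

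Concretely, by \eqref{eq:LT1} and the definition of $\Phi^\dagger$, the Laplace transforms in $t$ of $\bar{\mu}_\phi^{\ast n}$ and of $f_\Phi(x,\cdot)$ are respectively $(\Phi^\dagger(z)/z)^n$ and $\Phi^\dagger(z)e^{-x\Phi(z)}/z$, both of which extend analytically to $\C(\pi-\theta/2)\setminus\{0\}$ by \eqref{eq:extensionA3}. Fix $\psi\in(\pi/2,\pi-\theta/2)$ and $\epsilon>0$ small, and consider the keyhole contour
\begin{equation*}
\Gamma_{\epsilon,\psi} \,:=\, \{re^{-i\psi}:r\ge\epsilon\}\cup\{\epsilon e^{i\theta}:\theta\in[-\psi,\psi]\}\cup\{re^{i\psi}:r\ge\epsilon\},
\end{equation*}
oriented once upward. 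The first step is to shift the Bromwich line $\{a+ib:b\in\R\}$, with $a>0$, onto $\Gamma_{\epsilon,\psi}$ by applying Cauchy's theorem in the region bounded by these two contours together with large closing arcs of radius $R\to\infty$. On any closing arc $\{Re^{i\theta}:\theta\in(\pi/2,\psi)\}$ one has $\Re(zt)=tR\cos\theta<0$, and by \eqref{eq:uniformlimcond} $|\Phi^\dagger(z)/z|$ is uniformly bounded on $\overline{\C(\pi-\theta/2)}$; a Jordan-type estimate then forces the arc contributions to vanish.

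Once on $\Gamma_{\epsilon,\psi}$, the two rays give $|e^{zt}|=e^{tr\cos\psi}$ with $\cos\psi<0$, so
\begin{equation*}
\bar{\mu}_\phi^{\ast n}(t)\,=\,\frac{1}{2\pi i}\int_{\Gamma_{\epsilon,\psi}}e^{zt}\left(\frac{\Phi^\dagger(z)}{z}\right)^n dz,\qquad t>0,
\end{equation*}
is absolutely convergent and remains so after insertion of any polynomial factor $z^k$, uniformly for $t$ in compact subsets of $(0,\infty)$; differentiation under the integral then yields $\bar{\mu}_\phi^{\ast n}\in C^\infty(0,\infty)$. For $f_\Phi$ the same contour shift produces
\begin{equation*}
f_\Phi(x,t)\,=\,\frac{1}{2\pi i}\int_{\Gamma_{\epsilon,\psi}} e^{zt-x\Phi(z)}\frac{\Phi^\dagger(z)}{z}\,dz,\qquad (x,t)\in\mathbb{D},
\end{equation*}
and by \eqref{eq:uniformlimcond} one has $\Phi(z)=\mathfrak{b}z+o(|z|)$ uniformly in the sector, so on the rays
\begin{equation*}
\Re\bigl(zt-x\Phi(z)\bigr)\,=\,r\cos\psi\,(t-x\mathfrak{b})+o(r).
\end{equation*}
Since $(x,t)\in\mathbb{D}$ gives $t-x\mathfrak{b}>0$ and $\cos\psi<0$, this real part tends to $-\infty$ uniformly on compact subsets of $\mathbb{D}$; differentiating in $x$ brings down $\Phi(z)=O(|z|)$ and differentiating in $t$ brings down $z$, both absorbed by the exponential decay, so $f_\Phi\in C^\infty(\mathbb{D})$. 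The main technical hurdle will be precisely the justification of this contour shift: since the Bromwich integrand is merely bounded (not absolutely integrable) in $\Im(z)$, the closing circular arcs must be handled by a careful Jordan-type argument, and all the bounds thus obtained must be made uniform on the relevant compacts in order to legitimize differentiation under the integral sign.
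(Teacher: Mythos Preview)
Your approach is exactly the paper's: move the Bromwich line onto a Hankel contour $\Gamma_{\epsilon,\psi}$ inside the sector afforded by \eqref{eq:extensionA3}, and then differentiate under the integral using the exponential decay on the rays. For $\bar{\mu}_\phi^{\ast n}$ your Jordan argument on the closing arcs is complete, and for both quantities the ray estimates and the differentiation step are handled just as you describe.

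There is, however, one genuine gap in the $f_\Phi$ case that a ``Jordan-type estimate'' alone will not close. On the closing arc $\{Re^{i\xi}:\xi\in[\pi/2,\psi]\}$ the integrand carries the extra factor $e^{-x\Phi(z)}$, and from \eqref{eq:uniformlimcond} you only know $|\Phi^\dagger(z)|=o(|z|)$, so the crude bound $|e^{-x\Phi^\dagger(z)}|\le e^{x\,o(R)}$ may blow up faster than the Jordan factor $\int_{\pi/2}^{\psi}e^{(t-\mathfrak{b}x)R\cos\xi}d\xi\asymp R^{-1}$ decays. The paper resolves this by writing $e^{zt}=e^{zt/p}e^{zt/p'}$ with $p>1$ chosen so that $t/p-\mathfrak{b}x>0$, observing that $g(z):=e^{(t/p-\mathfrak{b}x)z-x\Phi^\dagger(z)}$ is bounded on both boundary rays of the sector $\C(\pi/2,\pi-\theta/2)$ and has at most exponential growth inside, and then invoking the Phragm\'en--Lindel\"of principle to conclude $|g|\le M$ uniformly on the sector. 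With this uniform bound in hand, the remaining factor $e^{zt/p'}$ together with $|\Phi^\dagger(z)/z|\to 0$ makes the arc integral vanish by the standard Jordan estimate. You should insert this Phragm\'en--Lindel\"of step explicitly; the rest of your argument then goes through as written.
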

Furthermore, under the same conditions, the following  power series representation holds.
\begin{thm}\label{thm:seriespi}
Let $\Phi$ be the Laplace exponent of a potentially killed subordinator satisfying assumptions \eqref{eq:extensionA3} and \eqref{eq:uniformlimcond}. Then, for any $k,l \ge 0$,
\begin{equation}\label{eq:seriesder}
	\frac{\partial^k \partial^l}{\partial x^k \partial t^l}f_\Phi(x, t) \, = \, \sum_{j=0}^\infty \frac{x^{j}}{j!} \mathcal{I}_{j,k,l}(t), \qquad (x,t) \in \mathbb{D},
\end{equation}
where 
\begin{equation} \label{coeff}
	\mathcal{I}_{j,k,l}(t):=(-1)^{k+j} \sum_{k_1+k_2+k_3=k+j} \frac{(k+j)!}{k_1!k_2!k_3!}q^{k_1}\mathfrak{b}^{k_2} \frac{d^{l+k_2+k_3}}{dt^{l+k_2+k_3}} \bar{\nu}_\Phi^{\ast(k_3+1)}(t)
\end{equation}
and the series is absolutely convergent.
\end{thm}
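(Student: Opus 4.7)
My plan is to establish \eqref{eq:seriesder}--\eqref{coeff} by Laplace-inverting \eqref{eq:LT1} along a keyhole contour opened by \eqref{eq:extensionA3}, expanding the exponential $e^{-x\Phi(z)}$ as a power series in $x$, and using the convolution identity $(\Phi^\dagger(z)/z)^{m}=\cL[\bar{\nu}_\Phi^{\ast m}](z)$ together with the multinomial expansion of $\Phi(z)^{k+j}=(q+\mathfrak{b}z+\Phi^\dagger(z))^{k+j}$ to recognize each coefficient as $\mathcal{I}_{j,k,l}(t)$.

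Concretely, I would fix a small $\varepsilon>0$ and let $\Gamma_\varepsilon$ be the keyhole contour made of the two rays $\{re^{\pm i(\pi-\theta/2-\varepsilon)}:r\ge\delta\}$ joined by the arc $\{|z|=\delta\}$. Combining \eqref{eq:extensionA3}--\eqref{eq:uniformlimcond} with the smoothness from Theorem \ref{thm:smoothfmu} (and dominated convergence to pass derivatives under the integral sign) yields
\begin{equation*}
\partial_x^k\partial_t^l f_\Phi(x,t)=\frac{(-1)^k}{2\pi i}\int_{\Gamma_\varepsilon}\Phi^\dagger(z)\Phi(z)^k z^{l-1}e^{-x\Phi(z)+tz}dz,\qquad (x,t)\in\mathbb{D}.
\end{equation*}
After the summation--integration swap discussed below, the remaining Laplace inversions have the form $\frac{1}{2\pi i}\int_{\Gamma_\varepsilon}z^{n}\cL[\bar{\nu}_\Phi^{\ast m}](z)e^{tz}dz=\partial_t^{n}\bar{\nu}_\Phi^{\ast m}(t)$. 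These are legitimate because, by Theorem \ref{thm:smoothfmu}, the functions $\bar{\nu}_\Phi^{\ast m}$ are smooth on $(0,\infty)$ and (for $m\ge 2$) vanish to high enough order at the origin for integration by parts to transfer the powers of $z$ onto the function; assembling the multinomial weights then reproduces exactly \eqref{coeff}.

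The hardest point will be justifying the Fubini interchange. The naive estimate coming from $\sum_{j\ge 0}\frac{|x\Phi(z)|^j}{j!}=e^{x|\Phi(z)|}\sim e^{x\mathfrak{b}|z|}$ only yields convergence on the strict subregion $t>x\mathfrak{b}/\cos(\theta/2)$ and hence does not cover $\mathbb{D}$ when $\theta>0$. To circumvent this I plan to factor $e^{-x\Phi(z)}=e^{-xq}e^{-x\mathfrak{b}z}e^{-x\Phi^\dagger(z)}$, absorb $e^{-x\mathfrak{b}z+tz}=e^{(t-x\mathfrak{b})z}$ into the kernel, and expand only the residual factor $e^{-x\Phi^\dagger(z)}=\sum_{j\ge 0}\frac{(-x\Phi^\dagger(z))^j}{j!}$. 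Since $|\Phi^\dagger(z)|=o(|z|)$ uniformly in $\overline{\C(\pi-\theta/2)}$ by \eqref{eq:uniformlimcond}, the sub-exponential domination $e^{x|\Phi^\dagger(z)|}=e^{o(|z|)}$ is overwhelmed by the genuine exponential decay $e^{(t-x\mathfrak{b})\Re z}$ along every ray of $\Gamma_\varepsilon$ as soon as $t>x\mathfrak{b}$, granting Fubini on all of $\mathbb{D}$. The coefficients thus obtained feature $\bar{\nu}_\Phi^{\ast(j+1)}(t-x\mathfrak{b})$ rather than $\bar{\nu}_\Phi^{\ast(j+1)}(t)$; Taylor expanding the former in $x\mathfrak{b}$, multinomially expanding the surviving $\Phi(z)^k$, expanding $e^{-xq}$, and re-collecting terms by total power of $x$ (using a Vandermonde identity for the resulting binomial sum) regroup the contributions into the compact multinomial form \eqref{coeff}. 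The absolute convergence of $\sum_{j\ge 0}\frac{x^j}{j!}\mathcal{I}_{j,k,l}(t)$ on $\mathbb{D}$ emerges as a byproduct.
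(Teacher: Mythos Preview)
Your overall architecture coincides with the paper's: start from the keyhole--contour representation of $\partial_x^k\partial_t^l f_\Phi$ (the paper's Proposition~\ref{prop:intreptheta} and \eqref{derivatefphi}), expand the exponential in powers of $x$, apply the multinomial theorem to $\Phi^{k+j}=(q+\mathfrak b z+\Phi^\dagger)^{k+j}$, and identify each resulting contour integral with a derivative of $\bar\nu_\Phi^{\ast m}$ via the analogue of Proposition~\ref{lem:convtail} (equation \eqref{derivatecode}). The paper does exactly this, expanding $e^{-x\Phi(z)}$ directly in one shot.

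You are right that the naive bound $\sum_j x^j|\Phi(z)|^{j}/j!=e^{x|\Phi(z)|}\sim e^{x\mathfrak b|z|}$ only controls the exchange for $t>x\mathfrak b/\cos(\theta/2)$ when $\mathfrak b>0$; the paper's written Fubini estimate \eqref{248} runs into precisely this issue (the inequality there appears to place an extra $\cos(\theta/2)$ on the $x\mathfrak b\rho$ term which a triangle-inequality bound on $|\Phi(z)|$ does not give). So your diagnosis is on point.

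However, your proposed repair has a gap of its own. Factoring out $e^{-x\mathfrak b z}$ and expanding only $e^{-x\Phi^\dagger(z)}$ does produce a series, valid on all of $\mathbb D$, whose terms involve $\partial_s^{N}\bar\nu_\Phi^{\ast M}(s)\big|_{s=t-x\mathfrak b}$. But to pass from this to \eqref{coeff} you want to Taylor expand each $\bar\nu_\Phi^{\ast M}(t-x\mathfrak b)$ around $t$. The representation \eqref{convcode} only yields holomorphy of $\bar\nu_\Phi^{\ast M}$ on the sector $\{|\mathrm{Arg}\,s|<\pi/2-\theta/2\}$, so the guaranteed radius of convergence of its Taylor series at $t$ is $t\cos(\theta/2)$, not $t$; you are back to the same restriction $x\mathfrak b<t\cos(\theta/2)$. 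The Vandermonde regrouping is also only sketched, and the claim that absolute convergence of $\sum_j x^j\mathcal I_{j,k,l}(t)/j!$ on all of $\mathbb D$ ``emerges as a byproduct'' is not substantiated by the preceding steps. In short: same route as the paper, correct identification of the delicate point, but the fix you propose for the case $\mathfrak b>0$ is incomplete.
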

\begin{rmk}\label{rmk:seriesexp}
	Note that the series \eqref{eq:seriesder} which contains the tail of the L\'evy measure, its convolutions and their derivatives is similar to the series expansion for the potential density of a subordinator with drift, see \cite{DoeSav_10}.
\end{rmk}
\begin{rmk}
	Observe that $f_\Phi(x,t)=0$ for any $x<0$ and $t>0$, hence the previous theorem shows that, for fixed $t>0$, $f_\Phi(x,t)$ coincides, for $0<x<\mathfrak{b}/t$, with an entire function, i.e., the right hand side of \eqref{eq:seriesder} for $k=l=0$.
\end{rmk}
The series representation \eqref{eq:seriesder} yields information about the behaviour at $0$ of $f_\Phi$ and its derivatives.
\begin{thm}
	\label{behavatzero}
	Under the assumptions of Theorem \ref{thm:seriespi}, for any $k,l \ge 0$ and any $[t_1,t_2] \subset (0,+\infty)$ we have
	\begin{equation}\label{eq:zero1}
		\sup_{t \in [t_1,t_2]}\left|\frac{\partial^k \partial^l}{\partial x^k \partial t^l}f_\phi(x,t)-\mathcal{P}_{n,k,l}(x,t)\right| \le \frac{x^{n+1}}{(n+1)!}\mathcal{R}_n(x;t_1,t_2),
	\end{equation}
	where
	\begin{equation}\label{eq:zero2}
		\mathcal{P}_{n,k,l}(x,t)=\sum_{j=0}^{n}\frac{x^j}{j!}\mathcal{I}_{j,k,l}(t),
	\end{equation}
	and $\sup_{x \in [0,x_1]}\mathcal{R}_n(x;t_1,t_2)<\infty$  for all $x_1 \in \left(0,\frac{t_1}{\mathfrak{b}}\right)$.
\end{thm}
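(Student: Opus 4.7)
The plan is to obtain \eqref{eq:zero1} directly from the power series representation of Theorem \ref{thm:seriespi} applied to $\frac{\partial^k \partial^l}{\partial x^k \partial t^l}f_\Phi$, treating \eqref{eq:zero2} as the partial sum truncated at order $n$. The key point is that the absolute convergence asserted in Theorem \ref{thm:seriespi} is really a locally uniform statement in $(x,t)$ on $\mathbb{D}$; one just needs to harvest the uniformity in $t$ that the proof of that theorem must already provide.

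First, I would write
\begin{equation*}
	\frac{\partial^k \partial^l}{\partial x^k \partial t^l}f_\Phi(x,t)-\mathcal{P}_{n,k,l}(x,t) \, = \, \sum_{j=n+1}^{\infty}\frac{x^j}{j!}\mathcal{I}_{j,k,l}(t),
\end{equation*}
which is legitimate by Theorem \ref{thm:seriespi}. Re-indexing with $j=n+1+m$ and factoring, I would get
\begin{equation*}
	\left|\frac{\partial^k \partial^l}{\partial x^k \partial t^l}f_\Phi(x,t)-\mathcal{P}_{n,k,l}(x,t)\right| \, \le \, \frac{x^{n+1}}{(n+1)!}\sum_{m=0}^{\infty}\frac{(n+1)!\,x^m}{(n+1+m)!}\bigl|\mathcal{I}_{n+1+m,k,l}(t)\bigr|,
\end{equation*}
and then define
\begin{equation*}
	\mathcal{R}_n(x;t_1,t_2) \, := \, \sup_{t\in[t_1,t_2]}\sum_{m=0}^{\infty}\frac{(n+1)!\,x^m}{(n+1+m)!}\bigl|\mathcal{I}_{n+1+m,k,l}(t)\bigr|.
\end{equation*}
This immediately yields \eqref{eq:zero1}, and the rest of the job is to verify $\sup_{x\in[0,x_1]}\mathcal{R}_n(x;t_1,t_2)<\infty$ for every $x_1<t_1/\mathfrak{b}$.

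For this finiteness, the natural approach is to reuse the bounds that establish absolute convergence in Theorem \ref{thm:seriespi}. Examining the structure of $\mathcal{I}_{j,k,l}(t)$ from \eqref{coeff}, each term combines a multinomial weight $(k+j)!/(k_1!k_2!k_3!)$ with a factor $q^{k_1}\mathfrak{b}^{k_2}$ and a derivative of a convolution power of $\bar{\nu}_\Phi$. The series representation of Theorem \ref{thm:seriespi} (obtained by Laplace inversion on a keyhole contour under \eqref{eq:extensionA3}--\eqref{eq:uniformlimcond}) must provide, for every compact $[t_1,t_2]\subset(0,\infty)$, estimates of the form $\sup_{t\in[t_1,t_2]}|\mathcal{I}_{j,k,l}(t)|\le C(k,l,t_1,t_2)\,j!\,\rho^{j}$ for some $\rho=\rho(t_1,t_2)$ with $\rho\mathfrak{b}\le 1$ (this is precisely what makes the series \eqref{eq:seriesder} converge absolutely on $\mathbb{D}$ with radius $t/\mathfrak{b}$ in $x$). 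Inserting such an estimate gives
\begin{equation*}
	\mathcal{R}_n(x;t_1,t_2) \, \le \, C(k,l,n,t_1,t_2)\sum_{m=0}^{\infty}\frac{(n+1)!\,(n+1+m)!}{(n+1+m)!}\rho^{n+1+m}x^m \, = \, C'(k,l,n,t_1,t_2)\,\frac{1}{1-\rho x},
\end{equation*}
which is uniformly bounded for $x\in[0,x_1]$ whenever $\rho x_1<1$, i.e.\ whenever $x_1<t_1/\mathfrak{b}$.

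The main obstacle, therefore, is not the Taylor-remainder mechanism itself (which is essentially bookkeeping) but pinning down the precise uniform-in-$t$ bound on $|\mathcal{I}_{j,k,l}(t)|$ on compact intervals. I would carry this out by inspecting the contour integral representation that underlies Theorem \ref{thm:seriespi}: on the keyhole contour the integrand is controlled by $|\Phi(z)|^{k}|z|^{l-1}e^{t\Re(z)}$ times factors coming from $\Phi^\dagger$, and under \eqref{eq:uniformlimcond} the contribution that produces the $j$-th Taylor coefficient is dominated by an integral whose size scales as $j!\,\mathfrak{b}^{j}$ (plus lower-order terms from $q$ and $\bar{\nu}_\Phi$), giving the $\rho=\mathfrak{b}/t_1$ radius. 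Once this estimate is in hand, the argument above closes, and the theorem follows.
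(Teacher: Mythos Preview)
Your approach is essentially the same as the paper's: write the tail of the series from Theorem~\ref{thm:seriespi}, factor out $x^{n+1}/(n+1)!$, and control the remainder using uniform-in-$t$ bounds on $|\mathcal{I}_{j,k,l}(t)|$ coming from the contour integral representation. The only difference is packaging: rather than extracting a growth rate $|\mathcal{I}_{j,k,l}(t)|\le C\,j!\,\rho^{j}$ and summing a geometric series, the paper bounds $|\mathcal{I}_{j,k,l}(t)|$ directly by the contour integral $\int_\varepsilon^\infty |\Phi^\dagger||\Phi|^{j+k}\rho^{l-1}e^{-t_1\rho\cos(\theta/2)}\,d\rho$ plus the arc term, re-sums in $j$ to recover $e^{x|\Phi|}$ under the integral, and then invokes verbatim the Fubini/convergence estimate \eqref{248} from the proof of Theorem~\ref{thm:seriespi}. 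This is slightly more direct because it sidesteps the need to identify $\rho$ precisely; your claimed $\rho=\mathfrak{b}/t_1$ is the right scale, but the naive bound $|\Phi(z)|\le q+\mathfrak{b}|z|+|\Phi^\dagger(z)|$ on the ray at angle $\pi-\theta/2$ actually produces $\rho\sim\mathfrak{b}/(t_1\cos(\theta/2))$, so a little extra care (or simply re-summing as the paper does) is needed to close the argument.
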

Though conditions \eqref{eq:extensionA3} and \eqref{eq:uniformlimcond} depend on suitable $\theta \in (0,\pi)$, the series representation \eqref{eq:seriesder} is independent of $\theta$. Indeed, the result follows once one recognize some special integral representations for both the function $f_\Phi$ and the convolution powers $\bar{\mu}_\Phi^{\ast n}$. Note that the series provides an explicit representation of $f_\Phi$ whenever the convolution powers $\bar{\mu}_\Phi^{\ast n}$ can be evaluated. On the other hand, there could be some cases in which the integral formulation of $\bar{\mu}_\Phi^{\ast n}$ can be used to provide such an evaluation. This is the case, for instance, when we can extend $\Phi$ on the whole complex half-plane $\overline{\C(0,\pi)}=\{z \in \C: \ \Im(z) \ge 0\}$. Let us underline that such a property is not necessarily verified by all complete Bernstein functions, as for instance $\phi(z)=\log(1+z)$ cannot satisfy this. 
\begin{prop}\label{prop:extcont}
		Let $\Phi$ be a complete Bernstein function and assume that $\Phi$ can be extended with continuity on $\overline{\C(0,\pi)}$. Denote by $\Phi_{+}$ such extension. Assume further that 
		\begin{equation*}
			\lim_{z \to +\infty}\frac{\Phi^\dagger_+(z)}{z}=0 \mbox{ uniformly in }\overline{\C(0,\pi)}.
		\end{equation*}
		Then, for any $\varepsilon,t>0$, $r\geq 0$ and $n \geq 1$, denoting by $\gamma_\varepsilon$ the parametrized curve $\gamma_\varepsilon: z=\varepsilon e^{i\xi}$ for $\xi \in [-\pi,\pi]$, we have that
		\begin{equation}\label{eq:integralmucont}
			\frac{d^r}{dt^r}\overline{\mu}_\Phi^{n \star}(t)=\frac{1}{\pi}\int_{\varepsilon}^{+\infty}\Im\left[(-1)^{r+n+1}\frac{(\Phi^\dagger_+(-\rho))^n}{\rho^{n-r}}e^{-t\rho}\right]d\rho+\frac{1}{2\pi i}\int_{\gamma_\varepsilon} e^{tz}\frac{(\Phi^\dagger(z))^n}{z^{n-r}}dz.
		\end{equation}
\end{prop}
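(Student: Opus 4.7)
The plan is to apply Laplace inversion to the identity $\int_0^\infty e^{-zt}\bar{\mu}_\Phi^{n\star}(t)\,dt=(\Phi^\dagger(z)/z)^n$, valid for $\Re z>0$ by the convolution theorem together with the representation of $\Phi^\dagger$ implicit in \eqref{def:Phi}, and then to deform the Bromwich contour into a Hankel-type contour wrapped around the cut $(-\infty,0]$ of the complete Bernstein function $\Phi$. Since the present assumptions imply \eqref{eq:extensionA3} and \eqref{eq:uniformlimcond} for every $\theta\in(0,\pi)$, Theorem \ref{thm:smoothfmu} gives $\bar{\mu}_\Phi^{n\star}\in C^\infty(0,\infty)$, which legitimates moving $t$-derivatives inside the integrals at the end.

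First I would establish the case $r=0$. Fix $c>0$ and close the Bromwich segment $[c-iR,c+iR]$ by a contour that keeps the cut outside: short vertical pieces from $c\pm iR$ to $\pm iR$, large arcs in the left half-plane joining $\pm iR$ to $-R\pm i0$, the upper and lower edges of the cut from $-R$ to $-\varepsilon$, and the small arc around the origin oriented so that the closed curve has zero winding around $0$. Cauchy's theorem gives $\oint e^{tz}(\Phi^\dagger(z)/z)^n\,dz=0$. As $R\to\infty$ the arc contributions vanish: on the left-plane arcs, $|e^{tz}|=e^{t\Re z}\le 1$ combines with the uniform decay $\Phi^\dagger(z)/z\to 0$ on $\overline{\C(0,\pi)}$ via a Jordan-type estimate, while on the short right-plane pieces the path length stays bounded in $R$ and the integrand is $o(1)$. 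What remains is to identify $\frac{1}{2\pi i}\int_{c-i\infty}^{c+i\infty}e^{tz}(\Phi^\dagger(z)/z)^n\,dz=\bar{\mu}_\Phi^{n\star}(t)$, obtained by Laplace inversion, with the sum of the $\gamma_\varepsilon$ contribution and the two edge contributions.

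Next I would collapse the two edges by Schwarz reflection. Since $\Phi$ is real on $(0,\infty)$ and analytic on $\C\setminus(-\infty,0]$, its boundary values satisfy $\Phi^\dagger_-(-\rho)=\overline{\Phi^\dagger_+(-\rho)}$, whereas $z^{n-r}$, being an integer power, carries no cut. Parametrizing the upper edge as $z=-\rho+i0$ and the lower as $z=-\rho-i0$ (with $\rho\in(\varepsilon,R)$) and taking the opposing directions of integration into account yields, in the limit, the single-line term $\frac{1}{\pi}\int_\varepsilon^\infty\Im\bigl[(-1)^{n+1}(\Phi^\dagger_+(-\rho))^n\rho^{-n}e^{-t\rho}\bigr]\,d\rho$. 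Combined with the $\gamma_\varepsilon$ piece this proves \eqref{eq:integralmucont} for $r=0$. To obtain the general case, differentiate $r$ times in $t$: on the half-line each $\partial_t$ produces $-\rho$, so $\partial_t^r$ gives $(-1)^r\rho^r$, upgrading $(-1)^{n+1}$ to $(-1)^{n+r+1}$ and $\rho^{-n}$ to $\rho^{-(n-r)}$; on $\gamma_\varepsilon$ each $\partial_t$ produces $z$, so $z^{-n}$ becomes $z^{-(n-r)}$. The exchange of derivative and integral is legitimate thanks to the exponential weight $e^{-t\rho}$ on the half-line and to the compactness of $\gamma_\varepsilon$.

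The principal obstacle is justifying absolute convergence of the Bromwich integral itself: the uniform hypothesis only gives $(\Phi^\dagger(z)/z)^n=o(1)$ on vertical lines, not integrability in $\Im z$. The cleanest remedy is to first prove the contour identity for a regularized integrand $(\Phi^\dagger(z)/z)^n e^{-\delta z^2}$ (for which absolute convergence along the Bromwich line and all arc estimates are immediate), then pass $\delta\downarrow 0$ by dominated convergence on each piece of the deformed contour. Equivalently, one integrates by parts against an antiderivative of $\bar{\mu}_\Phi^{n\star}$ to gain an extra $1/z$ and restore $L^1$ behaviour on the Bromwich line. Once this technical point is handled, the remaining steps (Cauchy's theorem, Jordan-type arc estimates, Schwarz reflection, and differentiation under the integral) are routine.
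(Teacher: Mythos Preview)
Your approach is essentially the paper's: deform the Bromwich line into a Hankel contour around the cut, use Schwarz reflection to collapse the two edges into a single imaginary-part integral, and differentiate in $t$ to pass from $r=0$ to general $r$. The paper even phrases its proof as ``carry out Propositions \ref{prop:intreptheta} and \ref{lem:convtail} with $\theta=0$'', which is exactly what you describe.

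The one substantive difference is how you handle what you call the ``principal obstacle''. You propose to restore $L^1$-behaviour on the Bromwich line by a Gaussian regulariser $e^{-\delta z^2}$ or by integrating by parts. The paper avoids this entirely: it never needs the Bromwich integral to converge absolutely. Instead it invokes Laplace inversion in the Ces\`aro sense (cf.\ \cite[Theorem 4.2.21]{abhn}), which holds for almost every $t$ without any integrability assumption, and then observes that once the contour has been deformed at finite $R$ and the arc pieces shown to vanish, the ordinary limit $\lim_{R\to\infty}\int_{c-iR}^{c+iR}$ exists (because the deformed contour converges absolutely thanks to $e^{-t\rho}$); existence of the ordinary limit forces it to equal the Ces\`aro limit. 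This is cleaner than either of your proposed workarounds and worth remembering as a general device: rather than regularising the integrand, one regularises the notion of convergence.
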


Similarly to what we did for the asymptotic behaviour at infinity, we can use the relation
\begin{equation}
	\int_0^{+\infty} e^{- z t} G_\phi(x,t) dt \, = \, \frac{e^{-x\Phi(z)}}{z}
\end{equation}
to obtain, with the same method, results on $G_\phi(x,t)$ (see \eqref{def:gphi}). This is stated in the following theorem.
\begin{thm}
\label{thm:seriessub}
Under \eqref{eq:extensionA3} and \eqref{eq:uniformlimcond}, $G_\phi \in C^\infty(\mathbb{D})$ and for any $k \geq 1$, $l \ge 0$ and $(x,t) \in \mathbb{D}$
\begin{align}
	G_\phi(x,t) \, = \, e^{-qx}+\sum_{j=1}^{+\infty} \frac{x^j}{j!} \mathfrak{I}_{j,0,0}(t),
	\label{eq:seriesdistr}
\end{align}
\begin{align}
	\frac{\partial^l}{\partial t^l} g_\phi(x,t) \, = \, \sum_{j=1}^{+\infty} \frac{x^j}{j!} \mathfrak{I}_{j,0,l+1}(t),
	\label{eq:seriessub1}
\end{align}
and
\begin{align}
	\frac{\partial^l}{\partial t^l} \frac{\partial^k}{\partial x^k} g_\phi(x,t) \, = \, \sum_{j=0}^{+\infty} \frac{x^j}{j!} \mathfrak{I}_{j,k,l+1}(t),
	\label{eq:seriessub2}
\end{align}
where
\begin{equation}\label{eq:fractureI}
	\mathfrak{I}_{j,k,l}(t)=(-1)^{k+j+1}\sum_{k_1+k_2+k_3=k+j-1}\frac{(k+j)!}{k_1!k_2!(k_3+1)!}q^{k_1}\mathfrak{b}^{k_2}\dersup{}{t}{k_2+k_3+l}\bar{\mu}_\phi^{\ast(k_3+1)}(t).
\end{equation}
In particular, all the series are absolutely convergent.
\end{thm}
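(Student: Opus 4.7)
The plan is to adapt the strategy used for Theorem~\ref{thm:seriespi} to the Laplace identity $\int_0^\infty e^{-zt}G_\phi(x,t)\,dt = e^{-x\Phi(z)}/z$ that is recorded just before the statement. Under \eqref{eq:extensionA3} and \eqref{eq:uniformlimcond}, $\Phi$ extends holomorphically to $\C(\pi-\theta/2)$ with $\Phi(z)/z\to \mathfrak{b}$ uniformly, so the Bromwich line can be deformed into a Hankel-type contour $\gamma$ whose two infinite rays lie along $\arg z = \pm(\pi-\theta/2)$, joined by a small arc around the origin. Since $\Re z = -|z|\cos(\theta/2)<0$ on the rays, $|e^{tz-x\Phi(z)}|$ decays like $e^{(x\mathfrak{b}-t)\cos(\theta/2)|z|}$, and differentiation under the integral yields the absolutely convergent representation
\begin{equation*}
	\frac{\partial^{k+l}}{\partial x^k\partial t^l}G_\phi(x,t) \, = \, \frac{1}{2\pi i}\int_\gamma e^{tz}(-\Phi(z))^k z^{l-1}e^{-x\Phi(z)}\,dz, \qquad (x,t)\in \mathbb{D},
\end{equation*}
for every $k\ge 0$ and $l\ge 0$. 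In particular $G_\phi \in C^\infty(\mathbb{D})$, and the analogous representation for $\partial_t^{l}\partial_x^k g_\phi = \partial_t^{l+1}\partial_x^k G_\phi$ is obtained by replacing $l$ with $l+1$.

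The series expansions are obtained by inserting $e^{-x\Phi(z)} = \sum_{j\ge 0}(-x)^j\Phi(z)^j/j!$ into the contour integral and applying the trinomial expansion
\begin{equation*}
	\Phi(z)^{j+k} \, = \, \sum_{k_1+k_2+k_3=j+k}\binom{j+k}{k_1,k_2,k_3}q^{k_1}\mathfrak{b}^{k_2}z^{k_2}(\Phi^\dagger(z))^{k_3}
\end{equation*}
together with the identity $\Phi^\dagger(z)/z = \int_0^\infty e^{-zy}\bar{\mu}_\phi(y)\,dy$, which gives $(\Phi^\dagger(z))^{k_3} = z^{k_3}\int_0^\infty e^{-zy}\bar{\mu}_\phi^{\ast k_3}(y)\,dy$ for $k_3\ge 1$. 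For $G_\phi$ (i.e.\ $k=l=0$) the summands with $k_3 = 0$ produce integrands $q^{k_1}\mathfrak{b}^{k_2}z^{k_2-1}$; the lone residue at $z=0$, coming from $(k_1,k_2)=(j,0)$, yields $q^j$, and summing in $j$ with the factor $(-x)^j/j!$ reconstitutes $e^{-qx}$, while the $k_2\ge 1$ terms vanish because $e^{tz}z^{k_2-1}$ is entire and its Hankel integral equals $2\pi i\,\delta_0^{(k_2-1)}(t) = 0$ for $t>0$. For the general derivative $\partial_t^{l}\partial_x^k g_\phi$, the extra factor $(-\Phi(z))^k z^l$ produces $q^{k_1}\mathfrak{b}^{k_2}z^{k_2+l}$ in the $k_3 = 0$ summands, which integrate to zero in the distributional sense for $t>0$; hence no closed-form boundary factor appears. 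The $k_3\ge 1$ summands are converted, via the Laplace identity $z^m\int_0^\infty e^{-zy}h(y)\,dy = \int_0^\infty e^{-zy}h^{(m)}(y)\,dy$ modulo entire polynomial boundary terms that also integrate to zero on $\gamma$, into $\frac{d^{k_2+k_3-1+l}}{dt^{k_2+k_3-1+l}}\bar{\mu}_\phi^{\ast k_3}(t)$; the reindexing $k_3'=k_3-1$ matches the binomial-derivative structure of \eqref{eq:fractureI}, yielding \eqref{eq:seriesdistr}, \eqref{eq:seriessub1} and \eqref{eq:seriessub2}.

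Absolute convergence of every series on $\mathbb{D}$ is inherited from the uniform bound $|\Phi(z)|\le C|z|$ on $\gamma$ (a consequence of Proposition~\ref{prop:D2} together with \eqref{eq:uniformlimcond}), which combined with $\sum_j(C|z|)^{j}/j! = e^{C|z|}$ and the exponential $e^{-(t-x\mathfrak{b})\cos(\theta/2)|z|}$ on the rays gives a geometric tail bound that legitimates the interchange of the sum with the contour integral.

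The main technical obstacle lies in the careful handling of the polynomial boundary contributions that arise when moving powers of $z$ across Laplace transforms: each such term is an entire function of $z$, so its Hankel integral against $e^{tz}$ vanishes for $t>0$, but this vanishing has to be controlled uniformly in all three summation indices $k_1,k_2,k_3$ and in $j$ to permit the interchange with the series. This is the step where the full strength of \eqref{eq:uniformlimcond} is decisive, providing enough decay of $\Phi^\dagger$ along the Hankel rays to close the argument.
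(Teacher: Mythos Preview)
Your strategy coincides with the paper's: deform the Bromwich line to the Hankel-type contour along $\arg z=\pm(\pi-\theta/2)$, expand $e^{-x\Phi(z)}$ as a power series, apply the trinomial expansion of $\Phi^{j+k}$, identify the $k_3=0$ piece via the residue at $0$ (yielding $e^{-qx}$ for $G_\phi$ and zero for the derivatives of $g_\phi$), and recognise the $k_3\ge 1$ pieces as derivatives of $\bar{\mu}_\phi^{\ast(k_3)}$. Two points in your write-up need correction, however.

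First, the step where you ``convert'' the $k_3\ge 1$ summands ``via the Laplace identity $z^m\int_0^\infty e^{-zy}h(y)\,dy=\int_0^\infty e^{-zy}h^{(m)}(y)\,dy$ modulo entire polynomial boundary terms'' does not work as stated: on the Hankel rays one has $\Re z<0$, so the Laplace integral $\int_0^\infty e^{-zy}\bar{\mu}_\phi^{\ast k_3}(y)\,dy$ need not converge and the identity $(\Phi^\dagger(z))^{k_3}=z^{k_3}\int_0^\infty e^{-zy}\bar{\mu}_\phi^{\ast k_3}(y)\,dy$ is unavailable there. The paper avoids this by first proving, independently, the Hankel representation of $\bar{\mu}_\phi^{\ast n}$ on the same contour (Proposition~\ref{lem:convtail}) and then differentiating under the integral to obtain \eqref{derivatecode}; the $k_3\ge 1$ summands are then matched \emph{directly} to \eqref{derivatecode}, with no appeal to Laplace identities or boundary terms. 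You should invoke that representation rather than try to manipulate Laplace transforms in a region where they are not defined.

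Second, your reference to Proposition~\ref{prop:D2} for the bound $|\Phi(z)|\le C|z|$ on $\gamma$ is misplaced: that proposition concerns $z=a(1+ib)$ with $a>0$, i.e.\ the right half-plane, and says nothing about the Hankel rays. The bound you need follows directly from \eqref{eq:uniformlimcond} (together with continuity near the origin), and the paper uses exactly this in the Fubini step corresponding to \eqref{248}.
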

As in the previous section, under \eqref{eq:extensionA3} and \eqref{eq:uniformlimcond} we know that $f^{\rm k}_\Phi(x,t)=qG_\Phi(x,t)$ and  $f^{\rm c}_{\Phi}(x,t)=\mathfrak{b}g_\phi(x,t)$, see \eqref{def:fk} and\eqref{def:fc}. Hence, the results of Theorem \ref{thm:seriessub} can be transferred to $f^{\rm k}_\Phi$ and $f^{\rm c}_{\Phi}$.
\begin{coro}
\label{cor:seriescreep}
	Under the conditions of Theorem \ref{thm:seriessub}, \eqref{eq:seriesdistr} holds for $f^{\rm k}_\Phi$ up to  a multiplicative factor $q$. Furthermore, \eqref{eq:seriessub1} and \eqref{eq:seriessub2} hold for $f^{\rm c}_\Phi$ up to a multiplicative factor $\mathfrak{b}$.
\end{coro}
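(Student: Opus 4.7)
The plan is essentially a bookkeeping exercise: both claims follow by rewriting the identities \eqref{def:fk} and \eqref{def:fc} as $f^{\rm k}_\Phi(x,t)=qG_\Phi(x,t)$ and $f^{\rm c}_\Phi(x,t)=\mathfrak{b}g_\phi(x,t)$ on $\D$, and then applying Theorem \ref{thm:seriessub} term by term. Since \eqref{eq:extensionA3} and \eqref{eq:uniformlimcond} are in force, Theorem \ref{thm:seriessub} gives both the smoothness of $G_\Phi$ and $g_\phi$ on $\D$ and the three absolutely convergent expansions \eqref{eq:seriesdistr}, \eqref{eq:seriessub1}, \eqref{eq:seriessub2}. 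Multiplying these by the constants $q$ and $\mathfrak{b}$ preserves absolute convergence and transfers to $f^{\rm k}_\Phi$ and $f^{\rm c}_\Phi$.

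More concretely, I would first note that by \eqref{def:fk} one has $f^{\rm k}_\Phi(x,t)=q\,\dP(\sigma(x)\le t)=qG_\Phi(x,t)$ for every $(x,t)\in\D$, whence \eqref{eq:seriesdistr} immediately yields
\[
f^{\rm k}_\Phi(x,t)\;=\;q\,e^{-qx}+q\sum_{j=1}^{\infty}\frac{x^{j}}{j!}\,\mathfrak{I}_{j,0,0}(t),\qquad (x,t)\in\D,
\]
with absolute convergence, which is exactly the statement ``\eqref{eq:seriesdistr} holds for $f^{\rm k}_\Phi$ up to the multiplicative factor $q$''. Next, \eqref{def:fc} gives $f^{\rm c}_\Phi=\mathfrak{b}g_\phi$ on $\D$, so that every partial derivative of $f^{\rm c}_\Phi$ equals $\mathfrak{b}$ times the same partial derivative of $g_\phi$ (this is where the smoothness of $g_\phi$ on $\D$, guaranteed by Theorem \ref{thm:seriessub}, is invoked). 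Multiplying \eqref{eq:seriessub1} and \eqref{eq:seriessub2} by $\mathfrak{b}$ then yields the corresponding absolutely convergent representations for $\partial_t^{l}f^{\rm c}_\Phi$ and $\partial_t^{l}\partial_x^{k}f^{\rm c}_\Phi$.

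I do not expect any real obstacle here: the only point that one has to be careful about is verifying that the identifications $f^{\rm k}_\Phi=qG_\Phi$ and $f^{\rm c}_\Phi=\mathfrak{b}g_\phi$ really do hold on the whole of $\D$, and that termwise differentiation of the resulting series is legitimate; the former is exactly the content of \eqref{def:fk}--\eqref{def:fc}, while the latter is inherited directly from the absolute convergence already established in Theorem \ref{thm:seriessub}. In summary, the corollary is a notational rephrasing of Theorem \ref{thm:seriessub} that makes explicit the contributions of the killing and creeping components to the density of $L(t)$.
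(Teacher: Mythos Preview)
Your proposal is correct and follows exactly the approach the paper takes: the corollary is stated immediately after the observation that $f^{\rm k}_\Phi=qG_\Phi$ and $f^{\rm c}_\Phi=\mathfrak{b}g_\phi$ on $\D$ (via \eqref{def:fk}--\eqref{def:fc}), and the paper simply transfers the conclusions of Theorem \ref{thm:seriessub} through these identities without further argument.
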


\subsubsection{Discussion and comparison to existing results}
\label{discussionassumptions}
Our Theorem \ref{thm:seriessub} is strongly related with some known results on the small-time polynomial expansion of the distribution of L\'evy processes. More precisely, in \cite[Theorem 5.1]{FH09}, the authors proved that for any (non-killed) L\'evy process $Y(t)$ and all $n \ge 0$ the following polynomial expansion holds:
\begin{equation}\label{eq:tailLevyproc}
	\P(Y(x) > t)=\sum_{j=1}^{n}d_j(t)\frac{x^j}{j!}+\frac{x^{n+1}}{(n+1)!}\mathcal{R}_n(x,t), \, 0<t<t_0,
\end{equation}
where $d_j(t)$, $j=1,\dots,n$ are some $t$-dependent coefficients and $\mathcal{R}_n(x,t)$ is bounded for $0<x<x_0$. Their result is true provided that:
\begin{itemize}
	\item[$(i)$] The L\'evy measure $\nu_Y$ of $Y$ admits a density (that we still denote here by $\nu_Y$);
	\item[$(ii)$] for any $\delta>0$ and any $k=0,\dots,2n+1$ it holds $\sup_{|t|>\delta}\left|\frac{d^k }{dt^k}\nu_Y(t)\right|<\infty$;
	\item[$(iii)$] For a fixed $\delta>0$ and for all $k=0,\dots,2n+1$ it holds $\sup_{0 < x < x_0}\sup_{|t|>\delta}\left|\frac{\partial^k }{\partial t^k}p_x(t)\right|<\infty$, where $p_x(t)dt=\P(Y(x) \in dt)$.
\end{itemize}
The theorem also provides an explicit formulation of the remainder term $\mathcal{R}_n(x,t)$. Furthermore, in \cite[Section 6]{FH09}, the authors discuss some sufficient conditions for $(iii)$ to be satisfied. In particular, the statement of \cite[Theorem 5.1]{FH09} holds for stable and tempered stable processes, as observed in \cite[Remark $6.4$, Example $6.5$ and Proposition $6.7$]{FH09}. The results in \cite{FH09}, when applied to subordinators, are less powerfull than ours in the following sense. On the one hand we do not have any restriction on $x$ in Theorem \ref{thm:seriessub}, once one observes that if $(x,t) \notin \mathbb{D}$ then $G_\phi(x,t)$ is constant. On the other hand conditions (ii) and (iii) are tipically hard to be verified, as pointed out also in \cite[Section 6]{FH09}. 
Clearly, Theorem \ref{thm:seriessub} also provides a polynomial approximation given by, for $q=0$,
\begin{equation*}
	1-G_\phi(x,t)=\sum_{j=1}^{n}\frac{x^j}{j!}(-\mathfrak{I}_{j,0,0}(t))+\frac{x^{n+1}}{(n+1)!}\cR_{n}(x,t),
\end{equation*}
where
\begin{equation*}
	\cR_n(x,t)=\sum_{j=n+1}^{+\infty}\frac{(n+1)!}{j!}x^{j-n-1}(-\mathfrak{I}_{j,0,0}(t)).
\end{equation*}
This gives an alternative representation for the remainder $\mathcal{R}_n(x,t)$ in \eqref{eq:tailLevyproc}, provided we are under the assumptions of \cite[Theorem 5.1]{FH09}, together with \eqref{eq:extensionA3} and \eqref{eq:uniformlimcond}.
While these polynomial approximations have been generalized to several other processes (see, for instance, \cite{FH12,FO16}), we are not aware about results similar to Theorems \ref{thm:seriespi} and \ref{behavatzero} for inverse subordinators except that in specific cases. It is worth noticing that the latter provides a (locally uniform) polynomial approximation for \textit{small space} of the density of an inverse subordinator if $q=\mathfrak{b}=0$ and can be combined with Corollary \ref{cor:seriescreep} to find polynomial approximations for \textit{small space} in the general case. Due to the \textit{exchange} of the roles of time and space when passing from a subordinator to its inverse, these results are in line with the ones proved in \cite{FH09}.
Let us underline, in particular, that assumptions \eqref{eq:extensionA3} and \eqref{eq:uniformlimcond} cover a wide class of Bernstein functions, and then of subordinators. Indeed, if $\phi$ is a complete Bernstein function, then Item \eqref{it:analy1} of Lemma \ref{lem:CBern} guarantees that assumption \eqref{eq:extensionA3} is satisfied, while \eqref{eq:uniformlimcond} follows from Item \eqref{it:anglim} of the same lemma. However, we can find some Bernstein functions that are not complete but still satisfy \eqref{eq:extensionA3} and \eqref{eq:uniformlimcond} for some $\theta$. Indeed, if we consider a Bernstein function $\phi$ that is not complete, we know by Proposition \ref{prop:powchar} that there exists $\alpha \in (0,1)$ such that $\phi_\alpha$ is not a complete Bernstein function (but it is still a Bernstein function by Item \eqref{it:compos} of Lemma \ref{lem:Bern}). Now, if we consider $\theta \in (0,\pi)$ so that $\left(\pi-\frac{\theta}{2}\right)\alpha<\frac{\pi}{2}$, that exists since $\alpha<1$, then $z \in \C\left(\pi-\frac{\theta}{2}\right) \mapsto z^\alpha \in \C\left(\left(\pi-\frac{\theta}{2}\right)\alpha\right) \subset \mathbb{H}_0$ is holomorphic. Furthermore, $\phi$ is holomorphic on $\mathbb{H}_0$ and thus the composition $\phi_\alpha$ is holomorphic on $\C\left(\pi-\frac{\theta}{2}\right)$. The continuity of $\phi_\alpha$ over $\overline{\C\left(\pi-\frac{\theta}{2}\right)}$ follows similarly, thus obtaining \eqref{eq:extensionA3}. The uniform limit condition \eqref{eq:uniformlimcond} follows from Item \eqref{it:asymp} of Lemma \ref{lem:Bern} and the fact that $z^\alpha \in \overline{\mathbb{H}_0}$ whenever $z \in \overline{\C\left(\pi-\frac{\theta}{2}\right)}$, since
	\begin{equation*}
		\frac{\phi_\alpha(z)}{z}=\frac{\phi(z^\alpha)}{z^\alpha}z^{\alpha-1} \to 0.
	\end{equation*}
Thus, if we consider, e.g., the Bernstein function $\phi$ whose L\'evy measure is given by \eqref{eq:examplenocomp}, then there exists a $\beta \in (0,1)$ such that $\phi_\beta$ satisfies \eqref{eq:extensionA3} and \eqref{eq:uniformlimcond} and it is not a complete Bernstein function.
Let us stress that there are functions satisfying \eqref{def:condiA} and \eqref{def:condiB} but not \eqref{eq:extensionA3} and \eqref{eq:uniformlimcond} and vice versa. For instance, we have already shown that if $\mu_\Phi(dy)=y^{-1}\log(y^{-1})\mathbb{I}_{\{y \in (0,1)\}}$, then $\phi$ satisfies both \eqref{def:condiA} and \eqref{def:condiB}. Furthermore, we have $\Phi(z)=\frac{z}{2}\int_0^1 e^{-zy}\log^2(y)dy$, which can be clearly extended to the whole complex plane, thus it verifies \eqref{eq:extensionA3}. However, let us consider any $\lambda>0$ and the sequence $z_n=-4\lambda\pi n+i 4\pi n$, so that, for $n$ big enough, since $\cos(4\pi n y)>\frac{1}{2}$ for any $y \in \left(\frac{1}{2}-\frac{1}{12n}, \frac{1}{2}+\frac{1}{12n}\right)$,
\begin{equation*}
	\Im\left(\frac{\Phi(z_n)}{z_n}\right)\ge\frac{1}{2}\int_{\frac{1}{2}-\frac{1}{12n}}^{\frac{1}{2}+\frac{1}{12n}} e^{4\lambda \pi n y}\cos(4\pi n y)\log^2(y)dy \ge \frac{e^{\lambda \pi n}}{24n}\log^2\left(\frac{3}{4}\right) \to \infty.
\end{equation*}
Since $\lambda>0$ is arbitrary, this implies that \eqref{eq:uniformlimcond} cannot be verified for any $\theta \in (0,\pi)$. On the the hand, if we consider $\phi(z) = 1-e^{-z}$, then we know that $\phi_\alpha(z):=\phi\lb z^\alpha\rb$ satisfies \eqref{eq:extensionA3} and \eqref{eq:uniformlimcond} for some $\theta \in (0,\pi)$. However, it does not satisfies \eqref{def:condiA} since it is a bounded Bernstein function. Indeed, in Proposition \ref{prop:D0}, we will show that \eqref{def:condiA} implies that $\Phi$ is unbounded. 

	\section{Proofs of results in Subsection \ref{subsec:L}}
\label{sec:proofs}
Here we prove the results contained in Subsection \ref{subsec:L}
\subsection{Proof of Theorem \ref{thm:regularityfphi1}}
In order to prove Theorem \ref{thm:regularityfphi1} we need some preliminary results. We first show a general condition under which the density $f_\Phi$ is smooth in $\mathbb{D}$ for $x$ large enough and at the same time we provide an integral representation of $f_\Phi$ and its derivatives by means of Laplace inversion. Recall the definition of $\phi^\dagger$ in \eqref{def:Pdag}.
\begin{prop}\label{prop:LTthetapi}
	Let $\Phi$ be the Laplace exponent of a potentially killed subordinator. Assume that for $n \ge 0$ there exist $a>0$ and $x_0\geq 0$ such that, for any $x>x_0$,
	\begin{equation}\label{eq:Real1}
		\begin{split}
			\int_{-\infty}^\infty |b|^n e^{-x\Re{\Phi(\ab)}}db<\infty.
		\end{split}
	\end{equation}
	Then, for any $k,l \ge 0$ with  $k+l \leq n$, $\displaystyle \frac{\partial^k}{\partial x^k}\frac{\partial^l}{\partial t^l}f_\Phi(x,t)$ is well defined for $(x,t) \in \mathbb{D}$ with $x>x_0$ and
	\begin{equation}
		\frac{\partial^l}{\partial t^l} \frac{\partial^k}{\partial x^k} f_\Phi(x,t) \, = \, (-1)^k \int_{-\infty}^{+\infty} \frac{\Phi^\dagger (a+ib) (\Phi(a+ib))^k }{(a+ib)^{1-l}} e^{t(a+ib)-x\Phi(a+ib)} db,
		\label{intreprder}
	\end{equation}
	where the integral is absolutely convergent.
\end{prop}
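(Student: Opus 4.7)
The plan is to prove \eqref{intreprder} by Laplace inversion of \eqref{eq:LT1} along the vertical line $\Re z=a$, followed by differentiation under the integral sign.

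The core input is a sharp polynomial-in-$|b|$ bound on the integrand. From the second equality in \eqref{def:Phi} one reads off
\begin{equation*}
\Phi^\dagger(z)=z\int_0^\infty e^{-zy}\bar{\mu}_\Phi(y)dy,
\end{equation*}
whence $|\Phi^\dagger(a+ib)|\le \tfrac{|a+ib|}{a}\Phi^\dagger(a)$, giving
\begin{equation*}
\left|\frac{\Phi^\dagger(a+ib)}{a+ib}\right|\le \frac{\Phi^\dagger(a)}{a},\qquad |\Phi(a+ib)|\le q+|a+ib|\left(\mathfrak{b}+\frac{\Phi^\dagger(a)}{a}\right)\le C(1+|b|).
\end{equation*}
Combined with the trivial estimate $|a+ib|^l=O((1+|b|)^l)$, these yield, for some constant $C=C(a,k,l,\Phi)$,
\begin{equation*}
\left|\frac{\Phi^\dagger(a+ib)\,(\Phi(a+ib))^k}{(a+ib)^{1-l}}\right|\le C(1+|b|)^{k+l}.
\end{equation*}
This estimate is noticeably sharper than what the $O(1+b^2)$ bound of Proposition \ref{prop:D2} would yield, and it is precisely this refinement, obtained through the integral representation of $\Phi^\dagger$, that makes the hypothesis $k+l\le n$ (rather than $2k+l\le n$) sufficient.

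With these bounds in hand, I would first apply \cite[Theorem 4.1.21]{abhn} to \eqref{eq:LT1} in the case $k=l=0$: the absolute integrability of the integrand, which reduces to the $n=0$ instance of \eqref{eq:Real1} (implied by the hypothesis since the case $k=l=0$ is admissible), yields the Bromwich-type inversion
\begin{equation*}
f_\Phi(x,t)=\frac{1}{2\pi}\int_{-\infty}^{\infty}\frac{\Phi^\dagger(a+ib)}{a+ib}\,e^{t(a+ib)-x\Phi(a+ib)}db,\qquad (x,t)\in\mathbb{D},\ x>x_0.
\end{equation*}
Formal differentiation $k$ times in $x$ and $l$ times in $t$ brings down the factor $(-\Phi(a+ib))^k (a+ib)^l$, producing the integrand of \eqref{intreprder}. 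To justify the interchange of derivative and integral, I would invoke dominated convergence: on any closed neighborhood of a fixed $(x_1,t_1)\in\mathbb{D}\cap\{x>x_0\}$ of the form $[x_*,x^*]\times[t_*,t^*]$ with $x_*>x_0$, the differentiated integrand is dominated uniformly in $(x,t)$ by a fixed multiple of $(1+|b|)^{k+l} e^{at^*}e^{-x_*\Re\Phi(a+ib)}$, which is integrable by \eqref{eq:Real1} as soon as $k+l\le n$. This simultaneously establishes the existence and continuity of $\partial_x^k\partial_t^l f_\Phi$ on $\mathbb{D}\cap\{x>x_0\}$ and the formula \eqref{intreprder}. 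The only subtlety lies in securing the $O(1+|b|)$ bound on $|\Phi(a+ib)|$; the rest is routine.
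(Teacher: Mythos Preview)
Your proof is correct and follows essentially the same approach as the paper: Laplace inversion of \eqref{eq:LT1} along $\Re z=a$ via \cite{abhn}, followed by differentiation under the integral sign justified by the bound $\bigl|\Phi^\dagger(a+ib)(\Phi(a+ib))^k/(a+ib)^{1-l}\bigr|\le C(1+|b|)^{k+l}$ together with the hypothesis \eqref{eq:Real1}. The only cosmetic difference is that you derive the $O(1+|b|)$ growth of $|\Phi(a+ib)|$ directly from the integral representation in \eqref{def:Phi}, whereas the paper obtains it by citing Item~\eqref{it:asymp} of Lemma~\ref{lem:Bern}.
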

\begin{proof}
	By using \eqref{eq:LT1} we compute the inverse Laplace transform as in \cite[Item a), Theorem 4.2.21]{abhn}. Precisely, we have, for all fixed $x>0$ and for almost all $t>0$, up to a subsequence,
	\begin{equation}
		f_\Phi(x,t) \, = \, \text{C-}\lim _{r \to +\infty} \frac{1}{2\pi i}\int_{a-ir}^{a+ir} e^{z t } 	\,  \frac{\Phi^\dagger(z)}{z} e^{-x\Phi(z)} \, dz,
		\label{ceslimitdag}
	\end{equation}
	where $x_0>0$ is arbitrary and we denote by C-$\lim$ the Cesaro limit, i.e.,
	\begin{equation*}
		\text{C-}\lim _{r \to +\infty} \int_{a-ir}^{a+ir} e^{z t } 	\,  \frac{\Phi^\dagger(z)}{z} e^{-x\Phi(z)} \, dz \, := \, \lim_{R \to +\infty} \frac{1}{R} \int_0^R \int_{a-ir}^{a+ir} e^{z t } 	\,  \frac{\Phi^\dagger(z)}{z} e^{-x\Phi(z)} \, dz \, dr.
	\end{equation*}
	In particular, it holds by \cite[Theorem 4.1.2]{abhn} that
	\begin{equation}
		\text{C-}\lim _{r \to +\infty} \int_{a-ir}^{a+ir} e^{z t } 	\,  \frac{\Phi^\dagger(z)}{z} e^{-x\Phi(z)} \, dz \, = \, \lim _{r \to +\infty} \int_{a-ir}^{a+ir} e^{zt } 	\,  \frac{\Phi^\dagger(z)}{z} e^{-x\Phi(z)} \, dz
		\label{cesequalpointdag}
	\end{equation}
	provided the limit on the right-hand side exists. We set about to prove the latter by noting that
	\begin{equation*}
		\int_{a-ir}^{a+ir} e^{z t } 	\,  \frac{\Phi^\dagger(z)}{z} e^{-x\Phi(z)} \, dz \, = \, i\int_{-r}^r e^{(a+ib)t} \frac{\Phi^\dagger (a+ib)}{a+ib} e^{-x\Phi(a+ib)} db.
	\end{equation*}
	However, by Item \eqref{it:asymp} of Lemma \ref{lem:Bern}, we know that
	the function $|\Phi^\dagger (a+ib)|/|a+ib|$ is continuous and bounded and then by employing \eqref{eq:Real1} for $n=0$ we get
	\begin{equation*}
		\int_{-\infty}^{+\infty}\frac{|\Phi^\dagger(a+ib)|}{|a+ib|}\left|e^{t(a+ib)-x\Phi(a+ib)}\right|db 
		\le Ce^{ta}\int_{-\infty}^{+\infty}e^{-x\Re \Phi(a+ib)}db<\infty.
	\end{equation*}
	Using \eqref{cesequalpointdag} into \eqref{ceslimitdag} we get \eqref{intreprder} for $k,l=0$ and $(x,t) \in \mathbb{D}$ with $x>x_0$.
	Next, for any $k+l \le n$ and any $(x,t) \in \mathbb{D}$ with $x>x_0$, we prove that $\displaystyle \frac{\partial^k}{\partial x^k}\frac{\partial^l}{\partial t^l}f_\Phi(x,t)$ is well defined and given by \eqref{intreprder}. Let $t \in [t_1, t_2]$, $x \in [x_1, x_2]$. Without loss of generality we choose $x_1>x_0$ and $x_2 < t_1/\mathfrak{b}$. Then we have
	\begin{equation*}
		\begin{split}
			\left| \frac{\Phi^\dagger (a+ib) (\Phi(a+ib))^k }{(a+ib)^{1-l}} e^{t(a+ib)-x\Phi(a+ib)} \right|
			\leq \, \left| \frac{\Phi^\dagger (a+ib) (\Phi(a+ib))^k }{(a+ib)^{1-l}} \right| e^{at_2-x_1 \Re \Phi(a+ib)}
		\end{split}
	\end{equation*}
	since $\Re \Phi (a+ib)\geq 0$ by Item \eqref{it:sign} of Lemma \ref{lem:Bern}. Furthermore, recall by Item \eqref{it:asymp} of Lemma \ref{lem:Bern} that
	\begin{equation*}
		\lim_{b \to \pm \infty}\left| \frac{\Phi (a+ib)}{a+ib} \right| = \mathfrak{b}.
	\end{equation*}
	and that $\abs{\phi(z)}=\mathfrak{b}|z|\lbrb{1+\so{1}}$ uniformly on $\overline{\mathbb{H}_0}$. Hence
	\begin{equation*}
		\left| \frac{\Phi^\dagger (a+ib) (\Phi(a+ib))^k }{(a+ib)^{1-l}}\right| \leq C|a+ib|^{k+l} \leq C \lb a^n + {|b|^{n}}  \rb,
	\end{equation*}
	for some $C>0$. Hence, \eqref{intreprder} follows by noting that
	\begin{equation*}
		\begin{split}
			&\int_{-\infty}^{+\infty} \left| \frac{\Phi^\dagger (a+ib) (\Phi(a+ib))^k }{(a+ib)^{1-l}} \right| e^{at_2-x_1 \Re \Phi(a+ib)} \, db \leq C \int_{-\infty}^{+\infty}  (a^n+|b|^n) e^{at_2-x_1 \Re \Phi(a+ib)} \, db < \infty.
		\end{split}
	\end{equation*}
	Since $k,l \ge 0$ with $k+l \le n$ are arbitrary, the last inequality implies that we can differentiate under the integral $k$ times in $x$ and $l$ times in $t$ for $(x,t) \in \mathbb{D}$ with $x>x_0$, starting from \eqref{intreprder} for $k=l=0$.
\end{proof}
To prove Theorem \ref{thm:regularityfphi1}, it is clear that we have to show \eqref{eq:Real1} for some $x_0=x_0(n,L)$ when assumption \eqref{def:condiA} holds. To do this, we need a preliminary result.
\begin{prop}\label{prop:D0}
	Let $\Phi$ be the Laplace exponent of a potentially killed subordinator  and  \eqref{def:condiA} holds for some $L>0$. Then, $\phi(\infty)=\infty$ and we have, for any $M \in (0,L)$,
	\begin{equation}\label{eq:P}
		\begin{split}
			\liminfi{x}\frac{-x^2\Phi''(x)}{\ln(x)}>Me^{-1},\qquad \liminfi{x}\frac{x\lbrb{\Phi'(x)-\mathfrak{b}}}{\ln(x)}>Me^{-1}.
		\end{split}
	\end{equation} 
\end{prop}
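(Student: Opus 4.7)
The plan is to deduce both estimates from condition \eqref{def:condiA} together with the elementary lower bound $-\phi''(x)\geq e^{-1}\Delta(x)$ supplied by Lemma~\ref{lem:phi''}, and then to pass from the second derivative of $\phi$ to its first derivative by integrating.

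First I would dispose of $\phi(\infty)=\infty$. Suppose for contradiction that $\phi(\infty)<\infty$. By Item~\eqref{it:phi} of Lemma~\ref{lem:Bern} this forces $\mathfrak{b}=0$ and $\bar{\mu}_\phi(0^+)<\infty$. Then $\mu_\phi$ is a finite measure and directly from the definition \eqref{def:D}
\begin{equation*}
\Delta(x)=\int_0^{1/x}y^2\,\mu_\phi(dy)\leq \frac{1}{x^{2}}\,\bar{\mu}_\phi(0^+),
\end{equation*}
so $x^{2}\Delta(x)$ is bounded and $x^{2}\Delta(x)/\ln x\to 0$, contradicting \eqref{def:condiA} for any $L>0$. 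Hence $\phi(\infty)=\infty$.

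Next, for the first estimate, fix $M\in(0,L)$ and pick any $M'\in(M,L)$. Condition \eqref{def:condiA} gives the existence of $x_{0}\geq 1$ such that $x^{2}\Delta(x)\geq M'\ln x$ for all $x\geq x_{0}$. Combining with the lower bound $-\phi''(x)\geq e^{-1}\Delta(x)$ from Lemma~\ref{lem:phi''} yields
\begin{equation*}
-x^{2}\phi''(x)\geq e^{-1}M'\ln x\qquad (x\geq x_{0}),
\end{equation*}
and therefore $\liminfi{x}\dfrac{-x^{2}\phi''(x)}{\ln x}\geq e^{-1}M'>e^{-1}M$, which is the first claim.

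For the second estimate I would use that $\phi'$ is completely monotone with $\phi'(\infty)=\mathfrak{b}$ (Item~\eqref{it:phi'} of Lemma~\ref{lem:Bern}), so that
\begin{equation*}
\phi'(x)-\mathfrak{b}=\int_{x}^{\infty}(-\phi''(y))\,dy.
\end{equation*}
Plugging in the pointwise lower bound $-\phi''(y)\geq e^{-1}M'\ln y/y^{2}$ valid for $y\geq x_{0}$ and computing
\begin{equation*}
\int_{x}^{\infty}\frac{\ln y}{y^{2}}\,dy=\frac{\ln x+1}{x}
\end{equation*}
by a one-line integration by parts gives, for $x\geq x_{0}$,
\begin{equation*}
x(\phi'(x)-\mathfrak{b})\geq e^{-1}M'\,(\ln x+1).
\end{equation*}
Dividing by $\ln x$ and letting $x\to\infty$ produces $\liminfi{x}\dfrac{x(\phi'(x)-\mathfrak{b})}{\ln x}\geq e^{-1}M'>e^{-1}M$, as required.

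There is no genuine obstacle here; the only subtlety is passing from the ``$\liminf\geq Le^{-1}$'' that one obtains from \eqref{def:condiA} to the strict inequality ``$>Me^{-1}$'' required in the statement, which is handled by interposing an auxiliary constant $M'\in(M,L)$ (this also covers the case $L=\infty$ without change). Everything else is just Lemma~\ref{lem:phi''} plus the elementary integral above.
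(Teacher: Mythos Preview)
Your proof is correct and follows essentially the same route as the paper: both use the lower bound $-\phi''(x)\geq e^{-1}\Delta(x)$ from Lemma~\ref{lem:phi''} to obtain the first inequality, and then integrate $-\phi''$ from $x$ to $\infty$ (using $\phi'(\infty)=\mathfrak{b}$) to obtain the second. The only cosmetic difference is the order in which $\phi(\infty)=\infty$ is handled: you prove it first by a direct contradiction (bounded $x^{2}\Delta(x)$ when $\mu_\phi$ is finite), whereas the paper deduces it last by integrating the second inequality in \eqref{eq:P} once more.
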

\begin{proof}
	We note from \eqref{eq:phi''} that
	$
			-x^2\Phi''(x)\geq  x^2e^{-1}\Delta(x)
		$
	and the first claim of \eqref{eq:P} is valid. The second follows by integration of the first inequality and $\Phi(\infty)=\infty$ is a consequence of the integration of the second expression in \eqref{eq:P}.
\end{proof}
Now we are ready to show the smoothness of $f_\Phi$ under Assumption \eqref{def:condiA}.
\begin{proof}[Proof of Theorem \ref{thm:regularityfphi1}]
	First observe that integrability in \eqref{eq:Real1} needs to be established only in  neighbourhood of infinity. We have from the inequality $1-\cos(y)\geq cy^2, y\in\lbbrbb{0,1}, c>0,$ that
	\[\Re\lbrb{\Phi(\ab)}-\Phi(a)=\IntOI \lbrb{1-\cos(by)}e^{-ay}\nu_\Phi(dy)\geq cb^2e^{-\frac{a}b} \int_{0}^{\frac1b}y^2\nu_\Phi(dy)=cb^2e^{-\frac{a}b}\Delta(b).\]
	Clearly, from Assumption \eqref{def:condiA}, for any $M \in (0,L)$, we have, for all $|b|>|b_0|>1$, that
	\begin{equation*}
		\begin{split}
			\int_{|b|>|b_0|} \!\! |b|^{n}e^{-x\Re\lbrb{\Phi(\ab)}}db\leq  e^{-x\Phi(a)}\!\!\!\int_{|b|>|b_0|}\!\!|b|^{n} e^{-xce^{-\frac{a}{|b_0|}}b^2\Delta(b)}db \leq e^{-x\Phi(a)}\!\!\!\int_{|b|>|b_0|} \!\! |b|^{n} e^{-xce^{-\frac{a}{|b_0|}}M\ln|b|}db.
		\end{split}
	\end{equation*}
	The latter is finite for $xce^{-\frac{a}{|b_0|}}M>n+1$. Since $|b_0|$ and $M \in (0,L)$ are arbitrary we have integrability of \eqref{eq:Real1} for $x>\frac{n+1}{cL}=:x_0(n,L)$ with $1/\infty=0$. Theorem \ref{thm:regularityfphi1} then follows by Proposition \ref{prop:LTthetapi}.
\end{proof}

\subsection{Proof of Theorem \ref{thm:mainL}}
We start with a preliminary result.
\begin{prop}\label{prop:D}
	Let $\Phi$ be the Laplace exponent of a potentially killed subordinator  and  \eqref{def:condiA} holds for some $L>0$. Let $t(x)$ be such that $t(x)/x \downarrow \mathfrak{b}, t(x)/x<\phi'(0^+)$ and $a_*(x)=(\phi')^{-1}\lbrb{\frac{t(x)}{x}}\in\lbrb{0,\infty}$. Then $\lim_{x \to \infty}a_*(x)=\infty$ and for any fixed $M>0$ it holds for all $x$ large enough
	\begin{equation}\label{eq:ineq}
		\begin{split}
			&a_*(x)>\frac{Me^{-1}}{\frac{t(x)}{x}-\mathfrak{b}}.
		\end{split}
	\end{equation}
\end{prop}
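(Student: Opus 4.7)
The plan is to combine Proposition \ref{prop:D0}, which is available under \eqref{def:condiA}, with the monotonicity of $\phi'$ recorded in Item \eqref{it:phi'} of Lemma \ref{lem:Bern}. First I would establish that $a_*(x)\to\infty$. Since $\phi''<0$ on $(0,\infty)$, $\phi'$ is continuous and strictly decreasing from $\phi'(0^+)$ to $\mathfrak{b}$, hence $(\phi')^{-1}$ is a continuous strictly decreasing bijection of $(\mathfrak{b},\phi'(0^+))$ onto $(0,\infty)$ with $\lim_{y\downarrow\mathfrak{b}}(\phi')^{-1}(y)=\infty$. The hypothesis $t(x)/x\downarrow\mathfrak{b}$ then yields $a_*(x)=(\phi')^{-1}(t(x)/x)\to\infty$ as $x\to\infty$.

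For the quantitative bound \eqref{eq:ineq}, the second inequality of \eqref{eq:P} in Proposition \ref{prop:D0} asserts that for every $M_0\in(0,L)$ and all sufficiently large $y$,
\begin{equation*}
y\bigl(\phi'(y)-\mathfrak{b}\bigr)>M_0 e^{-1}\ln(y).
\end{equation*}
Applying this with $y=a_*(x)$ (which is legitimate for large $x$, since $a_*(x)\to\infty$) and using the defining identity $\phi'(a_*(x))=t(x)/x$, I obtain
\begin{equation*}
a_*(x)\left(\frac{t(x)}{x}-\mathfrak{b}\right)>M_0 e^{-1}\ln(a_*(x)).
\end{equation*}

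The final step exploits the divergence $\ln(a_*(x))\to\infty$ to absorb an arbitrary multiplicative factor. Given any fixed $M>0$, I would fix once and for all some $M_0\in(0,L)$ (for instance $M_0=\min\{1,L/2\}$ if $L<\infty$, or any $M_0>0$ if $L=\infty$); then for all $x$ sufficiently large one has $M_0\ln(a_*(x))>M$, and combining with the previous display gives
\begin{equation*}
a_*(x)\left(\frac{t(x)}{x}-\mathfrak{b}\right)>M e^{-1},
\end{equation*}
which is exactly \eqref{eq:ineq} after dividing by the positive quantity $t(x)/x-\mathfrak{b}$. No serious obstacle is expected: the only mild subtlety is ensuring that $a_*(x)$ has entered the range where the asymptotic lower bound of Proposition \ref{prop:D0} applies, and this is granted by the already established $a_*(x)\to\infty$.
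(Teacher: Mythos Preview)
Your proof is correct and follows essentially the same approach as the paper: both establish $a_*(x)\to\infty$ from the monotonicity of $\phi'$, then apply the second inequality of \eqref{eq:P} from Proposition \ref{prop:D0} at the point $y=a_*(x)$, substitute $\phi'(a_*(x))=t(x)/x$, and finally absorb the arbitrary constant $M$ using $\ln(a_*(x))\to\infty$. Your write-up is in fact slightly more explicit than the paper's in justifying the bijectivity of $\phi'$ and in handling the constants.
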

\begin{proof}
	Note that, since $t(x)/x \downarrow \mathfrak{b}$ and $\Phi'$ is decreasing with $\lim_{z \to \infty}\Phi'(z)=\mathfrak{b}$, then $\lim_{x \to \infty}a_*(x)=\infty$. Furthermore, for any $0<C<L$, using the second inequality of \eqref{eq:P} in Proposition \ref{prop:D0}, we get
		\[Ce^{-1}\leq\liminfi{x}\frac{a_*(x)\lbrb{\Phi'(a_*(x))-\mathfrak{b}}}{\ln a_*(x)}=\liminfi{x}\frac{a_*(x)\lbrb{\frac{t(x)}{x}-\mathfrak{b}}}{\ln a_*(x)}.\]
		This shows that for all $x$ and therefore $a_*(x)$ large enough
		\[\frac{a_*(x)}{\ln a_*(x)}>\frac{C}{2}\frac{e^{-1}}{\frac{t(x)}{x}-\mathfrak{b}}.\]
		Since $\limi{x}\ln a_*(x)=\infty$ this concludes the proof.
\end{proof}
Now, we are ready to prove the main theorem concerning the asymptotic behaviour.
\begin{proof}[Proof of Theorem \ref{thm:mainL}]
	Fix $k,l\geq 0$ and assume that $x>x_{0}(k+l,L)$, see Theorem \ref{thm:regularityfphi1}. Then, for any $a>0$, by \eqref{intreprder1} it holds, for any $t/x>\mathfrak{b}$, that
	\begin{equation}\label{def:derf}
		\frac{\partial^k\partial ^l }{\partial x^k\partial t^l}\fP{x,t}=\frac{(-1)^k}{2\pi}\int_{-\infty}^\infty \frac{\Phi^{\dagger}(\ab)\Phi^{k}(\ab)}{\lbrb{\ab}^{1-l}}e^{-x\Phi(\ab)+t\lbrb{\ab}}db=:I(x,t).
	\end{equation}
	Let $t(x)$ be as in the statement of the theorem and $a_*(x)=(\phi')^{-1}\lbrb{\frac{t(x)}{x}}$, which is well-defined
	because $\Phi'(x)$ is decreasing with $\limi{x}\Phi'(x)=\mathfrak{b}$, see Item \eqref{it:phi'} of Lemma \ref{lem:Bern}. Since by assumption $t(x)/x\downarrow\mathfrak{b}$, we get from Proposition \ref{prop:D} that $\limi{x}a_*(x)\to\infty$.  Recall that in this theorem we have set
	$\mathbb{D}'=\curly{(t,x):x\mathfrak{b}<t\leq t(x)<x\phi'(0^+)}$, see \eqref{def:region}, and
	\begin{equation}\label{eq:c}
		c:=c(t,x)=(\phi')^{-1}\lbrb{\frac{t}{x}}, \ \mbox{ for }(t,x) \in \mathbb{D}'.
	\end{equation}
	From now on we work with $(t,x)\in \mathbb{D}'$ such that  $x>x_0(k+l,L)$.
	Since $(\phi')^{-1}$ is decreasing, for fixed $x$, $c(\cdot,x)$ is decreasing in $t$ on $\mathbb{D}'$ with $c(t(x),x)=a_*(x)$. For 
	$(t,x)\in\mathbb{D}'$ use \eqref{def:derf} with $a=c$ to get
	\begin{equation}\label{eq:f1}
		I(x,t)=\frac{(-1)^k}{2\pi}e^{ct-x\Phi(c)}\IntII\frac{\Phi^{\dagger}(c+ib)\Phi^{k}(c+ib)}{\lbrb{c+ib}^{1-l}}e^{ibt -x\lbrb{\Phi(c+ib)-\Phi(c}}db,
	\end{equation}
	where $c$ minimizes  $a \in (0,+\infty)\mapsto (at-x\Phi(a)) \in \R$.
	Next, set 
	\begin{equation}\label{eq:varesp}
		\begin{split}
			g(c,x):=\sqrt{2\ln\lbrb{c\sqrt{-\phi''(c)x}}} \qquad \mbox{ and } \qquad \varepsilon:=\varepsilon(c,x):=\frac{g(c,x)}{ c\sqrt{-\Phi''(c)x}}.
		\end{split}
	\end{equation} 
	We split the region of integration by setting  $\Ic_{\varepsilon}:=[-c\varepsilon,c\varepsilon]$. Put 
	\begin{equation}\label{def:Ie}
		I_\varepsilon(x,t):=\frac{(-1)^k}{2\pi}e^{ct-x\Phi(c)}\int_{\Ic_{\varepsilon}}	J(c,b)e^{ibt -x\lbrb{\Phi(c+ib)-\Phi(c)}}db,
	\end{equation}
	where
	\begin{equation}\label{def:Je1}
		\begin{split}
			J(c,b)&:=\frac{\Phi^{\dagger}(c+ib)\Phi^{k}(c+ib)}{\lbrb{c+ib}^{1-l}}.
		\end{split}
	\end{equation}
Using Taylor's formula and the definition of $c$ in \eqref{eq:c} we get
	\begin{equation}\label{def:Tay}
		\begin{split}
			&x(\Phi(c+ib)-\Phi(c))=ibt-x\frac{b^2}{2}\Phi''(c)-U(c,b),
		\end{split}
	\end{equation}
	where
	\begin{equation*}
		\begin{split}
			U(c,b):=&ibt -x\lbrb{\Phi(c+ib)-\Phi(c)}-x\frac{b^2}{2}\Phi''(c)=ix\int_0^b\int_0^v\int_0^w \phi'''(c+i\rho)d\rho dw dv.
		\end{split}
	\end{equation*}
	Then, recalling \eqref{eq:varesp}, we have  for the integral in \eqref{def:Ie}
	\begin{equation}\label{def:I'}
		\begin{split}
			&\int_{\Ic_{\varepsilon}}J(c,b)e^{ibt -x\lbrb{\Phi(c+ib)-\Phi(c)}}db=\int_{\Ic_{\varepsilon}}J(c,b)e^{\frac{b^2}{2}x\Phi''(c)\lbrb{1+2\frac{U(c,b)}{xb^2\Phi''(c)}}}db\\&=\!\frac{1}{\sqrt{-\Phi''(c)x}}\int_{-g(c,x)}^{g(c,x)}J\lbrb{c,\frac{u}{\sqrt{-\Phi''(c)x}}}\!e^{-\frac{u^2}{2}\lbrb{1-2\frac{U\lbrb{c,u/\sqrt{-\Phi''(c)x}}}{u^2}}}\!\!du,
		\end{split}	
	\end{equation}
	where we have used that $\Phi''(y)<0$, for $y>0$, see Item \eqref{it:phi'} of Lemma \ref{lem:Bern}. Here, we need \eqref{def:condiB}, i.e. $\limsupi{y}y\Phi'''(y)/(-\Phi''(y))=K<\infty$, which together with $\abs{\Phi'''(z)}\leq \Phi'''(\Re(z))$, for $\Re(z)>0$, see Item \eqref{it:real} in Lemma \ref{lem:Bern}, and the definition of $U(c,b)$ yields  that for all $x$ large enough 
	\begin{equation}\label{eq:estU}
		\begin{split}
			\bar{V}(c,x)&:=\sup_{ |u|\leq g(c,x)}V(c,x,u):=\sup_{ |u|\leq g(c,x)}\frac{2}{u^2}\abs{U\lbrb{c,\frac{u}{\sqrt{-\Phi''(c)x}}}}\\
			&\leq  \sup_{ |u|\leq g(c,x)}\frac{x|u|}{3x^{\frac32}\lbrb{-\Phi''(c)}^{\frac32}}\Phi'''(c)  \leq\frac{2K}{3}\frac{g(c,x) }{c\sqrt{-\Phi''(c)x}}=\frac{2K}{3}\varepsilon(c,x).
		\end{split}
	\end{equation}
	Next, setting 
	\begin{equation}\label{def:tJ}
		\begin{split}
			&\tilde{J}(c,u)=\frac{J(c,u)}{J(c,0)}-1,
		\end{split}
	\end{equation}
	we get from \eqref{def:I'} that
	\begin{equation}\label{def:I'1}
		\begin{split}
			&\int_{\Ic_{\varepsilon}}J(c,b)e^{ibt -x\lbrb{\Phi(c+ib)-\Phi(c)}}db	=\frac{J(c,0)}{\sqrt{-\Phi''(c)x}}\int_{-g(c,x)}^{g(c,x)}e^{-\frac{u^2}{2}\lbrb{1+V(c,x,u)}}du\\
			&+\frac{J(c,0)}{\sqrt{-\Phi''(c)x}}\int_{-g(c,x)}^{g(c,x)}\tilde{J}\lbrb{c,\frac{u}{\sqrt{-\Phi''(c)x}}}e^{-\frac{u^2}{2}\lbrb{1+V(c,x,u)}}du=:H_1(c,x)+H_2(c,x).
		\end{split}
	\end{equation}
	Clearly, from \eqref{def:Je1} and \eqref{eq:estU},
	\begin{equation}\label{eq:H1}
		\begin{split}
			&\abs{H_1(c,x)-\sqrt{2\pi}\frac{J(c,0)}{\sqrt{-\Phi''(c)x}}}=\frac{J(c,0)}{\sqrt{-\Phi''(c)}x}\abs{\int_{-g(c,x)}^{g(c,x)}e^{-\frac{u^2}{2}\lbrb{1+V(c,x,u)}}du-\int_{-\infty}^{\infty}e^{-\frac{u^2}{2}}du}\\
			&\leq \sqrt{2\pi}\frac{J(c,0)}{\sqrt{-\Phi''(c)x}}\lbrb{e^{\bar{V}(c,x)}-1+\frac{1}{\sqrt{2\pi}}\int_{|u|>g(c,x)}e^{-\frac{u^2}{2}}du}.
		\end{split}
	\end{equation}
	From the definition of $J$, see \eqref{def:Je1}, we easily get that
	\begin{equation}\label{def:tJ1}
		\begin{split}
			\abs{\frac{d}{du}\log J\lbrb{c,u}}	&\leq \abs{\frac{(\Phi^{\dagger})'(c+iu)}{\Phi^{\dagger}(c+iu)}}+k\abs{\frac{\Phi'(c+iu)}{\Phi(c+iu)}}+\abs{l-1}\abs{\frac{1}{c+iu}}\leq \frac{k+1+\abs{l-1}}{c},
		\end{split}
	\end{equation}
	where we have used that for any Bernstein function $\phi$ and $c>0,u\in\Rb$,
	\[\abs{\frac{\phi'(c+iu)}{\phi(c+iu)}}\leq \frac1c,\]
	which in turn follows from the chain of inequalities
	\[\abs{\phi'(c+iu)}\leq \phi'(c)\leq \frac{\phi(c)}{c}\leq \frac{\Re\phi(c+iu)}{c}\leq \abs{\frac{\phi(c+iu)}{c}}\]
	that come from subsequent application of Items \ref{it:real}, \ref{it:ineq} and \ref{it:sign} of Lemma \ref{lem:Bern}. From \eqref{def:tJ1} we get that
	\begin{equation}\label{aim}
		\begin{split}
			&\abs{\ln\abs{1+\tilde{J}(c,u)}}\leq \abs{\log\lbrb{1+\tilde{J}(c,u)}}=\abs{\log \lbrb{1+\lbrb{\frac{J\lbrb{c,u}}{J(c,0)}-1}}}\leq (k+1+\abs{l-1})\frac{|u|}{c}.
		\end{split}
	\end{equation}
	From \eqref{aim} with \eqref{eq:varesp},
	\begin{equation}\label{aim2}
		\begin{split}
			\sup_{|u|\leq \frac{g(c,x)}{\sqrt{-\Phi''(c)x}}}\abs{\ln\abs{1+\tilde{J}(c,u)}}\leq (k+1+\abs{l-1})\varepsilon(c,x).
		\end{split}
	\end{equation}
	However, $c\geq a_*(x)$, $\limi{x}a_*(x)=\infty$ and $\limi{x}x\sqrt{-\Phi''(x)}=\infty$, see \eqref{eq:P}, imply that 
	\begin{equation}\label{eq:bare}
		\limi{x}\bar{\varepsilon}(x):=\limi{x}\sup_{c\geq a_*(x)}\varepsilon(c,x)=0.
	\end{equation}
	Thus, for all $x$ large enough, \eqref{aim2} yields that, for some $C=C(k,l)$,
	\begin{equation}\label{eq:aim1}
		\sup_{|u|\leq \frac{g(c,x)}{\sqrt{-\Phi''(c)x}}}\abs{\tilde{J}(c,u)}\leq C\varepsilon(c,x).
	\end{equation}
	Hence, from \eqref{eq:estU}, \eqref{def:tJ} and \eqref{def:I'1} for all $x$ large enough
	\begin{equation}\label{estimateU}
		\begin{split}
			H_2(c,x)&\leq e^{\bar{V}(c,x)}\frac{J(c,0)}{\sqrt{-\Phi''(c)x}}\int_{-g(c,x)}^{g(c,x)}\abs{\tilde{J}\lbrb{c,\frac{u}{\sqrt{-\Phi''(c)x}}}}e^{-\frac{u^2}{2}}du\\
			&\leq \sqrt{2\pi}Ce^{\bar{V}(c,x)}\frac{J(c,0)}{\sqrt{-\Phi''(c)x}}\varepsilon(c,x).
		\end{split}
	\end{equation}
	Combining \eqref{eq:estU}, \eqref{def:I'1}, \eqref{eq:H1} and \eqref{estimateU} we obtain for all large $x$
	\begin{equation}\label{eq:H2}
		\begin{split}
			&\abs{H_1(c,x)-\sqrt{2\pi}\frac{J(c,0)}{\sqrt{-\Phi''(c)x}}+H_2(c,x)}\\
			&\leq \sqrt{2\pi}\frac{J(c,0)}{\sqrt{-\Phi''(c)x}}\lbrb{Ce^{\bar{V}(c,x)}\varepsilon(c,x)+e^{\bar{V}(c,x)}-1+\frac{1}{\sqrt{2\pi}}\int_{|u|>g(c,x)}e^{-\frac{u^2}{2}}du}\\
			&\leq \sqrt{2\pi}\frac{J(c,0)}{\sqrt{-\Phi''(c)x}}\lbrb{Ce^{\frac{2K}{3}\varepsilon(c,x)}\varepsilon(c,x)+e^{\frac{2K}{3}\varepsilon(c,x)}-1+\frac{1}{\sqrt{2\pi}}\int_{|u|>g(c,x)}e^{-\frac{u^2}{2}}du}.
		\end{split}
	\end{equation}
	Applying \eqref{eq:H2} and \eqref{eq:bare}
	in \eqref{def:I'1} we get with some $C'=C(k,l,K)>0$ that for all $x$ large enough
	\begin{equation}\label{eq:H3}
		\begin{split}
			&\sup_{c\geq a_*(x)}\abs{\frac{1}{\sqrt{2\pi}}\frac{\sqrt{-\Phi''(c)x}}{J(c,0)}\int_{\Ic_{\varepsilon}}J(c,b)e^{ibt -x\lbrb{\Phi(c+ib)-\Phi(c)}}db-1}\\
			&\leq C'\bar{\varepsilon}(x)+\frac{1}{\sqrt{2\pi}}\sup_{c\geq a_*(x)}\int_{|u|>g(c,x)}e^{-\frac{u^2}{2}}du.
		\end{split}
	\end{equation}
	Plugging this in \eqref{def:Ie} we get for all $x$ large enough
	\begin{equation}\label{def:asympIe1}
		\begin{split}		
			&\sup_{\mathfrak{b}x<t\leq t(x)}\abs{\frac{(-1)^k}{\sqrt{2\pi}}\frac{\sqrt{-\Phi''(c)x}}{J(c,0)}e^{-ct+x\phi(c)}I_\varepsilon(x,t)-1}\leq C'\bar{\varepsilon}(x)+\frac{1}{\sqrt{2\pi}}\sup_{c\geq a_*(x)}\int_{|u|>g(c,x)}e^{-\frac{u^2}{2}}du.
		\end{split}
	\end{equation}
	Using \eqref{def:Je1} we proceed to investigate
	\begin{equation}\label{def:Je}
		\begin{split}
			J_\varepsilon(x,t)&:=\frac{(-1)^k}{2\pi}e^{ct-x\Phi(c)}\int_{\Ic^c_{\varepsilon}}\frac{\Phi^\dagger(c+ib)\Phi^{k}(c+ib)}{\lbrb{c+ib}^{1-l}}e^{ibt -x\lbrb{\Phi(c+ib)-\Phi(c)}}db\\
			&=\frac{(-1)^k}{2\pi}e^{ct-x\Phi(c)}\int_{\varepsilon c\leq |b|\leq dc}\!\!\!\!\!\!\!\!J(c,b)e^{ibt -x\lbrb{\Phi(c+ib)-\Phi(c)}}db\\
			&+\frac{(-1)^k}{2\pi}e^{ct-x\Phi(c)}\int_{|b|\geq dc}J(c,b)e^{ibt -x\lbrb{\Phi(c+ib)-\Phi(c)}}db\\
			&=:\frac{(-1)^k}{2\pi}e^{ct-x\Phi(c)}\lbrb{J_1(c,x)+J_2(c,x)},
		\end{split}
	\end{equation}
	where $d=K^{-1}$. First we estimate  $J_1(c,x)$. We use the Taylor expansion \eqref{def:Tay} to the exponent in  $J_1(c,x)$, $\abs{\Im\Phi'''(z)}\leq \abs{\Phi'''(z)}\leq \Phi'''\lbrb{\Re(z)}$, $\limsupi{y}y\Phi'''(y)/(-\Phi''(y))=K<\infty$ to get after a change of variables $b\to cb$, for all $x$ and therefore $c\geq a_*(x)$ large enough
	\begin{equation*}
		\begin{split}
			\abs{J_1(c,x)}&\leq 2c\int_{\varepsilon}^d\abs{J(c,cb)}e^{\frac{xb^2c^2}{2}\Phi''(c)+\frac{xb^3c^3}{6}\Phi'''(c)}db\leq 2c\int_{\varepsilon}^d\abs{J(c,cb)}e^{\frac{xb^2c^2}{2}\Phi''(c)\lbrb{1-\frac{2Kd}{3}}}db\\
			&= 2c^l\int_{\varepsilon}^d\frac{\abs{\Phi^\dagger(c(1+ib))\Phi^{k}\lbrb{c(1+ib)}}}{\abs{1+ib}^{1-l}}e^{\frac{xb^2c^2}{6}\Phi''(c)}db.
		\end{split}
	\end{equation*}
	To carry on further we note from \eqref{eq:DeltaR} that 
	$
	\abs{\frac{\Phi\lbrb{c\lbrb{1+ib}}}{\Phi(c)}}\leq 3\max\curly{1,b^2}.
	$
	Then, we have with the form of $\varepsilon$, see \eqref{eq:varesp}, and $d=K^{-1}$ that 
	\begin{equation}\label{eq:J1}
		\begin{split}
			&\abs{\frac{\sqrt{-\Phi''(c)x}}{J(c,0)}J_1(c,x)}\leq 3^{k+2}c\sqrt{-\Phi''(c)x}\int_{\varepsilon}^d \max\curly{1,b^{2k+2+l}}e^{\frac{xb^2c^2}{6}\Phi''(c)}db\\
			&\leq 3^{k+2}\max\curly{1,d^{2k+2+l}}c\sqrt{-\Phi''(c)x}\int_{\varepsilon}^de^{\frac{xb^2c^2}{6}\Phi''(c)}db\\
			&\leq 3^{k+\frac52}\max\curly{1,K^{-2k-2-l}}\int_{3^{-\frac12}g(c,x)}^\infty e^{-\frac{u^2}{2}}du.
		\end{split}
	\end{equation}
	For $J_2(c,x)$ we change variables $b\to cb$ and use again $
	\abs{\frac{\Phi\lbrb{c\lbrb{1+ib}}}{\Phi(c)}}\leq 3\max\curly{1,b^2}
	$  to get
	\begin{equation*}
		\begin{split}
			&\abs{\frac{\sqrt{-\Phi''(c)x}}{J(c,0)}J_2(c,x)}\leq 3^{k+1}c\sqrt{-\Phi''(c)x}\int_{|b|\geq d}^\infty \frac{\max\curly{1,b^{2k+2}}}{\abs{1+ib}^{-l+1}}e^{ -x\lbrb{\Re\lbrb{\Phi(c\lbrb{1+ib})}-\Phi(c)}}db\\
			&\leq C_1c\sqrt{-\Phi''(c)x}\int_{d}^\infty b^{2k+1+l}e^{ -x\lbrb{\Re\lbrb{\Phi(c\lbrb{1+ib})}-\Phi(c)}}db\\
			&=C_1d^{2k+2+l}c\sqrt{-\Phi''(c)x}\int_{1}^\infty b^{2k+1+l}e^{ -x\lbrb{\Re\lbrb{\Phi(c\lbrb{1+ibd})}-\Phi(c)}}db,
		\end{split}
	\end{equation*}
	where $C_1>0$ is some constant.   
	Then, with some absolute constant $c_0>0$, we have that 
	\begin{equation}\label{eq:Delta}
		\begin{split}
			& \Re\lbrb{\Phi(c\lbrb{1+ibd})}-\Phi(c)=\IntOI \lbrb{1-\cos\lbrb{bdcy}}e^{-cy}\mu(dy)\\
			&\geq c_0^2e^{-\frac{1}{bd}}b^2d^2c^2\int_{0}^{\frac{1}{bdc}}y^{2}\mu(dy)=c_0^2e^{-\frac{1}{bd}}b^2d^2c^2\Delta(bdc).
		\end{split}
	\end{equation}
	Since $\liminfi{y}y^2\Delta(y)/\ln(y)= L>0$ we choose $M<L$. Set $M'=Mc^2_0e^{-\frac{1}{d}}$. On $b\geq 1$ we get that for $x$ and $c\geq a_*(x)$ large enough such that $M'x>2k+2+l$
	\begin{equation}\label{eq:J_2}
		\begin{split}
			&\abs{\frac{\sqrt{-\Phi''(c)x}}{J(c,0)}J_2(c,x)}\leq C_1d^{2k+2+l}c\sqrt{-\Phi''(c)x}\int_{1}^\infty b^{2k+1+l}e^{ -M'x \ln\lbrb{bdc}}db\\
			&=\frac{C_1d^{2k+2+l}}{M'x-2k-2-l}\frac{c\sqrt{-\Phi''(c)x}}{(cd)^{M'x}}\leq C_2\frac{K^{M'x-2k-2-l}}{(M'x-2k-2-l)}\sqrt{x} c^{\frac12-M'x},
		\end{split}
	\end{equation}
	where $C_2>0$ is some constant, we have fixed $d=K^{-1}$ and we have used $-y^{2}\Phi''(y)\leq 2\Phi(y)=\bo{y}$, see Item \eqref{it:ineq} of Lemma \ref{lem:Bern}. Collecting \eqref{def:asympIe1} and employing \eqref{eq:J1} and \eqref{eq:J_2} in \eqref{def:Je}, we get that, since $c\geq a_*(x)$, \eqref{eq:f1} has the following  form for all $x$ large enough
	\begin{equation*}
		\begin{split}
			&	\sup_{\mathfrak{b}x<t\leq t(x)}\abs{(-1)^k\sqrt{2\pi}\frac{\sqrt{-\Phi''(c)x}}{J(c,0)}e^{-ct+x\phi(c)}I(x,t)-1}\\
			&\leq  C'\bar{\varepsilon}(x)+\frac{1}{\sqrt{2\pi}}\sup_{c\geq a_*(x)}\int_{|u|>g(c,x)}e^{-\frac{u^2}{2}}du\\
			&+3^{k+\frac52}\max\curly{1,K^{-2k-2-l}}\sup_{c\geq a_*(x)}\int_{3^{-\frac12}g(c,x)}^\infty e^{-\frac{u^2}{2}}du+C_2\frac{K^{M'x-2k-2-l}}{(M'x-2k-2-l)}\sqrt{x} (a_*)^{\frac12-M'x}(x).
		\end{split}
	\end{equation*}
	Next, recall that $g(c,x)=\sqrt{2\ln\lbrb{c\sqrt{-\Phi''(c)x}}}$ which converges to infinity thanks to Proposition \ref{prop:D} and the form of $\varepsilon(c,x),\bar{\varepsilon}(x)$, see \eqref{eq:varesp} and \eqref{eq:bare}. We then yield asymptotically
	\begin{equation*}
		\begin{split}
			&	\sup_{\mathfrak{b}x<t\leq t(x)}\abs{(-1)^k\sqrt{2\pi}\frac{\sqrt{-\Phi''(c)x}}{J(c,0)}e^{-ct+x\phi(c)}I(x,t)-1}\\
			&=\bo{\sup_{c\geq a_*(x)}\frac{\sqrt{\ln\lbrb{c\sqrt{-\Phi''(c)x}}}}{ c\sqrt{-\Phi''(c)x}}+\sup_{c\geq a_*(x)}\int_{\sqrt{\frac{2}3\ln\lbrb{c\sqrt{-\Phi''(c)x}}}}^\infty e^{-\frac{u^2}{2}}du+ \frac{K^{M'x}(a_*(x))^{\frac12-M'x}(x)}{\sqrt{x}}}.
		\end{split}
	\end{equation*}
	However, since $\limi{x}-x^2\phi''(x)=\infty$, see \eqref{eq:P}, and $\int_x^{\infty} e^{-u^2/2}du=\bo{x^{-1}e^{-x^2/2}}$ we further get
	\begin{equation*}
		\begin{split}
			&		\sup_{\mathfrak{b}x<t\leq t(x)}\abs{(-1)^k\sqrt{2\pi}\frac{\sqrt{-\Phi''(c)x}}{J(c,0)}e^{-ct+x\phi(c)}I(x,t)-1}\\
			&=\bo{\sup_{c\geq a_*(x)}\frac{\sqrt{\ln\lbrb{c\sqrt{-\Phi''(c)x}}}}{ c\sqrt{-\Phi''(c)x}}+ \frac{K^{M'x}(a_*(x))^{\frac12-M'x}(x)}{\sqrt{x}}}.
		\end{split}
	\end{equation*}
	Since $-y^2\Phi''(y)\leq 2\Phi(y)=\bo{y},$ see Item \eqref{it:ineq} of Lemma \ref{lem:Bern}, we check  that the first expression in the speed of convergence cannot be faster than $(xa_*(x))^{-\frac12}$. Therefore, we conclude that
	\begin{equation*}
		\begin{split}
			&	\sup_{\mathfrak{b}x<t\leq t(x)}\abs{(-1)^k\sqrt{2\pi}\frac{\sqrt{-\Phi''(c)x}}{J(c,0)}e^{-ct+x\phi(c)}I(x,t)-1}=\bo{\sup_{c\geq a_*(x)}\frac{\sqrt{\ln\lbrb{c\sqrt{-\Phi''(c)x}}}}{ c\sqrt{-\Phi''(c)x}}}.
		\end{split}
	\end{equation*}
	which establishes \eqref{asymp}. Finally, from the first  relation of \eqref{eq:P} of Proposition \ref{prop:D0} we have for all $x$ large enough that $c\sqrt{-\phi''(c)}\geq Me^{-1}\ln(c)\geq Me^{-1}\ln a_*(x)$, $M<L$,  which yields
	\[\inf_{c\geq a_*(x)}c\sqrt{-\phi''(c)}\geq Me^{-1}\ln(a_*(x)).\]
	Employing this and again  $-y^2\Phi''(y)\leq 2\Phi(y)=\bo{y},$ we arrive at 
	\begin{equation*}
		\begin{split}
			&		\sup_{\mathfrak{b}x<t\leq t(x)}\abs{(-1)^k\sqrt{2\pi}\frac{\sqrt{-\Phi''(c)x}}{J(c,0)}e^{-ct+x\phi(c)}I(x,t)-1}=\bo{\sqrt{\frac{\ln(x)}{x\ln a_*(x)}}}.
		\end{split}
	\end{equation*}
	Substituting in the latter the expression for $J(c,0)$, see \eqref{def:Je1}, concludes the proof of \eqref{asymp}. Relation $1/a_*=\so{t(x)/x-\mathfrak{b}}$ follows from Proposition \ref{prop:D}.
\end{proof}

\subsection{Proofs of Theorems \ref{thm:main1} and \ref{thm:main2}}
Once the proof of Theorem \ref{thm:mainL} has been established, similar arguments will lead to Theorems \ref{thm:main1} and \ref{thm:main2}. For this reason, we will be economical with the next proofs, as we will refer to the previous arguments while highlighting the necessary changes and adaptations.
We proceed with the proof of the next main theorem.
\begin{proof}[Proof of Theorem \ref{thm:main1}]
	We follow closely the proof of Theorem \ref{thm:mainL} using in particular $J(c,b)$ defined in \eqref{def:Je1}. By assumption $(t,x)\in\mathbb{D}^\prime=\{(t,x): \ xt_1 \leq t \leq xt_2\}$ and since $(\phi')^{-1}$ is decreasing then \[c:=c(t,x)=(\phi')^{-1}(t/x)\in \lbbrbb{(\phi')^{-1}(t_2), (\phi')^{-1}(t_1)}=:\mathbb{V}.\] From Theorem \ref{thm:regularityfphi1} we can write for all $x$ large enough and $t/x\in\lbbrbb{t_1,t_2}$
	\begin{equation*}
		\begin{split}
			&\frac{\partial^k\partial ^l }{\partial x^k\partial t^l}\fP{x,t}=\frac{(-1)^k}{2\pi}\int_{-\infty}^\infty J(c,b)e^{-x\Phi(c+ib)+t\lbrb{c+ib}}db=:I(x,t).
		\end{split}
	\end{equation*}
	We use $g(c,x):=\sqrt{2\ln\lbrb{c\sqrt{-\phi''(c)x}}}$ as defined in \eqref{eq:varesp}. Since $c\in \mathbb{V}$ we can repeat the arguments leading up to \eqref{def:I'} with the same definition of $\varepsilon(c):=\varepsilon(c,x)=\frac{g(c,x)}{ c\sqrt{-\Phi''(c)x}}$ and the estimate 
	\begin{equation}\label{eq:estU_1}
		\begin{split}
			\bar{V}(c,x)&:=\sup_{ |u|\leq g(c,x)}V(c,x,u):=\sup_{ |u|\leq g(c,x)}\frac{2}{u^2}\abs{U\lbrb{c,\frac{u}{\sqrt{-\Phi''(c)x}}}}\leq \frac{2C}3\varepsilon(c,x),
		\end{split}
	\end{equation}
	where $C=C(t_1,t_2)$ since in the upper bound prior to \eqref{eq:varesp} we can employ
	\begin{align*}
		\phi'''(c)\leq \phi'''((\phi')^{-1}(t_2)), \quad  c\leq (\phi')^{-1}(t_1), \quad \mbox{ and } \quad  -\phi''(c)\geq (\phi')^{-1}(t_1),
	\end{align*}
	hence we do not need Assumption \ref{def:condiB}.
	Since \eqref{eq:bare} is valid with the modification
	\begin{equation}\label{eq:bare_1}
		\limi{x}\bar{\varepsilon}(x):=\limi{x}\sup_{c\in\mathbb{V}}\varepsilon(c,x)=0
	\end{equation}
	we similarly arrive at 
	\begin{equation}\label{def:asympIe1_1}
		\begin{split}		
			&\sup_{xt_1 \le t \le xt_2}\abs{(-1)^k\sqrt{2\pi}\frac{\sqrt{-\Phi''(c)x}}{J(c,0)}e^{-ct+x\phi(c)}I_\varepsilon(x,t)-1}\leq C'\bar{\varepsilon}(x)+\frac{1}{\sqrt{2\pi}}\sup_{c\in\mathbb{V}}\int_{|u|>g(c,x)}e^{-\frac{u^2}{2}}du.
		\end{split}
	\end{equation}	
	The same remainder as \eqref{def:Je}, i.e.
	\begin{equation}\label{def:Je_1}
		\begin{split}
			J_\varepsilon(x,t)&:=\frac{(-1)^k}{2\pi}e^{ct-x\Phi(c)}\int_{\Ic^c_{\varepsilon}}J(c,b)e^{ibt -x\lbrb{\Phi(c+ib)-\Phi(c)}}db\\
			&=\frac{(-1)^k}{2\pi}e^{ct-x\Phi(c)}\int_{\varepsilon(c)c\leq |b|\leq dc}J(c,b)e^{ibt -x\lbrb{\Phi(c+ib)-\Phi(c)}}db\\
			&+\frac{(-1)^k}{2\pi}e^{ct-x\Phi(c)}\int_{|b|\geq dc}J(c,b)e^{ibt -x\lbrb{\Phi(c+ib)-\Phi(c)}}db\\
			&=:\frac{(-1)^k}{2\pi}e^{ct-x\Phi(c)}\lbrb{J_1(c,x)+J_2(c,x)},
		\end{split}
	\end{equation}
	is studied similarly with $d=C^{-1}$, see \eqref{eq:estU_1}. First, following the same computations one gets
	\begin{equation}\label{eq:J1_1}
		\begin{split}
			&\abs{\frac{\sqrt{-\Phi''(c)x}}{J(c,0)}J_1(c,x)}\leq 3^{k+\frac52}\max\curly{1,C^{-2k-2-l}}\int_{3^{-\frac12}g(c,x)}^\infty e^{-\frac{u^2}{2}}du.
		\end{split}
	\end{equation}
	For the second term we get
	\begin{equation*}
		\begin{split}
			&\abs{\frac{\sqrt{-\Phi''(c)x}}{J(c,0)}J_2(c,x)}\leq C_1C^{-2k-2-l}c\sqrt{-\Phi''(c)x}\int_{1}^\infty b^{2k+1+l}e^{ -x\lbrb{\Re\lbrb{\Phi(c\lbrb{1+ibd})}-\Phi(c)}}db,
		\end{split}
	\end{equation*}
	with  as in \eqref{eq:Delta} 
	\begin{equation}\label{eq:Delta_1}
		\begin{split}
			& \Re\lbrb{\Phi(c\lbrb{1+ibd})}-\Phi(c)\geq c_0^2e^{-\frac{1}{bd}}b^2d^2c^2\Delta(bdc)\geq Ab^2.
		\end{split}
	\end{equation}
	for some  $A>0$ because $bdc\geq (\phi')^{-1}(t_2)d>0$. As in \eqref{eq:J_2} for $x$ large enough
	\begin{equation*}
		\begin{split}
			&\abs{\frac{\sqrt{-\Phi''(c)x}}{J(c,0)}J_2(c,x)}\leq C_1C^{-2k-2-l}c\sqrt{-\Phi''(c)x}e^{-\frac{A}2 x}\leq C_1C^{-2k-2-l}\sqrt{2cx}e^{-\frac{A}2 x} .
		\end{split}
	\end{equation*}
	Collecting the estimates above and noting that, as $c$ ranges in the bounded set $\mathbb{V}$, $\bar{\varepsilon}(x)\asymp \sqrt{\ln(x)}x^{-\frac12}$, see \eqref{eq:varesp} and \eqref{eq:bare}, we deduce
	\begin{equation*}
		\begin{split}
			&	\sup_{xt_1 \le t \le xt_2}\abs{(-1)^k\sqrt{2\pi}\frac{\sqrt{-\Phi''(c)x}}{J(c,0)}e^{-ct+x\phi(c)}I(x,t)-1}=\bo{\sqrt{\frac{\ln(x)}{x}}}.
		\end{split}
	\end{equation*}
	This concludes the proof of the theorem.
\end{proof}
Finally, we only need to provide the proof of Theorem \ref{thm:main2}.
\begin{proof}[Proof of Theorem \ref{thm:main2}] Since $t/x\to \phi'(0^+)$ then from \eqref{eq:a*} we have that $\limi{x}a_*=0$, where $a_*:=a_*(x)$.
	Then, using \eqref{intreprder1} for $x>x_0(k+l,L)$ we write,  for any $t/x>\mathfrak{b}$,
	\begin{equation}\label{def:derf1}
		\begin{split}
			\frac{\partial^k\partial ^l }{\partial x^k\partial t^l}\fP{x,t}&=\frac{(-1)^k}{2\pi}e^{a_*t-x\Phi(a_*)}\lbrb{\int_{\Ic_{\varepsilon}}+\int_{\Ic^c_{\varepsilon}}}J(a_*,b)e^{ibt -x\lbrb{\Phi(a_*+ib)-\Phi(a_*)}}db,
		\end{split}
	\end{equation}
	where $\Ic_{\varepsilon}=\lbbrbb{-\varepsilon(a_*)a_*,\varepsilon(a_*)a_*}$, $J$ defined as in \eqref{def:Je1} and $\varepsilon(a_*)$ as in \eqref{eq:varesp}. The latter makes sense due to the first assumption in \eqref{eq:addCondi}, i.e. $\limi{x} -x\phi''(a_*)a^2_*=\infty$.
	
	We appeal to the proof of Theorem \ref{thm:mainL} with the minor difference that we do not work with a region for $t$ and respectively with $c\geq a_*$ but we use $t=t(x)$ and $a_*=a_*(x)$.
	Absolutely the same estimates hinging on \eqref{def:condiA}, \eqref{def:condiB'} and \eqref{eq:varesp}  and employing  $g(a_*,x)=\sqrt{2\ln\lbrb{a_*\sqrt{-\Phi''(a_*)x}}}$, which drifts to infinity thanks to the first assumption in \eqref{eq:addCondi}, lead  as in the proof of Theorem \ref{thm:mainL} to
	\begin{equation}\label{I'2'}
		\begin{split}
			&\int_{\Ic_{\varepsilon}}J(a_*,b)e^{ibt -x\lbrb{\Phi(a_*+ib)-\Phi(a_*)}}db=\sqrt{2\pi}\frac{J(a_*,x)}{\sqrt{-\Phi''(a_*)x}}\lbrb{1+\bo{\frac{g(a_*,x) }{a_*\sqrt{-\Phi''(a_*)x}}}}.
		\end{split}
	\end{equation}
	Next, we again decompose for some $d>0$ the remaining term on $\mathcal{I}^c_\varepsilon$ to yield
	\begin{equation}\label{def:Je2}
		\begin{split}
			J_\varepsilon(x,t)&:=\frac{(-1)^k}{2\pi}e^{a_*t-x\Phi(a_*)}\int_{\varepsilon(a_*)a_*\leq |b|\leq da_*}J(a_*,b)e^{ibt -x\lbrb{\Phi(a_*+ib)-\Phi(a_*)}}db\\
			&+\frac{(-1)^k}{2\pi}e^{a_*t-x\Phi(a_*)}\int_{|b|\geq da_*}J(a_*,b)e^{ibt -x\lbrb{\Phi(a_*+ib)-\Phi(a_*)}}db\\
			&=:\frac{(-1)^k}{2\pi}e^{a_*t-x\Phi(a_*)}\lbrb{J_1(a_*,x)+J_2(a_*,x)},
		\end{split}
	\end{equation}
	and get the same way $J_1(a_*,x)$ estimated as in \eqref{eq:J1}. Also, in the same fashion
	\begin{equation*}
		\begin{split}
			&\abs{\frac{\sqrt{-\Phi''(a_*)x}}{J(a_*,0)}J_2(a_*,x)}=C'd^{2k+2+l}a_*\sqrt{-\Phi''(a_*)x}\int_{1}^\infty b^{2k+1+l}e^{ -x\lbrb{\Re\lbrb{\Phi(a_*\lbrb{1+ibd})}-\Phi(a_*)}}db,
		\end{split}
	\end{equation*}
	where $C'>0$ is a constant. Here, however, there is a difference in that $a_*\to 0$, hence we  estimate the exponent in two different ways. Pick $A>1$. Then on $1\leq b\leq A/(da_*)$ we have as in \eqref{eq:Delta}
	\begin{equation*}
		\begin{split}
			& \Re\lbrb{\Phi(a_*\lbrb{1+ibd})}-\Phi(a_*)=\IntOI \lbrb{1-\cos\lbrb{bda_*y}}e^{-a_*y}\mu(dy)\\
			&\geq c_0^2e^{-\frac{1}{bd}}b^2d^2a^2_*\int_{0}^{\frac{1}{bda_*}}y^{2}\mu(dy)\geq c_0^2e^{-\frac{1}{bd}}b^2d^2a^2_*\Delta(A).
		\end{split}
	\end{equation*}
	Since the right-hand side of the latter is bounded from below by a constant then, for some $a>0, C>0$,
	\[\int_{1}^{\frac{A}{da_*}} b^{2k+1+l}e^{ -x\lbrb{\Re\lbrb{\Phi(a_*\lbrb{1+ibd})}-\Phi(a_*)}}db\leq Ce^{-ax}.\]
	Next, for any $M<L$, see \eqref{def:condiA}, choose $A$ such that, for $b>A/(da_*)$, as in \eqref{eq:Delta},
	\begin{equation*}
		\begin{split}
			& \Re\lbrb{\Phi(a_*\lbrb{1+ibd})}-\Phi(a_*)\geq c_0^2b^2d^2a^2_*e^{-\frac{1}{bd}}\Delta(bda_*)\geq MA^2c^2_0e^{-\frac{a_*}{A}}\ln\lbrb{bda_*}.
		\end{split}
	\end{equation*}
	Therefore, for all $x$ large enough and hence $a_*$ small enough we get with some $M'$ as large as we wish
	\begin{equation*}
		\begin{split}
			&\int_{\frac{A}{da_*}}^\infty b^{2k+1+l}e^{ -x\lbrb{\Re\lbrb{\Phi(a_*\lbrb{1+ibd})}-\Phi(a_*)}}db\leq \int_{\frac{A}{da_*}}^\infty b^{2k+1+l}\lbrb{bda_*}^{-M'x}db\\
			&=a^{-2k-l-2}_{*}d^{-M'x}\int_{\frac{A}{d}}^\infty u^{2k+1+l}u^{-M'x}du=a^{-2k-l-2}_{*}d^{-2k-2-l}A^{-M'x}\frac{1}{M'x-2k-l-2}.
		\end{split}
	\end{equation*}
	Plugging this and the estimate above we arrive thanks to the second and third requirement of  \eqref{eq:addCondi}, with some $C_0>0$ and $a'>0$, at the bound which settles the claim
	\begin{equation*}
		\begin{split}
			\abs{\frac{\sqrt{-\Phi''(a_*)x}}{J(a_*,0)}J_2(a_*,x)}&\leq C_0 a_*\sqrt{-\Phi''(a_*)x}\lbrb{e^{-ax}+\frac{2}{M' x}e^{-M'x\ln(A)+(2k+l+2)\ln\lbrb{\frac{1}{a_*}}}}\\
			&\leq 2C_0 a_*\sqrt{-\Phi''(a_*)x}e^{-a'x}\leq 2C_0 e^{-a''x}.
		\end{split}
	\end{equation*} 
\end{proof}

\subsection{Proofs of Lemmae \ref{lem:phi''} and \ref{lem:condi}}
\label{subsec:aux}
We now prove the lemmae that we used in Subsection \ref{subsec:L} to compare our results with the existing ones. 
Recall that $\Delta(x):=\int_0^{\frac1x}y^2\nu_\Phi(dy),\,x>0,$ see \eqref{def:D}.
\begin{proof}[Proof of Lemma \ref{lem:phi''}]
	Since $-\phi''(x)=\IntOI e^{-xy}y^2\mu_{\Phi}(dy)$ we get that
	$-\phi''(x)\geq e^{-1}\int_{0}^{\frac{1}{x}}y^2\mu_{\Phi}(dy)$
	and the lower bound in \eqref{eq:phi''} follows. Then, using $ve^{-v}\leq e^{-1}$, for $v\geq1,$ upper bound follows from
	\begin{equation*}
		\begin{split}
			-\phi''(x)	&\leq \int_{0}^{\frac{1}{x}}y^2\mu_{\Phi}(dy)+\frac{1}{x^2}\int_{\frac{1}{x}}^{\infty}e^{-xy}x^2y^2\mu_{\Phi}(dy)
			\leq \Delta(x)+\frac{e^{-1}}{x^2} \bar{\mu}_\phi\lbrb{\frac{1}{x}}.
		\end{split}
	\end{equation*}
\end{proof}

\begin{proof}[Proof of Lemma \ref{lem:condi}]
	From Item \ref{it:phi'} of Lemma \ref{lem:Bern}  $\phi'''$ is positive and non-increasing and therefore
	\[-\phi''(x)\geq \int_x^{2x}\phi'''(y)dy\geq x\phi'''(2x).\]
	Hence \eqref{def:condiB''} implies \eqref{def:condiB}. Let us show that \eqref{condi:DR} triggers \eqref{def:condiB''}. From \eqref{eq:phi''} and \eqref{condi:DR} we get at infinity
	$-\phi''(x)\asymp \Delta(x)$. Also
	from  \cite[eq. (3.3)]{DonRiv} we have that
	\begin{equation}\label{eq:12}
		\begin{split}
			&\Delta\lbrb{\frac{1}{x}}\leq 2\int_0^x w\bar{\mu}_\phi(w)dw=\Delta\lbrb{\frac{1}{x}}+x^2 \bar{\mu}_\phi\lbrb{x}
		\end{split}
	\end{equation}
	and we arrive from \eqref{eq:phi''} that for small $x$  we have
	\begin{equation}\label{eq:relation}
		\begin{split}
			&\int_0^x w\bar{\mu}_\phi(w)dw\asymp-\phi''(x^{-1})\asymp \Delta(x^{-1}).
		\end{split}
	\end{equation}
	However, from {the assumption} \textbf{[SC']} at \cite[Page $8$]{DonRiv} we have, for any $\lambda>1$, some $c\geq 1$ and $\alpha\in\lbrbb{0,2}$,
	\begin{equation*}
		\begin{split}
			&1\leq \liminfo{x}\frac{\int_0^{\lambda x} w\bar{\mu}_\phi(w)dw}{\int_0^x w\bar{\mu}_\phi(w)dw}\leq \limsupo{x}\frac{\int_0^{\lambda x} w\bar{\mu}_\phi(w)dw}{\int_0^x w\bar{\mu}_\phi(w)dw}\leq c\lambda^{2-\alpha}.
		\end{split}
	\end{equation*}
	Setting $\lambda=2$ and using \eqref{eq:relation} with $x\to 1/2y$ we deduce that at infinity
	\[-\phi''(y)\asymp -\phi''(2y),\]
	i.e. \eqref{def:condiB''} holds and hence \eqref{def:condiB} follows.
	From \cite[Definition, eq(2.0.7), p.65]{BGT87} and \eqref{eq:relation} we get that $-\phi''(x)$ is O-regularly varying at infinity, see \cite[Corollary 2.0.5]{BGT87}. Then from \cite[2.1.9 of Theorem 2.1.8]{BGT87}  the lower Matuszewska index  of $-\phi''(x)$ is larger or equal to $\alpha-2>-2$ and the upper index is not greater than $0$. Therefore, $-\phi''(x)$ is of bounded decrease as in \cite[p.71]{BGT87} and from \cite[Proposition 2.2.1]{BGT87} we get that $-\phi''(x)\geq x^{-2+\alpha-\epsilon}$ for all $0<\epsilon$  and $x$ large enough. Therefore, from \eqref{eq:relation} we obtain that
	\[\Delta\lbrb{x}\geq Cx^{-2+\alpha-\epsilon}\]
	for some $C>0$ and all $x$ large enough and \eqref{def:condiA} holds with $L=\infty$. Also the stronger \eqref{eq:dBound} holds.
\end{proof}

\section{Proofs of results in Subsection \ref{subsec:series}}
\label{subsec:power}
Here, we prove the main results contained in Subsection \ref{subsec:series}  based on Assumptions \eqref{eq:extensionA3} and \eqref{eq:uniformlimcond}.
\subsection{Proof of Theorem \ref{thm:smoothfmu}}
To prove Theorem \ref{thm:smoothfmu}, we will make use of the following integral representation, which is a consequence of the Laplace inversion formula of Proposition \ref{prop:LTthetapi}. Recall the definition of $\mathbb\D$ in \eqref{def:Reg}.
\begin{prop}\label{prop:intreptheta}
	Let $\Phi$ be the Laplace exponent of a potentially killed subordinator satisfying Assumptions \eqref{eq:extensionA3} and \eqref{eq:uniformlimcond} for some $\theta \in (0,\pi)$. Fix any $\varepsilon>0$ and let $\gamma_{\varepsilon,\theta}$ be the circle arc in $\C$ parametrized as $\gamma_{\varepsilon,\theta}:z=\varepsilon e^{i\xi}$ for $\xi \in \left[\frac{\theta}{2}-\pi,\pi-\frac{\theta}{2}\right]$.	
	Then, on $\mathbb{D}$,
	\begin{equation}\label{statement1}
		\begin{split}
			f_\Phi (x,t) \, = \,& \frac{1}{\pi} \int_\varepsilon^{+\infty}\Im\left(\frac{\Phi^\dagger\lb \rho e^{i\left(\pi-\frac{\theta}{2}\right)}\rb}{\rho } e^{-x \Phi \lb \rho e^{i\left(\pi-\frac{\theta}{2}\right)}  \rb+ t \rho e^{i\left(\pi-\frac{\theta}{2}\right)} }\right) \, d\rho  +\frac{1}{2\pi i}\int_{\gamma_{\varepsilon,\theta}} \frac{\Phi^\dagger(z)}{z}e^{-x\Phi(z)+tz}dz.
		\end{split}
	\end{equation}
\end{prop}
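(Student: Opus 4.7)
The plan is to deform the vertical Bromwich contour furnished by Proposition~\ref{prop:LTthetapi} into the keyhole contour consisting of the two rays at angles $\pm\alpha$, with $\alpha:=\pi-\theta/2$, joined around the origin by the arc $\gamma_{\varepsilon,\theta}$, and then to collapse the two ray integrals into the $\Im$-expression by the Schwarz reflection principle. Throughout I set $F(z):=z^{-1}\Phi^\dagger(z)e^{-x\Phi(z)+tz}$.

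First I would verify that the ray integral in \eqref{statement1} is absolutely convergent. On $z=\rho e^{i\alpha}$ we have $\cos\alpha=-\sin(\theta/2)<0$; Assumption~\eqref{eq:uniformlimcond} provides, for every $\eta>0$, a radius $R_\eta>0$ with $|\Phi(z)-\mathfrak{b}z|\le\eta|z|$ on $\{|z|>R_\eta\}\cap\overline{\C(\alpha)}$. Since $(t,x)\in\mathbb{D}$ forces $t-x\mathfrak{b}>0$, choosing $\eta$ small enough gives $\Re(tz-x\Phi(z))\le -c\rho$ for $\rho>R_\eta$ and some $c=c(t,x)>0$; combined with $|\Phi^\dagger(z)/z|\to 0$, a consequence of \eqref{eq:uniformlimcond} and the definition of $\Phi^\dagger$ in \eqref{def:Pdag}, this yields exponential decay of the integrand and hence absolute convergence.

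Next, for large $R>0$ I would form a closed contour $T_R$ consisting of the Bromwich segment $B_R$ from $a-iR$ to $a+iR$ (with $a>0$ chosen so that Proposition~\ref{prop:LTthetapi} applies), a large arc in the upper half-plane of radius $R_0=\sqrt{a^2+R^2}$ from $a+iR$ to $R_0e^{i\alpha}$, the upper ray from $R_0e^{i\alpha}$ inward to $\varepsilon e^{i\alpha}$, the reverse of $\gamma_{\varepsilon,\theta}$ (which passes through $\varepsilon$ on the positive real axis) from $\varepsilon e^{i\alpha}$ to $\varepsilon e^{-i\alpha}$, the lower ray from $\varepsilon e^{-i\alpha}$ outward to $R_0 e^{-i\alpha}$, and the symmetric large lower arc back to $a-iR$. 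A direct winding-number calculation shows that $T_R$ does not enclose $0$, while Assumption~\eqref{eq:extensionA3} guarantees that $F$ is holomorphic inside $T_R$, so Cauchy's theorem yields $\oint_{T_R}F(z)\,dz=0$. Letting $R\to\infty$, the piece of the upper arc with argument in $[\arctan(R/a),\pi/2]$ contributes $\bo{1}\cdot\so{1}\to 0$ because its length converges to $a$ and $|\Phi^\dagger(z)/z|=\so{1}$; on $[\pi/2+\delta,\alpha]$ the estimate from the first step yields $\Re(tz-x\Phi(z))\le -c'R_0$, killing the linear length of the arc; the transition sub-sector $[\pi/2,\pi/2+\delta]$ is absorbed by choosing $\delta>0$ arbitrarily small first, using $|\Phi^\dagger(z)/z|=\so{1}$ and the continuity supplied by \eqref{eq:extensionA3}, and then sending $R\to\infty$. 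The lower arc is symmetric. Using Proposition~\ref{prop:LTthetapi} to identify $\lim_{R\to\infty}\int_{B_R}F=2\pi i\,f_\Phi(x,t)$, I arrive at
\[
2\pi i\,f_\Phi(x,t)=\int_{\gamma_{\varepsilon,\theta}}F(z)\,dz+\int_{\varepsilon}^{\infty}F(\rho e^{i\alpha})e^{i\alpha}d\rho-\int_{\varepsilon}^{\infty}F(\rho e^{-i\alpha})e^{-i\alpha}d\rho.
\]

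Finally, since $\Phi$ is real on $(0,\infty)$ and holomorphic on $\C\left(\pi-\tfrac{\theta}{2}\right)$, the Schwarz reflection principle gives $F(\bar z)=\overline{F(z)}$, so the two ray integrals combine into $2i\,\Im\int_\varepsilon^\infty F(\rho e^{i\alpha})e^{i\alpha}d\rho$; together with $e^{i\alpha}/(\rho e^{i\alpha})=1/\rho$ this reproduces exactly the first term in \eqref{statement1}. The hardest step is the control of the transition sub-sector around the positive imaginary axis: there neither the sign of $\Re z$ nor the inequality $\Re\Phi(z)\ge\Phi(\Re z)$ of Item~\eqref{it:sign} of Lemma~\ref{lem:Bern} (which requires $\Re z\ge 0$) gives decay on its own, and the argument must combine the uniform bound from \eqref{eq:uniformlimcond}, the strict inequality $t>x\mathfrak{b}$ on $\mathbb{D}$, and the freedom to pick $\delta$ arbitrarily small before passing to the $R$-limit.
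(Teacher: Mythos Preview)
Your overall contour-deformation strategy matches the paper's, but there is a genuine gap in your treatment of the ``transition sub-sector'' $[\pi/2,\pi/2+\delta]$ on the large arc. Your bound there combines the arc length $\delta R_0$ with $|\Phi^\dagger(z)/z|=\so{1}$, but you still must control $|e^{-x\Phi(z)}|=e^{-x\Re\Phi(z)}$. On this sub-arc $\Re z\le 0$, so Item~\eqref{it:sign} of Lemma~\ref{lem:Bern} is unavailable; writing $\Phi=q+\mathfrak{b}z+\Phi^\dagger$ and invoking \eqref{eq:uniformlimcond} only gives $|\Phi^\dagger(z)|\le\eta R_0$ for large $R_0$, whence $|e^{-x\Phi^\dagger(z)}|\le e^{x\eta R_0}$. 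Neither choosing $\delta$ small first nor applying Jordan's inequality to the factor $e^{(t-\mathfrak{b}x)z}$ disposes of this: even after Jordan removes the $R_0$ from the arc length, the residual factor $|\Phi^\dagger(z)/z|\cdot e^{x\eta R_0}$ need not tend to zero, because $\eta=\eta(R_0)\to 0$ does not force $\eta R_0\to 0$. The paper closes precisely this gap via Phragm\'en--Lindel\"of: one writes $e^{tz}=e^{(t/p)z}e^{(t/p')z}$ with $p>1$ chosen so that $t/p>\mathfrak{b}x$, observes that $g(z):=e^{(t/p-\mathfrak{b}x)z-x\Phi^\dagger(z)}$ is holomorphic on the sector $\C\lbrb{\tfrac{\pi}{2},\pi-\tfrac{\theta}{2}}$ of opening $<\pi/2$, bounded on both boundary rays (on $i\Rb^+$ since $\Re\Phi^\dagger\ge 0$ there by Item~\eqref{it:sign}, and on the ray at angle $\pi-\tfrac{\theta}{2}$ by the exponential decay you already established), and of at most order-one exponential growth inside by \eqref{eq:uniformlimcond}. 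Phragm\'en--Lindel\"of then gives $|g|\le M$ on the closed sector, after which Jordan's inequality applied to $e^{(t/p')z}$ yields the vanishing of the arc integral.

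A secondary issue: you invoke Proposition~\ref{prop:LTthetapi} to identify $\lim_{R\to\infty}\int_{B_R}F=2\pi i\,f_\Phi(x,t)$, but that proposition assumes the integrability condition \eqref{eq:Real1}, which is \emph{not} part of the hypotheses \eqref{eq:extensionA3}--\eqref{eq:uniformlimcond}. The paper instead starts from the Ces\`aro-limit Laplace inversion (always valid), and the contour deformation itself establishes that the pointwise limit of the Bromwich integral exists; a general fact then identifies the Ces\`aro limit with the pointwise one.
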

\begin{proof}
	Again, by \eqref{ceslimitdag} and \eqref{cesequalpointdag} for fixed $a>0$ and for almost any $(x,t) \in \mathbb{D}$
	\begin{equation*}
		f_\Phi(x,t) \, = \, \lim _{b \to +\infty} \frac{1}{2\pi i}\int_{a-ib}^{a+ib} e^{z t } 	\,  \frac{\Phi^\dagger(z)}{z} e^{-x\Phi(z)} \, dz,
	\end{equation*}
	provided that the limit on the right-hand side exists. Here, we compute the limit by using Cauchy's Theorem. 
	For $R \ge a$, let $b(R)=\sqrt{R^2-a^2}$. Set $A(R):= a-ib(R)$, $B(R):=a+ib(R)$ and observe that $|A(R)|=|B(R)|=R$. In particular, note that
	\begin{equation}\label{eq:420}
		\lim _{b \to +\infty} \int_{a-ib}^{a+ib} e^{z t } 	\,  \frac{\Phi^\dagger(z)}{z} e^{-x\Phi(z)} \, dz \, = \, \lim_{R \to +\infty} \int_{A(R)}^{B(R)} e^{z t } 	\,  \frac{\Phi^\dagger(z)}{z} e^{-x\Phi(z)} \, dz.
	\end{equation}
	Now define $C(R):=Re^{i\lb \pi - \frac{\theta}{2} \rb}$ and $F(R):=Re^{i\lb \pi + \frac{\theta}{2} \rb}$. Furthermore, let $\varepsilon >0$ and define $D(\varepsilon):=\varepsilon e^{i\lb \pi - \frac{\theta}{2} \rb}$ and $E(\varepsilon):=\varepsilon e^{i\lb \pi + \frac{\theta}{2} \rb}$. We let $\Gamma_R^+$ be the anticlockwise oriented circular arc joining $B(R)$ to $C(R)$, while we define $\Gamma_R^-$ the anticlockwise oriented circular arc joining $F(R)$ to $A(R)$. Denote also $\ell_1$ to be the oriented segment connecting $A(R)$ to $B(R)$, $\ell_2$ the oriented segment connecting $C(R)$ to $D(\varepsilon)$ and $\ell_3$ the oriented segment connecting $E(\varepsilon)$ to $F(R)$. Finally, let $-\gamma_{\varepsilon,\theta}$ be the clockwise oriented circular arc joining $D(\varepsilon)$ to $E(\varepsilon)$.
	Let $\partial \mathfrak{D}$ be the closed contour obtained by connecting, in this order, $\ell_1$, $\Gamma_R^+$, $\ell_2$, $-\gamma_{\varepsilon,\theta}$, $\ell_3$, $\Gamma_R^-$ (see Figure \ref{fig1}).	Such a contour is the boundary of an open set $\mathfrak{D}$ of the complex plane in which by assumption
		\begin{equation*}
			\mathfrak{D} \ni z  \mapsto F(z; x,t) \, : = \,  \frac{\Phi^\dagger(z)}{z}e^{z t-x\Phi(z)} \in \mathbb{C}, \qquad x>0, t>0,
		\end{equation*}
		is holomorphic and continuous at the boundary. Hence, we can apply Cauchy's Theorem to get
	\begin{equation*}
		\int_{\partial \mathfrak{D}} F(z; x, t) \, dz \, = \, 0.
	\end{equation*}
	This implies that
	\begin{align}\label{decomp}
		&\int_{A(R)}^{B(R)} e^{z t } 	\,  \frac{\Phi^\dagger(z)}{z} e^{-x\Phi(z)} \, dz \\ = \,& - \int_{\Gamma_R^+}F(z; x, t) \, dz \, - \, \int_{\ell_2} F(z; x, t) \, dz \, + \, \int_{\gamma_{\varepsilon,\theta}} F(z; x, t) \, dz \, - \, \int_{\ell_3} F(z; x, t) \, dz   - \int_{\Gamma_R^-} F(z; x, t) \, dz .	
	\end{align}
	\begin{minipage}{0.5\linewidth}
		\begin{tikzpicture}[scale=0.4]
			\draw [decoration={markings,mark=at position 1 with
				{\arrow[scale=3,>=stealth]{>}}},postaction={decorate}] (0,-8.5) -- (0,8.5);
			\draw [decoration={markings,mark=at position 1 with
				{\arrow[scale=3,>=stealth]{>}}},postaction={decorate}] (-7,0) -- (7,0);
			\draw[black, dashed] (-2,-2)--(0,0);
			\draw[black, dashed] (-2,2)--(0,0);
			\draw[black] (5,-7) -- (5,-4.88);
			\draw[black] (5,4.88) -- (5,7);
			\draw[black, dashed] (5,-7) -- (5,-8);
			\draw[black, dashed] (5,7) -- (5,8);
			\draw[black, dashed] (0,0)--(4.88,4.70);
			\begin{scope}[very thick,decoration={
					markings,
					mark=at position 0.5 with {\arrow{>}}}
				] 
				\centerarc[red,very thick,postaction={decorate}](0,0)(44:90:7);
				\centerarc[red,very thick,postaction={decorate}](0,0)(90:135:7);
				\draw[red,very thick, postaction={decorate}] (5,-4.88) -- (5,4.88);
				\draw[red,very thick,postaction={decorate}] (-4.88,4.88) -- (-2,2);
				\centerarc[red,very thick,postaction={decorate}](0,0)(135:-135:2.828);
				\draw[red,very thick,postaction={decorate}] (-2,-2)--(-4.88,-4.88);
				\centerarc[red,very thick,postaction={decorate}](0,0)(-135:-90:7);
				\centerarc[red,very thick,postaction={decorate}](0,0)(-90:-44:7);
			\end{scope}
			\fill[red] (5,4.88) circle (0.2);
			\fill[red] (-4.95,4.95) circle (0.2);
			\fill[red] (-2,2) circle (0.2);
			\fill[red] (-2,-2) circle (0.2);
			\fill[red] (-4.95,-4.95) circle (0.2);
			\fill[red] (5,-4.88) circle (0.2);
			\node at (6.3,5.40) {\large $B(R)$};
			\node at (6.3,-5.40) {\large $A(R)$};
			\node at (-6.3,-5.25) {\large $F(R)$};
			\node at (-6.3,5.25) {\large $C(R)$};
			\node at (-3,1.5) {\large $D(\varepsilon)$};
			\node at (-3,-1.5) {\large $E(\varepsilon)$};
			\node at (-0.85,-1.5) {\large $\varepsilon$};
			\centerarc[dashed](0,0)(135:225:0.5);
			\node at (-1,0.3) {\large $\theta$};
			\node at (2.8,3.5) {\large $R$};
			\fill[red] (0,7) circle (0.2);
			\fill[red] (0,-7) circle (0.2);
			\node at (2,7.5) {\large $M_+(R)$};
			\node at (2,-7.7) {\large $M_-(R)$};
			\node at (-1,-5.7) {\large $\Gamma_R^-$};
			\node at (-1,5.7) {\large $\Gamma_R^+$};
			\node at (-2.7,3.8)  {\large $\ell_2$};
			\node at (-2.7,-3.8)  {\large $\ell_3$};
			\node at (2,0.3)  {\large $\gamma_{\varepsilon,\theta}$};
			\node at (4.5,0.6)  {\large $\ell_1$};
		\end{tikzpicture}
		\captionof{figure}{\label{fig1}Sketch of the keyhole-type contour.}
	\end{minipage}
	\begin{minipage}{0.5\linewidth}
		\begin{tikzpicture}[scale=0.4]
			\draw [decoration={markings,mark=at position 1 with
				{\arrow[scale=3,>=stealth]{>}}},postaction={decorate}] (0,-8.5) -- (0,8.5);
			\draw [decoration={markings,mark=at position 1 with
				{\arrow[scale=3,>=stealth]{>}}},postaction={decorate}] (-7,0) -- (7,0);
			\draw[black, dashed] (2,-2)--(0,0);
			\draw[black] (5,-7) -- (5,-4.88);
			\draw[black] (5,4.88) -- (5,7);
			\draw[black, dashed] (5,-7) -- (5,-8);
			\draw[black, dashed] (5,7) -- (5,8);
			\draw[black, dashed] (0,0)--(4.88,4.70);
			\begin{scope}[very thick,decoration={
					markings,
					mark=at position 0.5 with {\arrow{>}}}
				] 
				\centerarc[red,very thick,postaction={decorate}](0,0)(44:90:7);
				\centerarc[red,very thick,postaction={decorate}](0,0)(90:180:7);
				\centerarc[red,very thick,postaction={decorate}](0,0)(180:90:2.828);
				\centerarc[red,very thick,postaction={decorate}](0,0)(90:0:2.828);
				\centerarc[red,very thick,postaction={decorate}](0,0)(0:-90:2.828);
				\centerarc[red,very thick,postaction={decorate}](0,0)(-90:-180:2.828);
				\draw[red,very thick, postaction={decorate}] (5,-4.88) -- (5,4.88);
				\centerarc[red,very thick,postaction={decorate}](0,0)(-180:-90:7);
				\centerarc[red,very thick,postaction={decorate}](0,0)(-90:-44:7);
			\end{scope}
			\begin{scope}[very thick,decoration={
					markings,
					mark=at position 0.3 with {\arrow{>[left]}}}
				]
				\draw[red,very thick,postaction={decorate}] (-7,0) -- (-2.828,0);
				\draw[red,very thick,postaction={decorate}] (-2.828,0)--(-7,0);
			\end{scope}
			\fill[red] (5,4.88) circle (0.2);
			\fill[red] (-7,0) circle (0.2);
			\fill[red] (-2.828,0) circle (0.2);
			\fill[red] (5,-4.88) circle (0.2);
			\node at (6.3,5.40) {\large $B(R)$};
			\node at (6.3,-5.40) {\large $A(R)$};
			\node at (-8.5,1.2) {\large $C(R)$};
			\node at (-8.5,-1.2) {\large $F(R)$};
			\node at (-1.5,0.8) {\large $D(\varepsilon)$};
			\node at (-1.5,-0.8) {\large $E(\varepsilon)$};
			\node at (0.85,-1.5) {\large $\varepsilon$};
			\node at (2.8,3.5) {\large $R$};
			\fill[red] (0,7) circle (0.2);
			\fill[red] (0,-7) circle (0.2);
			\node at (2,7.5) {\large $M_+(R)$};
			\node at (2,-7.7) {\large $M_-(R)$};
			\node at (-1,-5.7) {\large $\Gamma_R^-$};
			\node at (-1,5.7) {\large $\Gamma_R^+$};
			\node at (-4,1)  {\large $\ell_2$};
			\node at (-6,-1)  {\large $\ell_3$};
			\node at (2,0.3)  {\large $\gamma_{\varepsilon}$};
			\node at (4.5,0.6)  {\large $\ell_1$};
		\end{tikzpicture}
		\captionof{figure}{\label{fig2}Sketch of the keyhole contour.}	
	\end{minipage}\\
	
	Now we deal with various terms separately.
	To deal with the first integral, let $M^+(R)= Ri$ and split the curve $\Gamma_R^+$ into $\Gamma_R^1$ connecting $B(R)$ to $M^+(R)$ and $\Gamma_R^2$ connecting $M^+(R)$ to $C(R)$ so that
	\begin{equation}	\label{29}
		\int_{\Gamma_R^+} F(z; x, t) \, dz  \, = \, \int_{\Gamma_R^1} F(z; x, t) \, dz  + \int_{\Gamma_R^2} F(z; x, t) \, dz.
	\end{equation}
	We begin with $\Gamma_R^1$. Note that, by the Estimation Lemma \cite[Theorem 5.24]{howie}
	\begin{equation}\label{estlemmagen}
		\left| \int_{\Gamma_R^1} F(z; x, t) \, dz \right| \, \leq \, \text{length}(\Gamma_R^1) \, \max_{z \in \Gamma_R^1} \left| F(z; x, t) \right|,
	\end{equation}
	where, with an abuse of notation, we denote by $\Gamma_R^1$ also the image of the parametrized oriented curve. To evaluate the maximum appearing in \eqref{estlemmagen} we parametrize $\Gamma_R^1$ as follows
	\begin{equation*}
		\Gamma_R^1 \, = \, \ll z \in \mathbb{C}: z = R e^{i\xi}, \xi \in \left[ \xi_{B}(R), \frac{\pi}{2} \right] \rr,
	\end{equation*}
	where $\xi_B(R)= \arctan \lb \frac{b(R)}{a} \rb$.
	Then, for $z = Re^{i\xi} \in \Gamma_R^1$, it holds
	\begin{equation*}
		\left| F(z; x,t) \right| \, =  \frac{\left| \Phi^\dagger(Re^{i\xi}) \right|}{R} e^{Rt\cos \xi -x\Re \Phi (Re^{i\xi})}.
	\end{equation*}
	Without loss of generality we can assume $\xi_B(R)> \frac{\pi}{4}$. First, observe that $\Re \lb Re^{i\xi}\rb= R \cos \xi \geq 0$, for any $\xi \in \left[ \xi_B(R), \frac{\pi}{2} \right]$, and thus, by Item \eqref{it:sign} of Lemma \ref{lem:Bern}, we have that $\Re \Phi \lb Re^{i\xi} \rb \geq 0$. Furthermore, it holds that $R \cos \xi \leq R \cos \xi_{B}(R) = a$, for any $\xi \in \left[ \xi_B(R), \frac{\pi}{2} \right]$. Hence, we get
	\begin{equation*}
		\max_{z \in \Gamma_R^1} \left| F(z; x,t) \right| \, \leq \, e^{ta} \max_{\xi \in \left[ \frac{\pi}{4}, \frac{\pi}{2} \right]} \left| \frac{\Phi^{\dagger} \lb Re^{i\xi} \rb}{Re^{i\xi}} \right|
	\end{equation*}
	and thus by Item \eqref{it:asymp} of Lemma \ref{lem:Bern} and the definition of $\phi^\dagger$, see \eqref{def:Pdag}, it holds
	\begin{equation}\label{maxtozerogen}
		\lim_{R \to + \infty} \max_{z \in \Gamma_R^1} \left| F(z; x,t) \right| = 0.
	\end{equation}
	Furthermore, it is not difficult to check that
	\begin{equation}\label{lengthtox0gen}
		\lim_{R \to +\infty}\text{length}(\Gamma_R^1) \, = \, a.
	\end{equation}
	By combining \eqref{maxtozerogen} and \eqref{lengthtox0gen} with \eqref{estlemmagen} we have that
	\begin{equation}\label{firsttozerogen11}
		\lim_{R \to +\infty} \int_{\Gamma_R^1} F(z; x,t) \, dz \, = \, 0.
	\end{equation}	
	Now, we deal with the second term in \eqref{29}, i.e. the one on $\Gamma_R^2$.
	Recalling that $(x,t) \in \mathbb{D}$, let $\delta=t-\mathfrak{b}x>0$ and choose $p > 1$ such that $\frac{t}{p}-\mathfrak{b}x>\frac{\delta}{2}$, that exists since $\lim_{p \to 1}\frac{t}{p}-\mathfrak{b}x=\delta$. Let also $p^\prime >1$ be the conjugate exponent of $p>1$, i.e. $\frac{1}{p}+\frac{1}{p^\prime}=1$. Then we have
	\begin{equation}	\label{220}
		\begin{split}
			\left| \int_{\Gamma_R^2}F(z; x,t) \, dz \right| \,  \leq \, & R \lb \max_{z \in \Gamma_R^2}  \left| \frac{\Phi^\dagger(z)}{z} e^{ \frac{t}{p}z-x\Phi(z)} \right| \rb \lb \int_0^{\frac{\pi-\theta}{2}} e^{-(Rt\sin \xi)/p^\prime} d\xi \rb  \\
			\leq \, & \frac{p^\prime \pi}{t} \lb \max_{z \in \Gamma_R^2}  \left| \frac{\Phi^\dagger(z)}{z} e^{ \frac{t}{p}z-x\Phi(z)} \right| \rb = \, \frac{p^\prime \pi e^{-xq}}{t} \lb \max_{z \in \Gamma_R^2}  \left| \frac{\Phi^\dagger(z)}{z} e^{ \left(\frac{t}{p}-\mathfrak{b}x\right)z-x\Phi^\dagger(z)} \right| \rb, 
		\end{split}	
	\end{equation}
	where in the second inequality we have used Jordan's inequality \cite[eq (2), page 262]{brownchurchill}. Now, consider 
	\begin{equation}\label{eq:goftheproof}
	g(z)=e^{ \left(\frac{t}{p}-\mathfrak{b}x\right)z-x\Phi^\dagger(z)}	
	\end{equation}
and observe that, by hypothesis, $g$ is holomorphic on $\C\left(\frac{\pi}{2},\pi-\frac{\theta}{2}\right)$ and continuous on $\overline{\C\left(\frac{\pi}{2},\pi-\frac{\theta}{2}\right)}$. Furthermore, for $z=iR$ we have
	\begin{equation*}
		|g(iR)|=e^{-x\Re \Phi^\dagger(iR)} \le 1,
	\end{equation*}
	since $\Re \Phi^\dagger(iR) \ge 0$ by Item \eqref{it:sign} of Lemma \ref{lem:Bern}. For $z=Re^{i\left(\pi-\frac{\theta}{2}\right)}$ we have instead
	\begin{equation*}
		\left|g\left(Re^{i\left(\pi-\frac{\theta}{2}\right)}\right)\right|=e^{-R\left(\frac{t}{p}-\mathfrak{b}x\right)\cos\left(\frac{\theta}{2}\right)-x\Re \Phi^\dagger\left(Re^{i\left(\pi-\frac{\theta}{2}\right)}\right)}.
	\end{equation*}
	Now let us show that for $R$ big enough, $\left|g\left(Re^{i\left(\pi-\frac{\theta}{2}\right)}\right)\right| \le 1$. Indeed,
	\begin{align*}
		\left|g\left(Re^{i\left(\pi-\frac{\theta}{2}\right)}\right)\right|&=\exp\left(-R\cos\left(\frac{\theta}{2}\right)\left(\frac{t}{p}-\mathfrak{b}x+\frac{x}{\cos\left(\frac{\theta}{2}\right)}\frac{\Re \Phi^\dagger\left(Re^{i\left(\pi-\frac{\theta}{2}\right)}\right)}{R}\right)\right).
	\end{align*}
	Once we observe that 
	\begin{equation*}
		\left|\frac{\Re \Phi^\dagger\left(Re^{i\left(\pi-\frac{\theta}{2}\right)}\right)}{R}\right| \le \frac{\left| \Phi^\dagger\left(Re^{i\left(\pi-\frac{\theta}{2}\right)}\right)\right|}{R}
	\end{equation*}
	and we use \eqref{eq:uniformlimcond} to state that $\lim_{R \to +\infty}\frac{\left| \Phi^\dagger\left(Re^{i\left(\pi-\frac{\theta}{2}\right)}\right)\right|}{R}=0$, we know that there exists a constant $C(t,x,p)$ such that for $R>C(t,x,p)$
	\begin{equation*}
		\frac{t}{p}-\mathfrak{b}x+\frac{x}{\cos\left(\frac{\theta}{2}\right)}\frac{\Re \Phi^\dagger\left(Re^{i\left(\pi-\frac{\theta}{2}\right)}\right)}{R} \ge \frac{\delta}{4}.
	\end{equation*}
	Hence, for $R>C(t,x,p)$,
	\begin{align*}
		\left|g\left(Re^{i\left(\pi-\frac{\theta}{2}\right)}\right)\right|&\le \exp\left(-\frac{\delta R\cos\left(\frac{\theta}{2}\right)}{4}\right) \le 1.
	\end{align*}
	On the other hand, $R \in [0,+\infty) \mapsto \left|g\left(Re^{i\left(\pi-\frac{\theta}{2}\right)}\right)\right| \in \R$ is continuous and then the latter inequality implies that there exists $M=M(x,t,p)$ such that $|g(z)| \le M$ for any $z \in \partial \C\left(\frac{\pi}{2},\pi-\frac{\theta}{2}\right)$.
	Furthermore, observe that, for $z=Re^{i\xi}$ with $\xi \in \left(\frac{\pi}{2},\pi-\frac{\theta}{2}\right)$ we have, by \eqref{eq:uniformlimcond}, $|\Re\Phi(z)|\le |\Phi(z)| \le C|z|$, for $|z| \ge 1$. Hence, for $|z| \ge 1$, transferring through \eqref{def:Pdag} $\phi^\dagger$ to $\phi$ and using that $\Re(z)\leq 0$, we get that
	\begin{equation*}
		|g(z)|=e^{\frac{t}{p} \Re z+{x}q-x\Re(\Phi(z))} \le e^{xq+x|\Re(\Phi(z))|} \le e^{qx} e^{xC|z|}.
	\end{equation*}
	The continuity of the function $z \in \overline{\C\left(\frac{\pi}{2},\pi-\frac{\theta}{2}\right)} \mapsto |g(z)|e^{xC|z|} \in \R$ guarantees that there exists a constant $M_1=M_1(x,t,p,q)>0$ such that 
	\begin{equation*}
		|g(z)| \le M_1e^{xC|z|}, \ \forall z \in \overline{\C\left(\frac{\pi}{2},\pi-\frac{\theta}{2}\right)}.
	\end{equation*}
	Since $\pi-\frac{\theta}{2}-\frac{\pi}{2}<\frac{\pi}{2}$, we can use Phragmen-Lindel\"of Theorem (see \cite[Chapter $4$, Exercise $9$, Item (b)]{stein10}) to obtain $|g(z)| \le M$ for any $z \in \overline{\C\left(\frac{\pi}{2},\pi-\frac{\theta}{2}\right)}$.
	Thus, from \eqref{220}, we have
	\begin{equation*}
		\left| \int_{\Gamma_R^2}F(z; x,t) \, dz \right| \le \frac{M(x,t,p)p'\pi e^{-xq}}{t} \lb \max_{z \in \Gamma_R^2}  \left| \frac{\Phi^\dagger(z)}{z} \right| \rb.
	\end{equation*}
	Taking the limit as $R \to +\infty$ we finally have
	\begin{equation}\label{230}
		\lim_{R \to +\infty} \int_{\Gamma_R^2} F(z;x,t)\, dz \, = \, 0,
	\end{equation}
	that, combined with \eqref{firsttozerogen11}, leads to
	\begin{equation}	\label{circsopra}
		\lim_{R\to +\infty} \int_{\Gamma_R^+} F(z;x,t) \, dz \, = \,0.
	\end{equation}
	In the same spirit, it is possible to see that
	\begin{equation}\label{circsotto}
		\lim_{R \to +\infty} \int_{\Gamma_R^-}F(z;x,t) \, dz \, = \, 0.	
	\end{equation}
	We consider now the integral on $\ell_2$. We have that
	\begin{equation}\label{239}
		\int_{\ell_2} F (z;x,t) dz \,= \, -\int_{\varepsilon}^R \frac{\Phi^\dagger\lb\rho e^{i\lb \pi-\frac{\theta}{2} \rb}\rb}{\rho } e^{-x \Phi \lb \rho e^{i\lb \pi-\frac{\theta}{2} \rb} \rb+t  \rho e^{i \lb \pi-\frac{\theta}{2} \rb}} \, d\rho.
	\end{equation}
	Choose $p,p^\prime > 1$ as above so that, recalling that $\frac{\left|\Phi^\dagger\left(\rho e^{i\left(\pi-\frac{\theta}{2}\right)}\right)\right|}{\rho} \le C$ for $\rho \ge \varepsilon$ by \eqref{eq:uniformlimcond}, we have
	\begin{align}
		&\int_\varepsilon^\infty \left|  \frac{\Phi^\dagger\lb\rho e^{i\lb \pi-\frac{\theta}{2} \rb}\rb}{\rho } e^{-x \Phi \lb \rho e^{i\lb \pi-\frac{\theta}{2} \rb} \rb+t  \rho e^{i \lb \pi-\frac{\theta}{2} \rb}} \right| d\rho \notag \\	= \, & e^{-qx}\int_\varepsilon^\infty \frac{|\Phi^\dagger\lb \rho e^{i \lb \pi-\frac{\theta}{2} \rb} \rb|}{\rho}  \exp \ll {-\rho \cos\left(\frac{\theta}{2}\right) \lb \frac{t}{p}-\mathfrak{b}x +x \frac{\Re \Phi^\dagger \lb \rho e^{i \lb \pi-\frac{\theta}{2} \rb}  \rb}{\rho \cos\left(\frac{\theta}{2}\right)}  \rb } \rr e^{-\rho  \frac{t}{p^\prime} \cos \frac{\theta}{2} }d\rho \notag \\
		\le & CM\int_\varepsilon^\infty e^{-\frac{\rho t}{p^\prime}\cos\left(\frac{\theta}{2}\right)}d\rho <+\infty, \label{richiamatadopo}
	\end{align}
	where we have also used that $\frac{t}{p}-\mathfrak{b}x>\frac{\delta}2>0$.
	Hence, we can take the limit in \eqref{239} to get
	\begin{equation}	\label{243}
		\lim_{R \to +\infty}\int_{\ell_2} F(z; x,t) \, dz \, = \, -\int_\varepsilon^{+\infty}\frac{\Phi^\dagger\lb\rho e^{i\lb \pi-\frac{\theta}{2} \rb}\rb}{\rho } e^{-x \Phi \lb \rho e^{i\lb \pi-\frac{\theta}{2} \rb} \rb+t  \rho e^{i \lb \pi-\frac{\theta}{2} \rb}} \, d\rho \notag \\ =: \, -I_1(\varepsilon).
	\end{equation}
	Analogously, on $\ell_3$, we have that
	\begin{equation}\label{244}
		\lim_{R \to +\infty}\int_{\ell_3} F(z; x,t) \, dz \, = \, \int_\varepsilon^{+\infty}\frac{\Phi^\dagger\lb\rho e^{i\lb \pi+\frac{\theta}{2} \rb}\rb}{\rho } e^{-x \Phi \lb \rho e^{i\lb \pi+\frac{\theta}{2} \rb} \rb+t  \rho e^{i \lb \pi+\frac{\theta}{2} \rb}} \, d\rho\, =: \, I_2(\varepsilon).
	\end{equation}
	Furthermore, by using the fact that $\overline{\Phi(z)} = \Phi(\overline{z})$ by Schwartz reflection principle (see \cite[Theorem $5.6$]{stein10}), we know that $I_2(\varepsilon) = \overline{I_1(\varepsilon)}$. Hence, taking the limit as $R \to +\infty$ in \eqref{decomp} and using \eqref{circsopra}, \eqref{circsotto}, \eqref{243} and \eqref{244}, we get
	\begin{equation*}
		\begin{split}
			\lim_{b \to +\infty}\int_{a-ib}^{a+ib} e^{z t } 	\,  \frac{\Phi^{\dagger}(z)}{z} e^{-x\Phi(z)} \, dz \, & = \,  I_1(\varepsilon)-\overline{I_1(\varepsilon)} + \, \int_{\gamma_{\varepsilon,\theta}} F(z; x, t) \, dz   =2iI_3(\varepsilon)+\, \int_{\gamma_{\varepsilon,\theta}} F(z; x, t) \, dz,
		\end{split}
	\end{equation*}
	where we denote
	\begin{align}
		I_3(\varepsilon):=\int_\varepsilon^{+\infty}\Im\left(\frac{\Phi^\dagger\lb \rho e^{i\left(\pi-\frac{\theta}{2}\right)}\rb}{\rho } e^{-x \Phi \lb \rho e^{i\left(\pi-\frac{\theta}{2}\right)}  \rb+ t \rho e^{i\left(\pi-\frac{\theta}{2}\right)} }\right) \, d\rho.
		\label{beforeepsilon}
	\end{align}
	This proves \eqref{statement1}.
\end{proof}
\begin{rmk}
	Under the hypotheses of Proposition \ref{prop:intreptheta}, if furthermore \begin{equation}\label{eq:intcondtheta}
		\int_0^1\frac{\left|\Im \Phi^\dagger \left(\rho e^{i\left(\pi-\frac{\theta}{2}\right)}\right)\right|}{\rho}d\rho<+\infty
	\end{equation}
	then we can send $\varepsilon \to 0$ in \eqref{statement1}, wherein by the estimation lemma the second term converges to zero, hence getting
	\begin{equation*}
		f_\Phi (x,t) \, = \, \frac{1}{\pi} \int_0^{+\infty}\Im\left(\frac{\Phi^\dagger\lb \rho e^{i\left(\pi-\frac{\theta}{2}\right)}\rb}{\rho } e^{-x \Phi \lb \rho e^{i\left(\pi-\frac{\theta}{2}\right)}  \rb+ t \rho e^{i\left(\pi-\frac{\theta}{2}\right)} }\right) \, d\rho.
	\end{equation*}
\end{rmk}
A similar integral representation holds also for $\bar{\mu}_\phi^{\ast n}$. The proof is similar to the one of Proposition \ref{prop:intreptheta}, where we substitute the term $\frac{\phi^\dagger(z)}{z}e^{-x\phi(z)}$ with $\left(\frac{\phi^\dagger(z)}{z}\right)^n$. For such a reason, we only underline the parts of the proof that are actually different.
\begin{prop}
	\label{lem:convtail}
	With the same notation of Proposition \ref{prop:intreptheta}, under \eqref{eq:extensionA3} and \eqref{eq:uniformlimcond}, for $n \ge \red{1}$, it holds that, for any $t>0$,
	\begin{equation}\label{convcode}
		\bar{\mu}_\Phi^{\ast n} (t) = \frac{1}{\pi} \int_{\varepsilon}^{+\infty} \Im \left[ \lb \frac{\Phi^\dagger \lb \rho e^{i \lb \pi-\frac{\theta}{2} \rb} \rb}{\rho e^{i \lb \pi-\frac{\theta}{2} \rb}} \rb^n e^{i \lb \pi-\frac{\theta}{2} \rb}e^{t\rho e^{i \lb \pi-\frac{\theta}{2} \rb}} \right] d\rho +\frac{1}{2\pi i} \int_{\gamma_{\varepsilon,\theta}} e^{tz} \lb \frac{\phi^\dagger (z)}{z} \rb^n dz.
	\end{equation}
\end{prop}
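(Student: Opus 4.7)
The plan is to mirror the contour-deformation strategy used in the proof of Proposition \ref{prop:intreptheta}, with the simpler integrand $\lbrb{\Phi^\dagger(z)/z}^n$ in place of $\Phi^\dagger(z)e^{-x\Phi(z)}/z$. First, observe that \eqref{def:Phi} gives $\Phi^\dagger(z)/z = \IntOI e^{-zy}\bar{\mu}_\Phi(y)dy$, so $\lbrb{\Phi^\dagger(z)/z}^n$ is the Laplace transform of $\bar{\mu}_\Phi^{\ast n}$. Hence by \cite[Theorem 4.2.21, Item a)]{abhn}, for any $a>0$ and almost every $t>0$,
\begin{equation*}
\bar{\mu}_\Phi^{\ast n}(t) = \text{C-}\lim_{r\to\infty}\frac{1}{2\pi i}\int_{a-ir}^{a+ir} e^{zt}\lbrb{\frac{\Phi^\dagger(z)}{z}}^n dz,
\end{equation*}
and by \cite[Theorem 4.1.2]{abhn} this equals the ordinary limit as soon as the latter exists.

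Second, for each $R \geq a$ I would introduce the same keyhole-type contour $\partial\mathfrak{D}$ as in Figure \ref{fig1}, apply Cauchy's theorem to the holomorphic integrand $z \mapsto e^{tz}\lbrb{\Phi^\dagger(z)/z}^n$ (which by \eqref{eq:extensionA3} extends continuously to the closure of the relevant region), and obtain a decomposition exactly as in \eqref{decomp}. The key step is then to show that the two large arc contributions vanish as $R\to\infty$.

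Third, for the upper arc I would split $\Gamma_R^+ = \Gamma_R^1 \cup \Gamma_R^2$ as before. On $\Gamma_R^1$ one has $\Re z \in [0,a]$, so $|e^{tz}| \leq e^{ta}$, while $\lvert \Phi^\dagger(z)/z\rvert \to 0$ uniformly by \eqref{eq:uniformlimcond}, and $\mathrm{length}(\Gamma_R^1) \to a$; the Estimation Lemma then kills this piece. On $\Gamma_R^2$, parametrizing $z=Re^{i\xi}$ with $\xi\in[\pi/2,\pi-\theta/2]$ and substituting $\psi=\xi-\pi/2$, Jordan's inequality yields
\begin{equation*}
\abs{\int_{\Gamma_R^2} e^{tz}\lbrb{\frac{\Phi^\dagger(z)}{z}}^n dz} \leq R\,\sup_{\xi}\abs{\frac{\Phi^\dagger(Re^{i\xi})}{Re^{i\xi}}}^n \int_0^{\pi/2} e^{-tR\sin\psi}d\psi \leq \frac{\pi}{2t}\sup_{\xi}\abs{\frac{\Phi^\dagger(Re^{i\xi})}{Re^{i\xi}}}^n,
\end{equation*}
which tends to $0$ by \eqref{eq:uniformlimcond}. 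Identical arguments handle $\Gamma_R^-$. The main obstacle here, compared with Proposition \ref{prop:intreptheta}, is the absence of the strongly decaying factor $e^{-x\Phi(z)}$, but the uniform decay $\Phi^\dagger(z)/z \to 0$ together with Jordan's inequality suffices.

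Finally, the integrals along $\ell_2$ and $\ell_3$ converge absolutely as $R\to\infty$ because $|e^{tz}| = e^{-t\rho\cos(\theta/2)}$ on the rays and $\lvert \Phi^\dagger(z)/z\rvert^n$ is bounded by \eqref{eq:uniformlimcond}. Passing to the limit and using the Schwartz reflection principle $\overline{\Phi(z)} = \Phi(\bar{z})$, the ray contributions combine into $2i\,\Im$ of a single integral, exactly as in the derivation of \eqref{beforeepsilon}. Dividing by $2\pi i$ gives
\begin{equation*}
\bar{\mu}_\Phi^{\ast n}(t) = \frac{1}{\pi}\int_\varepsilon^\infty \Im\lbbrbb{\lbrb{\frac{\Phi^\dagger(\rho e^{i(\pi-\theta/2)})}{\rho e^{i(\pi-\theta/2)}}}^n e^{i(\pi-\theta/2)} e^{t\rho e^{i(\pi-\theta/2)}}}d\rho + \frac{1}{2\pi i}\int_{\gamma_{\varepsilon,\theta}} e^{tz}\lbrb{\frac{\Phi^\dagger(z)}{z}}^n dz,
\end{equation*}
which is \eqref{convcode}. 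The existence of the ordinary limit just established upgrades the Cesaro equality to an ordinary one and validates the formula at every $t>0$ where $\bar{\mu}_\Phi^{\ast n}$ is continuous; since the right-hand side is a smooth function of $t$ and for $n\geq 2$ the convolution is automatically continuous, the identity extends to all $t>0$ (for $n=1$ one may use right-continuity of $\bar{\mu}_\Phi$ and pass to the right-limit).
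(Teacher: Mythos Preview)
Your proposal is correct and follows essentially the same approach as the paper: both mirror the contour deformation of Proposition \ref{prop:intreptheta} with the simpler integrand $(\Phi^\dagger(z)/z)^n e^{tz}$, noting that the key simplification is that Jordan's inequality together with the uniform decay $\Phi^\dagger(z)/z\to 0$ from \eqref{eq:uniformlimcond} directly kills the $\Gamma_R^2$ piece (no Phragm\'en--Lindel\"of needed), while boundedness of $|\Phi^\dagger(z)/z|^n$ for $\rho\ge\varepsilon$ and the factor $e^{-t\rho\cos(\theta/2)}$ give absolute convergence on the rays. Your final paragraph on upgrading from almost-every to every $t>0$ via continuity is a welcome clarification that the paper leaves implicit.
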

\begin{proof}
	Setting $F_n(z;t)=\left(\frac{\phi^\dagger(z)}{z}\right)^ne^{tz}$ the proof follows as the one in Proposition \ref{prop:intreptheta}. The main differences concern integrals over $\Gamma^2_R$ and $\ell_2$. First, by Jordan's inequality \cite[eq. (2), page 262]{brownchurchill},
	\begin{equation*}
		\left|\int_{\Gamma^2_R}F_n(z;t)dz\right| \le \frac{\pi}{t}\left(\max_{z \in \Gamma_R^2}\left|\frac{\Phi^\dagger(z)}{z}\right|^n\right) \to 0,\,\, \mbox{as $R\to\infty$},
	\end{equation*}
	where the limit holds by assumption \eqref{eq:uniformlimcond}. Concerning the integral over $\ell_2$, observe that
	\begin{equation*}
		\int_{\varepsilon}^{\infty}|F_n\left(\rho e^{i\left(\pi-\frac{\theta}{2};t\right)};t\right)|dz \le \int_{\varepsilon}^{\infty}e^{-\rho t \cos\left(\frac{\theta}{2}\right)}\left|\frac{\Phi^\dagger\left(\rho e^{i\left(\pi-\frac{\theta}{2}\right)}\right)}{\rho}\right|^nd\rho \le C\int_{\varepsilon}^{\infty}e^{-\rho t \cos\left(\frac{\theta}{2}\right)} d\rho<+\infty,
	\end{equation*}
	where we have used the fact that $\left|\frac{\Phi^\dagger\left(\rho e^{i\left(\pi-\frac{\theta}{2}\right)}\right)}{\rho}\right|^n$ is bounded for $\rho \ge \varepsilon$ by assumption \eqref{eq:uniformlimcond}.
\end{proof}
Now we are ready to prove Theorem \ref{thm:smoothfmu} by using the previously obtained integral representations.
\begin{proof}[Proof of Theorem \ref{thm:smoothfmu}]
	Let us first prove that $\bar{\mu}_\phi^{\ast n} \in C^\infty(0,+\infty)$. To do this, fix $\varepsilon>0$, let $l \ge 1$ and $[t_1,t_2]\subset (0,+\infty)$. Let $F_n(z;t)=\left(\frac{\phi^\dagger(z)}{z}\right)^ne^{tz}$. Then $\frac{\partial^l}{\partial t^l}F_n(z,t)=z^l\left(\frac{\phi^\dagger(z)}{z}\right)^ne^{tz}$ is continuous for $(z,t) \in \gamma_{\varepsilon,\theta} \times [t_1,t_2]$, where, with an abuse of notation, $\gamma_{\varepsilon,\theta}$ is the image of the parametrized curve defined in Proposition \ref{prop:intreptheta}. Then we have
	\begin{equation}\label{eq:upboundovereps}
		\left|\frac{\partial^l}{\partial t^l}F_n(z,t)\right| \le \max_{(z,t) \in \gamma_{\varepsilon,\theta} \times [t_1,t_2]}\left|z^l\left(\frac{\phi^\dagger(z)}{z}\right)^ne^{tz}\right|,
	\end{equation}
	where the right-hand side is constant, hence integrable over $\gamma_{\varepsilon,\theta}$. Next, let
	\begin{equation*}
		G_n(\rho,t)=\Im \left[ \lb \frac{\Phi^\dagger \lb \rho e^{i \lb \pi-\frac{\theta}{2} \rb} \rb}{\rho e^{i \lb \pi-\frac{\theta}{2} \rb}} \rb^n e^{i \lb \pi-\frac{\theta}{2} \rb}e^{t\rho e^{i \lb \pi-\frac{\theta}{2} \rb}} \right]
	\end{equation*}
	and observe that
	\begin{equation*}
		\frac{\partial^l}{\partial t^l}G_n(\rho,t)=\Im \left[ \lb \frac{\Phi^\dagger \lb \rho e^{i \lb \pi-\frac{\theta}{2} \rb} \rb}{\rho e^{i \lb \pi-\frac{\theta}{2} \rb}} \rb^n \rho^l e^{i(l+1) \lb \pi-\frac{\theta}{2} \rb}e^{t\rho e^{i \lb \pi-\frac{\theta}{2} \rb}} \right].
	\end{equation*}
	For $\rho \ge \varepsilon$ and $t \in [t_1,t_2]$, we have
	\begin{equation}\label{eq:contrder2}
		\left|\frac{\partial^l}{\partial t^l}G_n(\rho,t)\right| \le C e^{-t_1 \rho \cos \frac{\theta}{2}} \rho^l,
	\end{equation}
	since $\left|\frac{\Phi^\dagger\left(\rho e^{i\left(\pi-\frac{\theta}{2}\right)}\right)}{\rho}\right|^n$ is bounded for $\rho \ge \varepsilon$ by assumption \eqref{eq:uniformlimcond}. Observe that the right-hand side of \eqref{eq:contrder2} is integrable over $(\varepsilon,+\infty)$. Hence, by \eqref{eq:upboundovereps} and \eqref{eq:contrder2} and the fact that $l \ge 1$ is arbitrary, we can differentiate $l$ times inside the integrals in \eqref{convcode}, getting	
	\begin{equation}
		\frac{d^r}{dt^r} \bar{\mu}_\Phi^{\ast n} (t) 
		= \frac{1}{\pi} \int_{\varepsilon}^{+\infty} \Im \left[  \frac{\lb\Phi^\dagger \lb \rho e^{i \lb \pi-\frac{\theta}{2} \rb} \rb\rb^n}{\rho^{n-r}e^{i(n-r-1)\lb \pi-\frac{\theta}{2} \rb}}  e^{t\rho e^{i \lb \pi-\frac{\theta}{2} \rb}} \right] d\rho +\frac{1}{2\pi i} \int_{\gamma_{\varepsilon,\theta}} e^{tz}  \frac{\lb\phi^\dagger (z)\rb^n}{z^{n-r}}  dz.
		\label{derivatecode}
	\end{equation}
	This proves that $\bar{\mu}_\phi^{\ast n} \in C^\infty(0,+\infty)$.
	
	Now let us prove that $f_\Phi \in C^\infty(\mathbb{D})$. To do this, fix any $k,l \ge 0$, $0<t_1<t_2$ and $0<x_1<x_2<t_1/\mathfrak{b}$, recalling that any $(x,t) \in \mathbb{D}$ admits a compact neighbourhood of the form $[x_1,x_2]\times [t_1,t_2]$ specified before. Let $F(z;x,t)=\frac{\phi^\dagger(z)}{z}e^{-x\phi(z)+tz}$ and observe that
	\begin{equation*}
		\frac{\partial^k}{\partial x^k}\frac{\partial^l}{\partial t^l}F(z;x,t)=(-1)^k\frac{z^l\phi^\dagger(z)(\phi(z))^k}{z}e^{-x\phi(z)+tz}.
	\end{equation*} 
	The latter is continuous over $\gamma_{\varepsilon,\theta} \times [x_1,x_2]\times [t_1,t_2]$ and then
	\begin{equation}\label{eq:smoothf1}
		\left|\frac{\partial^k}{\partial x^k}\frac{\partial^l}{\partial t^l}F(z;x,t)\right|\le \max_{(z,x,t) \in \gamma_{\varepsilon,\theta} \times [x_1,x_2]\times [t_1,t_2]}\left|\frac{z^l\phi^\dagger(z)(\phi(z))^k}{z}e^{-x\phi(z)+tz}\right|,
	\end{equation}
	where the right-hand side is a constant and then it is integrable over $\gamma_{\varepsilon,\theta}$. Now set
	\begin{equation*}
		G(\rho;x,t)=\Im\left(\frac{\Phi^\dagger\lb \rho e^{i\left(\pi-\frac{\theta}{2}\right)}\rb}{\rho } e^{-x \Phi \lb \rho e^{i\left(\pi-\frac{\theta}{2}\right)}  \rb+ t \rho e^{i\left(\pi-\frac{\theta}{2}\right)} }\right)
	\end{equation*}
	and observe that
	\begin{equation*}
		\frac{\partial^k}{\partial x^k}\frac{\partial^l}{\partial t^l}G(\rho;x,t)=(-1)^k\Im\left(\frac{\Phi^\dagger\lb \rho e^{i\left(\pi-\frac{\theta}{2}\right)}\rb\left(\Phi\left(\rho e^{i\left(\pi-\frac{\theta}{2}\right)}\right)\right)^k}{\rho^{k+1}}\rho^{l+k} e^{il\left(\pi-\frac{\theta}{2}\right)}e^{-x \Phi \lb \rho e^{i\left(\pi-\frac{\theta}{2}\right)}  \rb+ t \rho e^{i\left(\pi-\frac{\theta}{2}\right)} }\right).
	\end{equation*}
	Recall that $\left|\frac{\Phi^\dagger\lb \rho e^{i\left(\pi-\frac{\theta}{2}\right)}\rb\left(\Phi\left(\rho e^{i\left(\pi-\frac{\theta}{2}\right)}\right)\right)^k}{\rho^{k+1}}\right|$ is bounded as $\rho \ge \varepsilon$, and set $p,p^\prime >1$ as right after \eqref{firsttozerogen11}. Arguing as in the proof of Proposition \ref{prop:intreptheta}, we know that there exists $M>0$ such that $|g(z)| \le M$ for all $z \in \overline{\mathbb{C}\left(\frac{\pi}{2},\pi-\frac{\theta}{2}\right)}$, where $g$ is defined in \eqref{eq:goftheproof} with $t_2$ and $x_1$ in place of $t$ an $x$. Hence, we have
	\begin{equation}\label{smoothfphi2}
		\left|\frac{\partial^k}{\partial x^k}\frac{\partial^l}{\partial t^l}G(\rho;x,t)\right|\le CM e^{-\frac{\rho t_1}{p^\prime}\cos \frac{\theta}{2}},
	\end{equation}
	where the right-hand side is integrable on $(\varepsilon,+\infty)$. Hence, since $k,l \ge 0$ are arbitrary, we can differentiate $k$ times in $x$ and $l$ times with respect to $t$ in \eqref{statement1}, to get that $f_\Phi \in C^\infty(\mathbb{D})$ and
	\begin{align}\label{derivatefphi}
		\begin{split}
			\frac{\partial^k}{\partial x^k}\frac{\partial^l}{\partial t^l}f_\phi(x,t)&= \frac{(-1)^k}{\pi} \int_\varepsilon^{+\infty}\Im\left(\frac{\Phi^\dagger\lb \rho e^{i\left(\pi-\frac{\theta}{2}\right)}\rb\lb\Phi\lb \rho e^{i\left(\pi-\frac{\theta}{2}\right)}\rb\rb^{k}}{\rho^{1-l} e^{-i l \left(\pi-\frac{\theta}{2}\right)}}e^{-x \Phi \lb \rho e^{i\left(\pi-\frac{\theta}{2}\right)}  \rb+ t \rho e^{i\left(\pi-\frac{\theta}{2}\right)} }\right) \, d\rho \\
			&+\frac{(-1)^k}{2\pi i}\int_{\gamma_{\varepsilon,\theta}} \frac{\Phi^\dagger(z)(\Phi(z))^{k}}{z^{1-l}}e^{-x\Phi(z)+tz}dz.
		\end{split} 
	\end{align}
\end{proof}

\subsection{Proof of Theorem \ref{thm:seriespi}}
In order to prove Theorem \ref{thm:seriespi} we employ the integral representations given in Propositions \ref{prop:intreptheta} and \ref{lem:convtail}.
\begin{proof}[Proof of Theorem \ref{thm:seriespi}]
	As in the \eqref{derivatefphi}, we have
	\begin{align}	
		\begin{split}
			\frac{\partial^k}{\partial x^k}\frac{\partial^l}{\partial t^l}f_\phi(x,t)&= \frac{(-1)^k}{\pi} \int_\varepsilon^{+\infty}\Im\left(\frac{\Phi^\dagger\lb \rho e^{i\left(\pi-\frac{\theta}{2}\right)}\rb\lb\Phi\lb \rho e^{i\left(\pi-\frac{\theta}{2}\right)}\rb\rb^{k}}{\rho^{1-l} e^{-i l \left(\pi-\frac{\theta}{2}\right)}}e^{-x \Phi \lb \rho e^{i\left(\pi-\frac{\theta}{2}\right)}  \rb+ t \rho e^{i\left(\pi-\frac{\theta}{2}\right)} }\right) \, d\rho \\
			&+\frac{(-1)^k}{2\pi i}\int_{\gamma_{\varepsilon,\theta}} \frac{\Phi^\dagger(z)(\Phi(z))^{k}}{z^{1-l}}e^{-x\Phi(z)+tz}dz.
		\end{split}
	\end{align}
	Writing $e^{-x\Phi(z)}$ as a power series and assuming we can exchange the series with the integral, we have
	\begin{align*}	
		&\frac{\partial^k}{\partial x^k}\frac{\partial^l}{\partial t^l}f_\phi(x,t)= \sum_{j=0}^{+\infty}(-1)^{k+j}\frac{x^j}{j!}\\
		&\times \left[\frac{1}{\pi} \int_\varepsilon^{+\infty}\Im\left(\frac{\Phi^\dagger\lb \rho e^{i\left(\pi-\frac{\theta}{2}\right)}\rb\lb\Phi\lb \rho e^{i\left(\pi-\frac{\theta}{2}\right)}\rb\rb^{k+j}}{\rho^{1-l} e^{-i l \left(\pi-\frac{\theta}{2}\right)}}e^{t \rho e^{i\left(\pi-\frac{\theta}{2}\right)} }\right) \, d\rho+\frac{1}{2\pi i}\int_{\gamma_{\varepsilon,\theta}} \frac{\Phi^\dagger(z)(\Phi(z))^{k+j}}{z^{1-l}}e^{tz}dz\right]\\
		&=\sum_{j=0}^{+\infty}\sum_{k_1+k_2+k_3=k+j} \frac{(k+j)!}{k_1!k_2!k_3!}(-1)^{k+j}\frac{x^j}{j!}q^{k_1}\mathfrak{b}^{k_2}\\
		&\times \left[\frac{1}{\pi} \int_\varepsilon^{+\infty}\Im\left(\frac{\left(\Phi^\dagger\lb \rho e^{i\left(\pi-\frac{\theta}{2}\right)}\rb\right)^{k_3+1}}{\rho^{1-l-k_2} e^{-i (l+k_2) \left(\pi-\frac{\theta}{2}\right)}}e^{t \rho e^{i\left(\pi-\frac{\theta}{2}\right)} }\right) \, d\rho+\frac{1}{2\pi i}\int_{\gamma_{\varepsilon,\theta}} \frac{(\Phi^\dagger(z))^{k_3+1}}{z^{1-l-k_2}}e^{tz}dz\right]\\
		&=\sum_{j=0}^{+\infty}\sum_{k_1+k_2+k_3=k+j} \frac{(k+j)!}{k_1!k_2!k_3!}(-1)^{k+j}\frac{x^j}{j!}q^{k_1}\mathfrak{b}^{k_2}\\
		&\times \left[\frac{1}{\pi} \int_\varepsilon^{+\infty}\Im\left(\frac{\left(\Phi^\dagger\lb \rho e^{i\left(\pi-\frac{\theta}{2}\right)}\rb\right)^{k_3+1}}{\rho^{k_3+1-(k_2+k_3+l)} e^{i (k_3-(l+k_2+k_3)) \left(\pi-\frac{\theta}{2}\right)}}e^{t \rho e^{i\left(\pi-\frac{\theta}{2}\right)} }\right) \, d\rho+\frac{1}{2\pi i}\int_{\gamma_{\varepsilon,\theta}} \frac{(\Phi^\dagger(z))^{k_3+1}}{z^{k_3+1-(k_2+k_3+l)}}e^{tz}dz\right]\\
		&=\sum_{j=0}^{+\infty}\sum_{k_1+k_2+k_3=k+j} \frac{(k+j)!}{k_1!k_2!k_3!}(-1)^{k+j}\frac{x^j}{j!}q^{k_1}\mathfrak{b}^{k_2}\frac{d^{k_2+k_3+l}}{d t^{k_2+k_3+l}}\mu^{\ast (k_3+1)}(t),
	\end{align*} 
	where we used \eqref{derivatecode} in the last equality. Now we only have to prove that we can exchange the series with the integrals. This is clear for the integral over $\gamma_{\varepsilon, \theta}$, thus let us only consider the one over $(\varepsilon,+\infty)$.
	Indeed, we have
	\begin{align}
		& \int_\varepsilon^{+\infty} \sum_{j=0}^{+\infty}\left|   (-1)^{k+j} \frac{ \Phi^\dagger\lb \rho e^{i\lb \pi-\frac{\theta}{2} \rb} \rb \lb \Phi\lb \rho e^{i\lb \pi-\frac{\theta}{2} \rb} \rb \rb^{{k+j}}}{j! \; \rho^{1-l}e^{-il\left(\pi-\frac{\theta}{2}\right)}} \; x^j \; e^{t  \rho e^{i \lb \pi-\frac{\theta}{2} \rb}} \right| d\rho  \notag \\
		= \,& \int_\varepsilon^\infty \frac{\left| \Phi^\dagger\lb \rho e^{i\lb \pi-\frac{\theta}{2} \rb} \rb \right|\left|\Phi\left(\rho e^{i\lb \pi-\frac{\theta}{2} \rb}\right)\right|^k}{\rho^{k+1}}\rho^{l+k} e^{x \left| \Phi \lb \rho e^{i \lb \pi-\frac{\theta}{2} \rb} \rb \right|-t\rho \cos \frac{\theta}{2}} d\rho \notag \\
		\leq \, & e^{xq}\int_\varepsilon^{+\infty} \frac{\left| \Phi^\dagger\lb \rho e^{i\lb \pi-\frac{\theta}{2} \rb} \rb \right|\left|\Phi\left(\rho e^{i\lb \pi-\frac{\theta}{2} \rb}\right)\right|^k}{\rho^{k+1}}\rho^{l+k} e^{x\mathfrak{b}\rho\cos\left(\frac{\theta}{2}\right)-t\rho \cos \frac{\theta}{2}+\Phi^\dagger\lb \rho e^{i \lb \pi-\frac{\theta}{2} \rb} \rb} d\rho.	\label{248}
	\end{align}
	It is clear that we have to check integrability in the right-hand side of \eqref{248} only in a neighbourhood of infinity. To do this, set $\delta=t-\mathfrak{b}x$ and $p,p^\prime>1$ as in the proof of Proposition \ref{prop:intreptheta}. By \eqref{eq:uniformlimcond} we know that there exists $K$ big enough such that $\frac{\left|\phi^\dagger\lb \rho e^{i \lb \pi-\frac{\theta}{2} \rb} \rb \right|\left|\phi\lb \rho e^{i \lb \pi-\frac{\theta}{2} \rb} \rb \right|^k}{\rho^{k+1}}$ is bounded and $\frac{\left|\phi^\dagger\lb \rho e^{i \lb \pi-\frac{\theta}{2} \rb} \rb \right|}{\rho \cos(\theta/2)}<\frac{\delta}{4}$, whenever $\rho >K$. Hence we get
	\begin{align}
		\begin{split}
			&\int_{K}^{+\infty}\frac{\left|\phi^\dagger\lb \rho e^{i \lb \pi-\frac{\theta}{2} \rb} \rb \right|\left|\phi\lb \rho e^{i \lb \pi-\frac{\theta}{2} \rb} \rb \right|^k}{\rho^{k+1}} e^{-\frac{t}{p^\prime}\rho \cos\left(\frac{\theta}{2}\right)}\exp\left(-\rho \cos\left(\frac{\theta}{2}\right)\left(\frac{t}{p}-x\mathfrak{b}-\frac{\left|\Phi^\dagger\lb \rho e^{i \lb \pi-\frac{\theta}{2} \rb} \rb \right|}{\rho \cos\left(\frac{\theta}{2}\right)}\right)\right) d\rho \notag \\
			\leq \,& C \int_K^{+\infty} \rho^{k+l}e^{-\frac{t}{p^\prime}\rho \cos \lb \frac{\theta}{2} \rb}\red{d\rho} < +\infty.
		\end{split}
	\end{align}
	This concludes the proof.
	\end{proof}

\subsection{Proof of Theorem \ref{behavatzero} }
The behaviour at zero provided in Theorem \ref{behavatzero} can be shown as a direct consequence of the series representation given in Theorem \ref{thm:seriespi}.
\begin{proof}[Proof of Theorem \ref{behavatzero}]
	Let $[t_1,t_2] \subset (0,+\infty)$ and observe that for $t \in [t_1,t_2]$ we have by \eqref{derivatecode}
	\begin{equation}\label{eq:estIjkl}
		\begin{split}
			|\mathcal{I}_{j,k,l}(t)| &\le \frac{1}{\pi}\int_\varepsilon^{+\infty}\frac{\left|\Phi^{\dagger}\left(\rho e^{i\left(\pi-\frac{\theta}{2}\right)}\right)\right|\left|\Phi\left(\rho e^{i\left(\pi-\frac{\theta}{2}\right)}\right)\right|^{j+k}}{\rho^{1-l}}e^{-t_1\rho \cos\left(\frac{\theta}{2}\right)}d\rho  \\
			&+ \frac{\varepsilon^{l}}{2\pi} \int_{\frac{\theta}{2}-\pi}^{\pi-\frac{\theta}{2}}|\Phi^\dagger (\varepsilon e^{i\varphi})||\phi(\varepsilon e^{i\varphi})|^{j+k} e^{\varepsilon t_2 |\cos(\varphi)|}d\varphi.
		\end{split}
	\end{equation}
	Hence, for $0<x<\frac{t_1}{\mathfrak{b}}$ and $t \in [t_1,t_2]$, recalling the definition of $\mathcal{P}_{n,k,l}(x,t)$ given in \eqref{eq:zero2}, we have
	\begin{align}
		\left|\frac{\partial^k \partial^l}{\partial x^k \partial t^l}f_\Phi(x,t)-\mathcal{P}_{n,k,l}(x,t)\right|  \le & \sum_{j=n+1}^{+\infty}\frac{x^j}{j!}|\mathcal{I}_{j,k,l}(t)| \notag \\
		\leq \, & \frac{x^{n+1}}{(n+1)!}\sum_{j=0}^{+\infty} \frac{x^j}{j!} \left[ \frac{1}{\pi}\int_\varepsilon^{+\infty}\frac{\left|\Phi^{\dagger}\left(\rho e^{i\left(\pi-\frac{\theta}{2}\right)}\right)\right|\left|\Phi\left(\rho e^{i\left(\pi-\frac{\theta}{2}\right)}\right)\right|^{j+k+n+1}}{\rho^{1-l}}e^{-t_1\rho \cos\left(\frac{\theta}{2}\right)}d\rho \right.\notag \\
		&+ \left.\frac{\varepsilon^{l}}{2\pi} \int_{\frac{\theta}{2}-\pi}^{\pi-\frac{\theta}{2}}|\Phi^\dagger (\varepsilon e^{i\varphi})||\phi(\varepsilon e^{i\varphi})|^{j+k+n+1} e^{\varepsilon t_2 |\cos(\varphi)|}d\varphi \right]
		\label{laststep}
	\end{align}
	where, in the last step, we used \eqref{eq:estIjkl}. The convergence of the series in \eqref{laststep} can be ascertained as in \eqref{248}. Taking the supremum in $[t_1,t_2]$ in \eqref{laststep}, we get \eqref{eq:zero1}.
\end{proof}

\subsection{Proof of Proposition \ref{prop:extcont}}
A slight modification of the arguments used in the proofs of Proposition \ref{lem:convtail} and Theorem \ref{thm:smoothfmu} leads to the desired result.
\begin{proof}[Proof of Proposition \ref{prop:extcont}]
	Since $\phi$ is a complete Bernstein function that can be extended by continuity over $\overline{\C(0,\pi)}$ with extension $\phi_+$, then, by using the relation $\overline{\phi(z)}=\phi(\overline{z})$, it is clear that it can be also extended by continuity over $\overline{\C(-\pi,0)}$. If we denote such an extension $\phi_-$, for any $z \in \overline{\C(0,\pi)}$ we get
	\begin{equation*}
		\overline{\phi_+(z)}=\phi_-(\overline{z}).
	\end{equation*}
	The proof is then carried on exactly as in Propositions \ref{prop:intreptheta} and \ref{lem:convtail}, by setting $\theta=0$ (see Figure \ref{fig2}), where we use $\phi^\dagger$ when we integrate over $\ell_1$, $\Gamma_R^+$, $\Gamma_R^-$ and $-\gamma_\varepsilon$, $\phi^\dagger_+$ over $\ell_2$ and $\phi^\dagger_-$ over $\ell_1$.
\end{proof}

\subsection{Proof of Theorem \ref{thm:seriessub}}
In order to prove Theorem \ref{thm:seriessub} we first need to provide an integral representation for $G_\phi$, $g_\phi$ and its derivatives, analogously to what we did for Theorems \ref{thm:smoothfmu} and \ref{thm:seriespi}.
This is done in the following proposition, whose proof is almost identical to the one of Proposition \ref{prop:intreptheta} and thus is omitted.
\begin{prop}
	Let $\Phi$ be the Laplace exponent of a potentially killed subordinator satisfying assumptions \ref{eq:extensionA3} and \eqref{eq:uniformlimcond} for some $\theta \in (0,\pi)$. Fix any $\varepsilon>0$ and let $\gamma_{\varepsilon,\theta}$ be defined as in Proposition \ref{prop:intreptheta}. Then, on $\mathbb{D}$,
	\begin{equation}\label{statementG}
		\begin{split}
			G_\Phi (x,t) \, = \,& \frac{1}{\pi} \int_\varepsilon^{+\infty}\Im\left(\frac{e^{-x \Phi \lb \rho e^{i\left(\pi-\frac{\theta}{2}\right)}  \rb+ t \rho e^{i\left(\pi-\frac{\theta}{2}\right)} }}{\rho} \right) \, d\rho  +\frac{1}{2\pi i}\int_{\gamma_{\varepsilon,\theta}} \frac{e^{-x\Phi(z)+tz}}{z}dz.
		\end{split}
	\end{equation}
	In particular, $G_\Phi \in C^\infty(\mathbb{D})$, $g_\Phi \in C^\infty(\mathbb{D})$ is well defined and for any $k,l \ge 0$ we have
	\begin{equation}\label{statementG2}
		\begin{split}
			\frac{\partial^k}{\partial x^k}\frac{\partial^l}{\partial t^l}g_\Phi (x,t) \, = \,& \frac{1}{\pi} \int_\varepsilon^{+\infty}\Im\left(\left(\Phi\left(\rho e^{i\left(\pi-\frac{\theta}{2}\right)}\right)\right)^k\rho^le^{il\left(\pi-\frac{\theta}{2}\right)}e^{-x \Phi \lb \rho e^{i\left(\pi-\frac{\theta}{2}\right)}  \rb+ t \rho e^{i\left(\pi-\frac{\theta}{2}\right)} } \right) \, d\rho  \\
			&+\frac{1}{2\pi i}\int_{\gamma_{\varepsilon,\theta}} (\Phi(z))^kz^le^{-x\Phi(z)+tz}dz.
		\end{split}
	\end{equation}
\end{prop}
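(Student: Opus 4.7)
\begin{proofthm}
The strategy is to mimic the proof of Proposition \ref{prop:intreptheta} almost verbatim, replacing the integrand $F(z;x,t)=\frac{\phi^\dagger(z)}{z}e^{tz-x\phi(z)}$ by
\[
\widetilde F(z;x,t)\;:=\;\frac{e^{tz-x\phi(z)}}{z},
\]
whose Laplace transform in $t$ is precisely $e^{-x\phi(z)}/z$ (the Laplace transform of $G_\phi(x,\cdot)$). First I would invoke complex Bromwich inversion as in \cite[Theorem 4.2.21]{abhn} to write $G_\phi(x,t)$ as a Cesaro (then genuine) limit of integrals of $\widetilde F$ along the vertical line $\Re z=a$, for arbitrary $a>0$ and $(x,t)\in\mathbb{D}$. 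I would then consider the same keyhole-type contour $\partial\mathfrak{D}$ sketched in Figure \ref{fig1}, assembled from the segments $\ell_1,\ell_2,\ell_3$, the outer arcs $\Gamma_R^{\pm}$ and the inner arc $-\gamma_{\varepsilon,\theta}$. Assumption \eqref{eq:extensionA3} guarantees that $\widetilde F(\cdot;x,t)$ is holomorphic on $\mathfrak{D}$ and continuous up to the boundary, and Cauchy's theorem yields the analogue of \eqref{decomp}.

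The rest of the argument is contour-by-contour identical to the estimates in the proof of Proposition \ref{prop:intreptheta}, with the simplification that the bad factor $\phi^\dagger(z)/z$ is replaced by the tame factor $1/z$. On $\Gamma_R^1$ the bound $|\widetilde F|\le R^{-1}e^{at}$ and $\mathrm{length}(\Gamma_R^1)\to a$ directly give that the integral vanishes as $R\to\infty$. On $\Gamma_R^2$ I would apply Jordan's inequality exactly as in \eqref{220}, producing the bound $(p'\pi e^{-xq}/t)\max_{\Gamma_R^2}|z^{-1}g(z)|$ with $g$ the function defined in \eqref{eq:goftheproof}; the Phragm\'en--Lindel\"of argument given between \eqref{220} and \eqref{230} applies unchanged to show $|g|\le M$ on $\overline{\mathbb{C}(\pi/2,\pi-\theta/2)}$, and the extra factor $\max_{\Gamma_R^2}|z^{-1}|=R^{-1}$ now forces the integral to zero. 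Symmetric estimates dispose of $\Gamma_R^-$. On $\ell_2$ and $\ell_3$, absolute integrability is immediate since $|1/\rho|\le 1/\varepsilon$ on $[\varepsilon,\infty)$ and the factor $e^{-(\rho t/p')\cos(\theta/2)}$ from the computation in \eqref{richiamatadopo} provides exponential decay; passing to the limit $R\to\infty$ and using Schwartz reflection $\overline{\phi(z)}=\phi(\bar z)$ to combine the $\ell_2$ and $\ell_3$ contributions into $2i\,\Im(\cdot)$ yields exactly \eqref{statementG}.

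For \eqref{statementG2} I would differentiate \eqref{statementG} under the integral sign. Fix any compact box $[x_1,x_2]\times[t_1,t_2]\subset\mathbb{D}$ with $x_2<t_1/\mathfrak{b}$. The integrand on $\gamma_{\varepsilon,\theta}$ is smooth and uniformly bounded, as in \eqref{eq:smoothf1}. For the ray integral, after differentiating $k$ times in $x$ and $l$ times in $t$, the modulus is bounded by
\[
C\,\rho^{k+l-1}\bigl|\phi\bigl(\rho e^{i(\pi-\theta/2)}\bigr)\bigr|^{k}\,e^{-\rho t_1\cos(\theta/2)/p'}\,\bigl|g(\rho e^{i(\pi-\theta/2)})\bigr|,
\]
and the Phragm\'en--Lindel\"of bound $|g|\le M$ combined with $|\phi(z)|=O(|z|)$ uniformly in the sector (Item \eqref{it:asymp} of Lemma \ref{lem:Bern} used via \eqref{eq:uniformlimcond}) makes this a polynomial times a decaying exponential, which is integrable on $[\varepsilon,\infty)$. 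Differentiation under the integral is therefore legitimate to all orders, yielding \eqref{statementG2} and simultaneously giving $G_\phi\in C^\infty(\mathbb{D})$; applying the $l=1,k=0$ case shows that $g_\phi=\partial_t G_\phi$ exists and belongs to $C^\infty(\mathbb{D})$, and higher derivatives are obtained by the same differentiation.

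The main obstacle is the estimate on $\Gamma_R^2$ (and symmetrically $\Gamma_R^-$), since one cannot simply bound $|\widetilde F|$ on the outer arc by something integrable: one genuinely needs the Phragm\'en--Lindel\"of control of $g$ in the sector $\mathbb{C}(\pi/2,\pi-\theta/2)$. Fortunately this is the same control already established in the proof of Proposition \ref{prop:intreptheta}, and the replacement of $\phi^\dagger(z)/z$ by $1/z$ strictly improves the decay on all remaining pieces of the contour, so no new analytical difficulty arises.
\end{proofthm}
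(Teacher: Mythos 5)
Your proposal is correct and follows exactly the route the paper intends: the paper omits this proof precisely because it is ``almost identical'' to that of Proposition \ref{prop:intreptheta}, and your reconstruction (Bromwich inversion, the same keyhole contour with the integrand $e^{tz-x\Phi(z)}/z$, the Phragm\'en--Lindel\"of bound on $g$ from \eqref{eq:goftheproof}, and dominated differentiation under the integral as in the proof of Theorem \ref{thm:smoothfmu}) is that argument. The observation that replacing $\Phi^\dagger(z)/z$ by $1/z$ only improves the decay on every piece of the contour is exactly the point.
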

We employ the latter together with Proposition \ref{lem:convtail} in the following proof.

\begin{proof}[Proof of Theorem \ref{thm:seriessub}]
	Let us first consider $G_\Phi$. Starting from \eqref{statementG}, we have, assuming that we can exchange the order of the series and the integral,
	\begin{align}
		G_\Phi(x,t)
		&=\sum_{j=0}^{+\infty}(-1)^j\frac{x^j}{j!}\left[\frac{1}{\pi}\int_{\varepsilon}^{+\infty}\Im\left(\frac{\left(\Phi\left(\rho e^{i\left(\pi-\frac{\theta}{2}\right)}\right)\right)^j}{\rho}e^{t \rho e^{i\left(\pi-\frac{\theta}{2}\right)}}\right)d\rho+\frac{1}{2\pi i}\int_{\gamma_{\varepsilon,\theta}} \frac{(\Phi(z))^j}{z}e^{tz}dz\right]\notag \\
		&=\frac{1}{\pi}\int_{\varepsilon}^{+\infty}\Im\left(\frac{1}{\rho} {e^{t \rho e^{i\left(\pi-\frac{\theta}{2}\right)}}}\right) \, d\rho+\frac{1}{2\pi i}\int_{\gamma_{\varepsilon,\theta}} \frac{1}{z} {e^{t z}} \, dz\notag\\
		&+\sum_{j=1}^{+\infty}(-1)^j\frac{x^j}{j!}\left[\frac{1}{\pi}\int_{\varepsilon}^{+\infty}\Im\left(\frac{\left(\Phi\left(\rho e^{i\left(\pi-\frac{\theta}{2}\right)}\right)\right)^j}{\rho}e^{t \rho e^{i\left(\pi-\frac{\theta}{2}\right)}}\right)d\rho+\frac{1}{2\pi i}\int_{\gamma_{\varepsilon,\theta}} \frac{(\Phi(z))^j}{z}e^{tz}dz\right]\notag\\
		& ={\frac{1}{\pi}\int_{\varepsilon}^{+\infty}\Im\left(\frac{1}{\rho} e^{t \rho e^{i\left(\pi-\frac{\theta}{2}\right)}}\right) \, d\rho+\frac{1}{2\pi i}\int_{\gamma_{\varepsilon,\theta}} \frac{1}{z} e^{t z} \, dz} \notag\\		
		&+\sum_{j=1}^{+\infty}\sum_{k_1+k_2+k_3=j-1}(-1)^j\frac{x^j}{k_1!k_2!(k_3+1)!}q^{k_1}\mathfrak{b}^{k_2}\notag\\
		&\times \left[\frac{1}{\pi}\int_{\varepsilon}^{+\infty}\Im\left(\frac{\left(\Phi^\dagger\left(\rho e^{i\left(\pi-\frac{\theta}{2}\right)}\right)\right)^{k_3+1}}{\rho^{k_3+1-(k_2+k_3)}e^{i(k_3-(k_2+k_3))}}e^{t \rho e^{i\left(\pi-\frac{\theta}{2}\right)}}\right)d\rho+\frac{1}{2\pi i}\int_{\gamma_{\varepsilon,\theta}} \frac{(\Phi^\dagger(z))^{k_3+1}}{z^{k_3+1-(k_2+k_3)}}e^{tz}dz\right]\notag\\
		& { + \sum_{j=1}^{+\infty}(-1)^j\frac{x^j}{j!}\left[\frac{1}{\pi}\int_{\varepsilon}^{+\infty}\Im\left(\frac{\left(q+\mathfrak{b}\rho e^{i\left(\pi-\frac{\theta}{2}\right)}\right)^j}{\rho}e^{t \rho e^{i\left(\pi-\frac{\theta}{2}\right)}}\right)d\rho+\frac{1}{2\pi i}\int_{\gamma_{\varepsilon,\theta}} \frac{(q+\mathfrak{b}z)^j}{z}e^{tz}dz\right]}
		\label{multinom}
	\end{align}
where in the last step we used the multinomial theorem, for $j \ge 1$, to expand $\left( \phi \left( \rho e^{i \left( \pi-\frac{\theta}{2} \right)} \right) \right)^j=\left( q+\mathfrak{b}\rho e^{i \left( \pi-\frac{\theta}{2} \right)}+\phi^\dagger \left( \rho e^{i \left( \pi-\frac{\theta}{2} \right)} \right) \right)^j$. Incorporating the first summand of \eqref{multinom} into the last summation, we achieve
	\begin{align*}
		G_\Phi(x,t)&=\sum_{j=0}^{+\infty}(-1)^j\frac{x^j}{j!}\left[\frac{1}{\pi}\int_{\varepsilon}^{+\infty}\Im\left(\frac{\left(q+\mathfrak{b}\rho e^{i\left(\pi-\frac{\theta}{2}\right)}\right)^j}{\rho}e^{t \rho e^{i\left(\pi-\frac{\theta}{2}\right)}}\right)d\rho+\frac{1}{2\pi i}\int_{\gamma_{\varepsilon,\theta}} \frac{(q+\mathfrak{b}z)^j}{z}e^{tz}dz\right]\\
		&+\sum_{j=1}^{+\infty}\sum_{k_1+k_2+k_3=j-1}(-1)^j\frac{x^j}{k_1!k_2!(k_3+1)!}q^{k_1}\mathfrak{b}^{k_2}\\
		&\times \left[\frac{1}{\pi}\int_{\varepsilon}^{+\infty}\Im\left(\frac{\left(\Phi^\dagger\left(\rho e^{i\left(\pi-\frac{\theta}{2}\right)}\right)\right)^{k_3+1}}{\rho^{k_3+1-(k_2+k_3)}e^{i(k_3-(k_2+k_3))}}e^{t \rho e^{i\left(\pi-\frac{\theta}{2}\right)}}\right)d\rho+\frac{1}{2\pi i}\int_{\gamma_{\varepsilon,\theta}} \frac{(\Phi^\dagger(z))^{k_3+1}}{z^{k_3+1-(k_2+k_3)}}e^{tz}dz\right]\\
		&=\sum_{j=0}^{+\infty}(-1)^j\frac{x^j}{j!}\left[\frac{1}{\pi}\int_{\varepsilon}^{+\infty}\Im\left(\frac{\left(q+\mathfrak{b}\rho e^{i\left(\pi-\frac{\theta}{2}\right)}\right)^j}{\rho}e^{t \rho e^{i\left(\pi-\frac{\theta}{2}\right)}}\right)d\rho+\frac{1}{2\pi i}\int_{\gamma_{\varepsilon,\theta}} \frac{(q+\mathfrak{b}z)^j}{z}e^{tz}dz\right]\\
		&+\sum_{j=1}^{+\infty}\sum_{k_1+k_2+k_3=j-1}(-1)^j\frac{x^j}{k_1!k_2!(k_3+1)!}q^{k_1}\mathfrak{b}^{k_2}\frac{d^{k_2+k_3}}{dt}\mu^{\ast(k_3+1)}(t)\\
		&{=:S_\varepsilon(x,t;q,\mathfrak{b})+\sum_{j=1}^{+\infty}\sum_{k_1+k_2+k_3=j-1}(-1)^j\frac{x^j}{k_1!k_2!(k_3+1)!}q^{k_1}\mathfrak{b}^{k_2}\frac{d^{k_2+k_3}}{dt}\mu^{\ast(k_3+1)}(t)}.
	\end{align*}
	We now evaluate $S_\varepsilon$. Consider the function $F_j(z;t)=\frac{(q+\mathfrak{b}z)^j}{z}e^{tz}$, that is holomorphic on $\C \setminus \{0\}$, Hermitian and admits a simple pole in $0$ with residue $q^j$. We have that
		\begin{align*}
			&\frac{1}{\pi}\int_{\varepsilon}^{+\infty}\Im\left(\frac{\left(q+\mathfrak{b}\rho e^{i\left(\pi-\frac{\theta}{2}\right)}\right)^j}{\rho}e^{t \rho e^{i\left(\pi-\frac{\theta}{2}\right)}}\right)d\rho+\frac{1}{2\pi i}\int_{\gamma_{\varepsilon,\theta}} \frac{(q+\mathfrak{b}z)^j}{z}e^{tz}dz\\
			&=\frac{1}{2\pi i}\left[\int_{\varepsilon}^{+\infty}F_j\left(\rho e^{i\left(\pi-\frac{\theta}{2}\right)};t\right) e^{i\left(\pi-\frac{\theta}{2}\right)}d\rho-\int_{\varepsilon}^{+\infty}F_j\left(\rho e^{i\left(\pi+\frac{\theta}{2}\right)};t\right) e^{i\left(\pi+\frac{\theta}{2}\right)}d\rho+\int_{\gamma_{\varepsilon,\theta}}F_j(z;t)dz\right].
		\end{align*}
		Taking into account the notation in Figure \ref{fig1}, denote by $-\Gamma_{R,\theta}$ the clockwise oriented circular arc joining $F(R)$ and $C(R)$, $-\ell_2$ the oriented segment joining $D(\varepsilon)$ to $C(R)$ and $-\ell_3$ the oriented segment joining $F(R)$ to $E(\varepsilon)$. Denote by $\mathfrak{D}_{R,\theta}$ the domain whose negatively oriented contour is given by $\gamma_{\varepsilon,\theta}$, $-\ell_2$, $-\Gamma_{R,\theta}$ and $-\ell_3$. Then, since $F_j$ is holomorphic on $\mathfrak{D}_{R,\theta}$, that is a simply connected domain, we have, by Cauchy's theorem
		\begin{equation}\label{eq:DReps}
			\int_{-\partial \mathfrak{D}_{R,\theta}}F_j(z;t)dz=0.
		\end{equation}
		On the other hand, if we denote by $\Gamma_R$ and $\gamma_\varepsilon$ the counterclockwise oriented circles of radius respectively $R$ and $\varepsilon$, we have by Cauchy's residue theorem
		\begin{equation*}
			\frac{1}{2\pi i}\int_{\Gamma_{R}}F_j(z;t)dz=\frac{1}{2\pi i}\int_{\gamma_{\varepsilon}}F_j(z;t)dz=q^j.
		\end{equation*}
		Hence, denoting by $A_{R,\varepsilon}$ the annulus delimited by the images of $\Gamma_R$ and $\gamma_\varepsilon$, we have
		\begin{equation}\label{eq:AReps}
			\int_{-\partial A_{R,\varepsilon}}F(z;t)dz=0=\int_{-\partial \mathfrak{D}_{R,\theta}}F_j(z;t)dz,
		\end{equation}
		where $-\partial A_{R,\varepsilon}$ is the negatively oriented contour of $A_{R,\varepsilon}$. If we use the parametrization of $-\ell_2$ and $\ell_3$ as $z=\rho e^{i\left(\pi\mp \frac{\theta}{2}\right)}$ for $\rho \in (\varepsilon,R)$, \eqref{eq:AReps} implies
		\begin{multline}\label{eq:AReps2}
			\int_{-\Gamma_R}F(z;t)dz+\int_{\gamma_\varepsilon}F(z;t)dz
			=\int_{\varepsilon}^{R}F_j\left(\rho e^{i\left(\pi-\frac{\theta}{2}\right)};t\right) e^{i\left(\pi-\frac{\theta}{2}\right)}d\rho\\-\int_{\varepsilon}^{R}F_j\left(\rho e^{i\left(\pi+\frac{\theta}{2}\right)};t\right) e^{i\left(\pi+\frac{\theta}{2}\right)}d\rho+\int_{\gamma_{\varepsilon,\theta}}F_j(z;t)dz+\int_{-\Gamma_{R,\theta}}F(z;t)dz.
		\end{multline}
		Furthermore, if we denote by $-\Gamma^\dagger_{R,\theta}$ the clockwise oriented circular arc joining $C(R)$ to $F(R)$ it is clear that
		\begin{equation*}
			\int_{-\Gamma_R}F(z;t)dz-\int_{-\Gamma_{R,\theta}}F(z;t)dz=\int_{-\Gamma^{\dagger}_{R,\theta}}F(z;t)dz
		\end{equation*}
		hence \eqref{eq:AReps2} leads to
		\begin{multline}\label{eq:AReps3}
			\int_{-\Gamma^\dagger_{R,\theta}}F(z;t)dz+\int_{\gamma_\varepsilon}F(z;t)dz\\
			=\int_{\varepsilon}^{R}F_j\left(\rho e^{i\left(\pi-\frac{\theta}{2}\right)};t\right) e^{i\left(\pi-\frac{\theta}{2}\right)}d\rho-\int_{\varepsilon}^{R}F_j\left(\rho e^{i\left(\pi+\frac{\theta}{2}\right)};t\right) e^{i\left(\pi+\frac{\theta}{2}\right)}d\rho+\int_{\gamma_{\varepsilon,\theta}}F_j(z;t)dz.
		\end{multline}
		By the estimation lemma we have
		\begin{equation*}
			\left|\int_{-\Gamma^\dagger_{R,\theta}}F(z;t)dz\right| \le \theta R \max_{z \in \Gamma^\dagger_{R,\theta}}|F(z;t)| \le \theta (q+\mathfrak{b}R)^je^{-t\cos\left(\frac{\theta}{2}\right)R}
		\end{equation*}
		hence, taking the limit as $R \to +\infty$ in \eqref{eq:AReps3} we get
		\begin{multline}
			2\pi i q^j=\int_{\gamma_\varepsilon}F(z;t)dz\\
			=\int_{\varepsilon}^{\infty}F_j\left(\rho e^{i\left(\pi-\frac{\theta}{2}\right)};t\right) e^{i\left(\pi-\frac{\theta}{2}\right)}d\rho-\int_{\varepsilon}^{\infty}F_j\left(\rho e^{i\left(\pi+\frac{\theta}{2}\right)};t\right) e^{i\left(\pi+\frac{\theta}{2}\right)}d\rho+\int_{\gamma_{\varepsilon,\theta}}F_j(z;t)dz.
		\end{multline}
		Substituting this value into $S_\varepsilon(x,t;q,\mathfrak{b})$ we get \eqref{eq:seriesdistr}.

	The other two series \eqref{eq:seriessub1} and \eqref{eq:seriessub2} are obtained analogously, once one notices that for any $j \ge 0$
	\begin{equation*}
		\frac{1}{\pi}\int_{\varepsilon}^{+\infty}\Im\left(\left(q+\mathfrak{b}\rho e^{i\left(\pi-\frac{\theta}{2}\right)}\right)^je^{t \rho e^{i\left(\pi-\frac{\theta}{2}\right)}}\right)d\rho+\frac{1}{2\pi i}\int_{\gamma_{\varepsilon,\theta}} (q+\mathfrak{b}z)^je^{tz}dz=\frac{1}{2\pi i}\int_{\gamma_\varepsilon}(q+\mathfrak{b}z)^je^{tz}dz=0,
	\end{equation*}
	since $(q+\mathfrak{b}z)^je^{tz}$ is holomorphic in the disc $\{z \in \C: \ |z|<\varepsilon\}$. Finally, the fact that one can actually exchange the series with the integral is proven exactly as in the proof of Theorem \ref{thm:seriespi}.
\end{proof}

	\section{Examples and Further Discussion}\label{sec:ex}
Here we provide computational examples and further discussion of our results.
\subsection{Examples for the results in Section \ref{subsec:L}}
 We start with examples concerning Section \ref{subsec:L}.

Take a Bernstein function with $q=\mathfrak{b}=0$ and a L\'evy measure with a compactly supported density that is regularly varying at $0$ with index $-1-\alpha$ for some $\alpha \in (0,1)$, as for instance
\begin{equation}\label{eq:examplenocomp}
\nu_{\Phi}(dy)=m(y)\ind{y<\frac12}dy, \quad \, m(y)\sim y^{-\alpha-1}, \mbox{ as }x \to 0.	
\end{equation}
We can opt for general regular variation and $\mathfrak{b},q>0$ but we leave it to the interested reader. 
Then,
\[\Phi'''(x)\sim C_3x^{\alpha-3} \text{ and } -\Phi''(x)\sim C_2x^{\alpha-2}, \mbox{ as }x \to \infty\] 
 for positive constants $C_2,C_3$. Clearly,  \eqref{def:condiB} is satisfied. Also, due to the finite support of $\mu_\phi$ we have that $\abs{\phi''(0^+)}<\infty,\phi'''(0^+)<\infty$ and \eqref{def:condiB'} is valid too. Furthermore, for some $C>0$,
\[x^2\Delta(x)=x^2\int_{0}^{\frac1x}y^2 m(y)dy\sim Cx^\alpha, \ \mbox{ as } \ x \to \infty,\]
and  \eqref{def:condiA} holds with $L=\infty$. It is clear that, actually, assumptions \eqref{def:condiA} and \eqref{def:condiB} hold for any Bernstein function $\Phi$ that is regularly varying at $\infty$ with index $\alpha \in (0,1)$. Going back to the example, let $t(x)/x\downarrow 0$. Using $-\Phi''(x)\sim C_2x^{\alpha-2}$, as $x \to \infty$, we establish, for any $k,l\geq 0$,  as $x\to\infty$,
\begin{equation}\label{asymp1ex}
	\begin{split}
		& \sup_{(t,x) \in \mathbb{D}^\prime}\abs{(-1)^k\frac{\sqrt{2\pi C_2}}{C^{k+1}_0}\frac{\sqrt{x}}{c^{k+l+1-\frac{\alpha}{2}}}e^{-ct+x\Phi(c)}\frac{\partial^k\partial ^l }{\partial x^k\partial t^l}\fP{x,t}-1}=\bo{\sqrt{\frac{\ln\lbrb{x}} {a^{\alpha}_*x}}},
	\end{split}
\end{equation}
where we used Theorem \ref{thm:mainL}  with $\Phi(y)=\phi^{\dagger}(y)\sim C_0 y^{\alpha}$, as $y \to \infty$, and the monotonicity in $c$ of the first asymptotic term in \eqref{asymp}. Finally, employing that $\Phi'(y)\sim C_1y^{\alpha-1}$, as $y \to \infty$, we get from \eqref{eq:a*1} 
\[C_1a^{\alpha-1}_*\sim \frac{t(x)}{x} \quad \mbox{ as }x \to \infty \iff a_* \sim \lbrb{\frac{1}{C_1}\frac{t(x)}{x}}^{\frac{1}{\alpha-1}}, \quad \mbox{ as }x \to \infty,\]
and we can plug in this expression in \eqref{asymp1ex}. The speed of convergence of the first asymptotic term in \eqref{asymp} is hence $(t(x)/x)^{\alpha/(2(1-\alpha))}x^{-\frac12}$ which offers faster decay when $\alpha$ approaches $1$ and  we note that it is faster than the second term of the asymptotic in \eqref{asymp} which in this case  is of order $\bo{x^{-1/2}}$.

We continue the example above with an illustration of Theorem \ref{thm:main1}, for which only \eqref{def:condiA} is required.  Take $t(x)=x$ and then $a_*=a_*(x)=(\phi')^{-1}(1)$. Hence, from \eqref{def:fin} we deduce that the explicit in $x$
\begin{equation*}
	\begin{split}
		&\frac{\partial^k\partial ^l }{\partial x^k\partial t^l}\fP{x,t}=\frac{(-1)^k\Phi^\dagger(a_*)\Phi^{k}(a_*)}{\sqrt{2\pi}a^{1-l}_*\sqrt{-\Phi''(a_*)}}\frac{e^{x\lbrb{a_*-\Phi(a_*)}}}{\sqrt{x}} \lbrb{1+\bo{\sqrt{\frac{\ln(x)}{x}}}}, \quad \mbox{ as }x \to \infty.
	\end{split}
\end{equation*}
Since $x\phi'(x)\leq \phi(x)$, see \eqref{it:ineq} of Lemma \ref{lem:Bern}, we double check that $a_*-\phi\lbrb{a_*}\leq 0$. 

Finally, for this example, consider the case when $\limi{x}t(x)/x=\infty=\phi'(0^+)$. Then as above $a_*\sim C (t(x)/x)^{-\frac{1}{1-\alpha}}$  and $x=\so{t(x)}$, as $x \to \infty$, thus
\[-a^2_*\phi''(a_*)x\sim Ca^{\alpha}_*x\sim C't^{-\frac{\alpha}{1-\alpha}}(x)x^{\frac{1}{1-\alpha}}, \quad \mbox{ as }x \to \infty,\]
which goes to infinity if $t(x)=\so{x^{\frac{1}{\alpha}}}$ and the first requirement of \eqref{eq:addCondi} holds. Also, for any $\delta>0$,
\[\limi{x}e^{-\delta x}t^{-\frac{\alpha}{1-\alpha}}(x)x^{\frac{1}{1-\alpha}}=0,\]
for $t(x)=\so{x^{\frac{1}{\alpha}}}$ and the third condition in \eqref{eq:addCondi} holds. Under the same restrictions $\ln(1/a_*)$ is at most of logarithmic growth and the second imposition of \eqref{eq:addCondi} is valid and Theorem \ref{thm:main2} is applicable.

 Let us discuss the route to the derivation of new fine local estimates in the region of the lower envelope. We use the example above with $\alpha=1/2$. In this case one has to consider 
 \[\Pbb{\sigma(x)\leq c\frac{x^2}{\log_2x}},\]
 where $\log_2x=\log\log x, c>0$, see \cite[Chapter III]{bertoinb}. Set $t(x)=c\frac{x^2}{\log_2x}$. Clearly, $t(x)/x\to \infty$ and from above \eqref{def:condiA},\eqref{def:condiB} and \eqref{def:condiB'} hold. Also, 
 \[\phi'(a_*)=\frac{t(x)}{x} \Rightarrow a_*\sim C\frac{\log^2_2 x}{x^2}, \quad \mbox{ as }x \to \infty.\]
 Furthermore, the first requirement of \eqref{eq:addCondi} is fulfilled since  as $x \to \infty$,
 \[-xa_*\phi''(a_*)\sim Cxa^{-\frac12}_*\sim C\log_2(x)\to \infty. \]
The second and third impositions of \eqref{eq:addCondi} are then obvious and Theorem \ref{thm:mainS} holds true and by virtue of its claims it yields local estimates for the densities and all derivatives of the probabilities above.
 
Assume that  $\abs{\phi''(0^+)}<\infty$ and hence $\phi'(0^+)<\infty$. Let us determine the speed by which $t(x)/x$ may approach $\phi'(0^+)$ so that our results hold. First, we note that since $a_*\to 0$ then, as $x \to \infty$,
\[-a_*\phi''(0^+)\sim \phi'(0)-\phi'(a_*)=\phi'(0)-\frac{t(x)}{x}.\]
From Remark \ref{rem:main2} we have to ensure that  $\limi{x}a_*\sqrt{x}=\infty$. Hence, from the last relation we must have
\[\limi{x}\frac{\sqrt{x}}{x\phi'(0)-t(x)}=0.\]
Thus, $\sqrt{x}=\so{x\phi'(0)-t}$, as $x \to \infty$, or alternatively $t<x\phi'(0)-K\sqrt{x}$, for all $K>0$, and all $x$ large enough. This captures the region below that of the central limit theorem  for the density $g_\phi$, see Theorem \ref{thm:mainS}, and therefore we can approximate with high precision, as $x\to\infty,$ quantities of the type
\[\Pbb{\sigma(x)\in\lbbrbb{a,b}}=\int_a^b g_\phi(x,t)dt.\]
\subsection{Examples for the results in Section \ref{subsec:series}}\label{subsec:exseries}
First we develop some explicit examples for Section \ref{subsec:series}.
It is clear that the representation \eqref{eq:seriesder} provides some completely explicit result if we evaluate the derivatives of the convolution powers $\bar{\mu}_{\Phi}^{\ast n}$. For instance, this is doable for stable subordinators in which case the tail of the L\'evy measure of $\Phi(z)=z^\alpha$ is given by
	\begin{equation*}
		\bar{\mu}_\alpha(t)=\frac{t^{-\alpha}}{\Gamma(1-\alpha)},
	\end{equation*}
	where we use the subscript $\alpha$ instead of $\Phi$ to underline the dependence on the single parameter.
	 For these specific tails, it is well-known that
	\begin{equation*}
		\bar{\mu}_\alpha^{\ast (j+1)}(t)=\frac{t^{j-(j+1)\alpha}}{\Gamma(j+1-(j+1)\alpha)}
	\end{equation*}
	and then
	\begin{equation*}
		\dersup{}{t}{j}\bar{\mu}_\alpha^{\ast  j}(t)=\frac{t^{-j\alpha-1}}{\Gamma(-j\alpha)}=\frac{t^{-j\alpha-1}}{\pi}\sin(\pi j\alpha)\Gamma(1+j\alpha),
	\end{equation*}
	where we have used Euler's reflection formula, provided that $\alpha \not \in \mathbb{Q}$, while for $\alpha \in \mathbb{Q}$ we have to pay attention to the case in which $j\alpha$ is an integer, for which it can be simply proven that $\bar{\mu}_\alpha^{\ast j}(t)$ is a monomial of degree less than $j$ and thus $\dersup{}{t}{j}\bar{\mu}_\alpha^{\ast j}(t)=0$ as expected. For $l=0$ substituting this into \eqref{eq:fractureI} and then into \eqref{eq:seriessub1}, we get that
	\begin{equation}\label{eq:seriesstable}
		g_\alpha(x,t)=\sum_{j=1}^{+\infty}(-1)^{j+1}\frac{x^j}{j!}\frac{t^{-j\alpha-1}}{\pi}\sin(\pi j\alpha)\Gamma(1+j\alpha).
	\end{equation}
	This series expansion is well-known in literature, see e.g. \cite[Equation $(7)$]{PG10}. The same argument can be also adopted to obtain the series representation of $f_\alpha$. Indeed, with the same arguments as before
	\begin{equation*}
		\dersup{}{t}{j}\bar{\mu}_\alpha^{\ast(j+1)}(t)=\frac{t^{-(j+1)\alpha}}{\pi(j+1)\alpha}\sin(\pi\alpha(j+1))\Gamma(\alpha(j+1)+1)
	\end{equation*}
	for any $j \ge 0$ and $\alpha \in (0,1)$. Thus, by using \eqref{eq:seriesder}, we get
	\begin{equation}\label{eq:seriesstable123}
		f_\alpha(x,t)=\sum_{j=0}^{\infty}(-1)^{j}\frac{\Gamma(1+(j+1)\alpha)}{\alpha (j+1)!}\frac{\sin(\pi \alpha(j+1))}{\pi}x^j t^{-\alpha(j+1)}.
	\end{equation}
	Such a series can also be deducted by combining \eqref{eq:seriesstable} with the relation
	\begin{equation*}
		f_\alpha(x,t)=\frac{t}{\alpha}x^{-1-\frac{1}{\alpha}}g_\alpha(1,tx^{-\frac{1}{\alpha}}),
	\end{equation*}
	see \cite{meerstra2} for more details. The series representation \eqref{eq:seriesstable123} has also been obtained by means of a limit argument in \cite[Remark 2.3]{kumar}.
	
	A similar approach can be used to deduce some information on the relativistic (or tempered) stable subordinator, i.e. when $\Phi(z)=(\lambda+z)^\alpha-\lambda^\alpha$. In \cite{kumar} the authors provide both an integral and a series representation for the density $f_{\alpha,\lambda}$ of the inverse tempered stable subordinator. In the proof, they exploit the possibility to extend $\phi$ to the whole complex half-plane $\overline{\C(0,\pi)}$ and then they integrate on a keyhole contour centered in $-\lambda$. This seems to be slightly different from our contour, which is not really a keyhole contour and it is always centered in $0$. However, Proposition \ref{prop:extcont} lets us extend the approach to a full keyhole contour. Furthermore, if we choose $\varepsilon<\lambda$, then for any $j \ge 0$ we get
	\begin{equation*}
		\int_{\gamma_\varepsilon}e^{tz}\frac{(\phi(z))^{j+1}}{z}dz=0,
	\end{equation*}
	since the integrand is holomorphic on the disc $\{z \in \C: \ |z|<\varepsilon\}$. Since $\phi_+(\rho)$ is real for $\rho>-\lambda$, we can rewrite
	\begin{align*}
		\dersup{}{t}{j+1}\bar{\mu}_\Phi^{\ast j}(t)&=\frac{1}{\pi}\int_{\lambda}^{+\infty}\Im\left[\frac{(\Phi_+(-\rho))^{j+1}}{\rho}e^{-t\rho}\right]d\rho=\frac{e^{-t\lambda}}{\pi}\int_{0}^{+\infty}\Im\left[\frac{(\Phi_+(\lambda-\rho))^{j+1}}{\rho+\lambda}e^{-t\rho}\right]d\rho.
	\end{align*}
	This leads to
	\begin{align*}
		f_{\Phi}(x,t)&=\frac{e^{-t\lambda}}{\pi}\sum_{j=0}^{+\infty}\frac{x^j}{j!}(-1)^j\int_{0}^{+\infty}\Im\left[\frac{(\Phi_+(\lambda-\rho))^{j+1}}{\rho+\lambda}e^{-t\rho}\right]d\rho\\
		&=\frac{e^{x\lambda^\alpha-t\lambda}}{\pi}\sum_{j=0}^{+\infty}\frac{x^j}{j!}(-1)^j\int_{0}^{+\infty}\Im\left[\frac{(\Phi_+(\lambda-\rho))^{j+1}}{\rho+\lambda}e^{-x(-\rho)^\alpha-t\rho}\right]d\rho\\
		&=\frac{e^{x\lambda^\alpha-t\lambda}}{\pi}\int_0^{+\infty}\frac{e^{x\rho^\alpha \cos(\alpha \pi)-t\rho}}{\rho +\lambda}[\rho^\alpha \sin(\alpha \pi -x\rho^\alpha \sin(i\alpha \pi))+\lambda^\alpha\cos(\alpha \pi)\sin(x\rho^\alpha \sin (i\alpha \pi))]d\rho,
	\end{align*}
	where the last equality follows by simple algebraic manipulations. It is the integral representation of \cite[Theorem 2.1]{kumar} and thus, arguing as in \cite[Proposition 2.1]{kumar}, we get the series representation
	\begin{equation}\label{seriesinvrel}
		\begin{split}
			f_{\Phi} (x,t) \, = \, \frac{e^{\lambda^\alpha x}}{\pi} \sum_{j=0}^{+\infty}  \frac{(-1)^j x^j}{j!} \lambda^{\alpha(j+1)} &\left[ \Gamma (1+\alpha(j+1)) \Gamma (-\alpha (k+1), \lambda t) \sin ((j+1)\alpha \pi) \right. \\
			& \left. - \Gamma (1+\alpha j) \Gamma (-\alpha j, \lambda t) \sin (j\alpha \pi) \right],
		\end{split}
	\end{equation}
	where $\Gamma(x,y)$ is the upper-incomplete Gamma function.
	
	With the same arguments, we can use Proposition \ref{prop:extcont} to obtain the integral representation of the density of the inverse Gamma subordinator (i.e. the case $\Phi(z)=\log(1+z)$), as in \cite[Proposition 1]{kumar2}.

	In \cite{bur}, the authors studied a special class of Thorin subordinators. Let us consider now an example taken from this paper. Fix $\alpha \in (0,1)$, let us consider the subordinator $\sigma$ whose Laplace exponent is given by $\Phi(z)=\varphi(z)-\varphi(0)-z$, where $\varphi(z)$ is the unique solution of $\varphi(z)-(\varphi(z))^\alpha=z$ for $z \ge 0$. Clearly $\Phi(0)=0$ and then $q=0$. Furthermore, by \cite[Theorem 1]{bur}, we know that $\mathfrak{b}=0$ and, by \cite[Proposition $5$]{bur}, we get that $g_\Phi$ is well-defined. We can rewrite \cite[Equation (21)]{bur} in the following form, for $x,t>0$,
	\begin{equation}\label{eq:seriesgphi}
		g_\Phi(x,t)=\sum_{j=1}^{+\infty}\frac{x^j}{j!}(A_j(t)+B_j(t)+C_j(t)),
	\end{equation}
	where
	\begin{align*}
		A_j(t)&=j\sum_{n=1}^{+\infty}(-1)^{n+1}\frac{\Gamma(1+n\alpha)}{n!}\sin(\pi n \alpha)t^{-n\alpha+n}\\
		B_j(t)&=j(j-1)\sum_{n=1}^{+\infty}(-1)^{n+1}\frac{(n+1)\Gamma(1+n\alpha)}{n!}\sin(\pi n \alpha)t^{-n\alpha+n-1}\\
		C_j(t)&=\begin{cases}
			0, & j=1,2,\\	
			\displaystyle \sum_{k=0}^{j}\frac{j!}{(j-k)!}\sum_{n=k+1}^{+\infty}(-1)^{n+1}\binom{n+1}{k+2}\frac{\Gamma(1+n\alpha)}{n!}\sin(\pi n \alpha)t^{-n\alpha+n-2-k}, & j \ge 3.
		\end{cases}
	\end{align*}
	In particular, comparing \eqref{eq:seriessub1} and \eqref{eq:seriesgphi}, we get, by means of \eqref{eq:fractureI},
	\begin{equation*}
		\frac{d^j}{d t^j}\bar{\mu}_\Phi^{\ast j}(t)=A_j(t)+B_j(t)+C_j(t), \quad t>0, \ j \ge 1.
	\end{equation*}
	For $j=1$, we get again \cite[Equation (22)]{bur}. Moreover, setting
	\begin{align*}
		\overline{A}_j(t)&=j\sum_{n=1}^{+\infty}(-1)^{n}\frac{\Gamma(1+n\alpha)}{n!(n-n\alpha+1)}\sin(\pi n \alpha)t^{-n\alpha+n+1}\\
		\overline{B}_j(t)&=j(j-1)\sum_{n=1}^{+\infty}(-1)^{n}\frac{(n+1)\Gamma(1+n\alpha)}{n!(n-n\alpha)}\sin(\pi n \alpha)t^{-n\alpha+n}\\
		\overline{C}_j(t)&=\begin{cases}
			0, & j=1,2,\\	
			\displaystyle \sum_{k=0}^{j}\frac{j!}{(j-k)!}\sum_{n=k+1}^{+\infty}(-1)^{n}\binom{n+1}{k+2}\frac{\Gamma(1+n\alpha)}{n!(n-n\alpha-1-k)}\sin(\pi n \alpha)t^{-n\alpha+n-1-k}, & j \ge 3,
		\end{cases}
	\end{align*}
	we have
	\begin{equation*}
		\frac{d^j}{d t^j}\bar{\mu}_\Phi^{\ast j+1}(t)=\bar{A}_{j+1}(t)+\bar{B}_{j+1}(t)+\bar{C}_{j+1}(t), \quad t>0, \ j \ge 0,
	\end{equation*}
	and then we achieve the density of the inverse subordinator $L_\Phi$ by means of \eqref{eq:seriesder}.
	
	Finally, we show a specific application of Theorem \ref{behavatzero} to the context of time-nonlocal equations. For example, in \cite{moving} the following assumption is used to prove the main result:
	\begin{itemize}
		\item	For any $0<a \le b$ there exists $\delta_{a,b}>0$ and a function $F_{a,b}:(0,\delta_{a,b}) \to (0,+\infty)$ such that 
		\begin{equation*}
			\int_0^{\delta_{a,b}} x^{-\frac{1}{2}}F_{a,b}(x)dx<\infty
		\end{equation*} 
		and, for any $x \in (0,\delta_{a,b})$ and $t \in [a,b]$,
		\begin{equation*}
			\left|\pd{f_{\Phi}}{x}(x,t)\right| \le F_{a,b}(x).
		\end{equation*}
	\end{itemize}
	It is clear that we can Theorem \ref{behavatzero} for $n=0$ and since the remainder is locally uniform in $t$, the latter condition is verified with $F_{a,b}$ being independent of $x$ if $\phi$ satisfies \eqref{eq:extensionA3} and \eqref{eq:uniformlimcond}. In particular, the latter holds whenever (but not exclusively if) $\phi$ is a complete Bernstein function, as discussed in Section \ref{discussionassumptions}.

	\section*{Acknowledgements}
	The authors are gratefull the anonymous referees whose remarks and suggestions have considerably improved a previous draft of the paper.

	The author Giacomo Ascione has been partially supported by the MIUR-PRIN 2022 project “Anomalous Phenomena on Regular and Irregular Domains: Approximating Complexity for the Applied Sciences”, no. 2022XZSAFN.
	Mladen Savov acknowledges - “This study is financed by the European Union-NextGenerationEU, through the National Recovery and Resilience Plan of the Republic of Bulgaria, project No BG-RRP-2.004-0008".
	The author Bruno Toaldo has been partially supported by the MIUR-PRIN 2022 project “Non-Markovian dynamics and non-local equations”, no. 202277N5H9.
	The authors Giacomo Ascione and Bruno Toaldo are partially supported by Gruppo Nazionale per l’Analisi Matematica, la Probabilit\`{a} e le loro Applicazioni (GNAMPA-INdAM). 
	
\vspace{1cm}

\vspace{1cm}

\end{document}